\theoremstyle{plain}
 \newtheorem{theorem}{Theorem}[section]
 \newtheorem{prop}[theorem]{Proposition}
 \newtheorem{lemma}[theorem]{Lemma}
 \newtheorem{obs}[theorem]{Observation}
 \newtheorem{cor}[theorem]{Corollary}
 \newtheorem{fact}[theorem]{Fact}
 \newtheorem{que}[theorem]{Question}
 \newtheorem{notation}[theorem]{Notation}
\theoremstyle{definition}
 \newtheorem{exa}[theorem]{Example}
 \newtheorem{rem}[theorem]{Remark}
 \newtheorem{defin}[theorem]{Definition}
\numberwithin{equation}{section}
\renewcommand{\leq}{\leqslant}
\renewcommand{\geq}{\geqslant}
\renewcommand{\setminus}{\smallsetminus}
\newfont{\TITf}{cmssdc10 scaled 1440}
\newcommand{\bb}[1]{\mathbb{#1}}
\renewcommand{\rm}[1]{\mathrm{#1}}
\newcommand{\bA}{\mathbf{A}}
\newcommand{\cA}{\mathcal{A}}
\newcommand{\bB}{\mathbf{B}}
\newcommand{\cB}{\mathcal{B}}
\newcommand{\bC}{\mathbf{C}}
\newcommand{\cC}{\mathcal{C}}
\newcommand{\bD}{\mathbf{D}}
\newcommand{\cD}{\mathcal{D}}
\newcommand{\bE}{\mathbf{E}}
\newcommand{\bF}{\mathbf{F}}
\newcommand{\cF}{\mathcal{F}}
\newcommand{\bK}{\mathbf{K}}
\newcommand{\cK}{\mathcal{K}}
\newcommand{\bbL}{\mathbb{L}}
\newcommand{\cL}{\mathcal{L}}
\newcommand{\bM}{\mathbf{M}}
\newcommand{\bbN}{\mathbb{N}}
\newcommand{\bbQ}{\mathbb{Q}}
\newcommand{\bbR}{\mathbb{R}}
\newcommand{\bS}{\mathbf{S}}
\newcommand{\bbS}{\mathbb{S}}
\newcommand{\bT}{\mathbf{T}}
\newcommand{\bbT}{\mathbb{T}}
\newcommand{\sft}{\mathsf{t}}
\newcommand{\bU}{\mathbf{U}}
\newcommand{\bbU}{\mathbb{U}}
\newcommand{\sfu}{\mathsf{u}}
\newcommand{\ol}{\overline}
\newcommand{\la}{\langle}
\newcommand{\ra}{\rangle}
\newcommand{\flim}{\mathrm{Flim}}
\newcommand{\age}{\mathrm{Age}}
\newcommand{\fin}[1]{[#1]^{{<}\omega}}
\newcommand{\fr}{Fra\"iss\'e }
\renewcommand{\phi}{\varphi}
\newcommand{\emb}{\mathrm{Emb}}
\newcommand{\aut}{\mathrm{Aut}}
\newcommand{\dom}{\mathrm{dom}}
\newcommand{\im}{\mathrm{Im}}
\newcommand{\circdots}{\circ\cdots\circ}
\newcommand{\cdnd}{\mathrm{CdNd}}
\newcommand{\is}{\mathrm{IS}}
\newcommand{\str}{\mathbf{Str}}
\newcommand{\ct}{\mathrm{CT}}
\newcommand{\aemb}{\mathrm{AEmb}}
\newcommand{\lex}{\prec_{\rm{lex}}}
\newcommand{\lexeq}{\preceq_{\rm{lex}}}
\def\-{\raisebox{.30pt}{-}}
\title[A survey on big Ramsey structures]{}
\author{Jan Hubi\v{c}ka and Andy Zucker}
\begin{document}
\setcounter{page}{1}

\vspace*{40mm}

\thispagestyle{empty}

{
\TITf\setlength{\parskip}{\smallskipamount}

\begin{center}
Jan Hubi\v{c}ka and Andy Zucker

\bigskip\bigskip\bigskip

{A survey on big Ramsey structures}

\end{center}
}

\vspace{6em}{\leftskip3em\rightskip3em

\emph{Abstract}.
    In recent years, there has been much progress in the field of structural Ramsey theory, in particular in the study of big Ramsey degrees. In all known examples of infinite structures with finite big Ramsey degrees, there is in fact a single expansion of the structure, called a big Ramsey structure, which correctly encodes the exact big Ramsey degrees of every finite substructure simultaneously. The first half of the article collects facts about this phenomenon that have appeared in the literature into a single cohesive framework, thus offering a conceptual survey of big Ramsey structures. We present some original results indicating that the standard methods of proving finite big Ramsey degrees automatically yield big Ramsey structures, often with desirable extra properties. The second half of the article is a survey in the more traditional sense, discussing numerous examples from the literature and showing how they fit into our framework. We also present some general results on how big Ramsey degrees are affected by expanding structures with unary functions. 

\bigskip\emph{Mathematics Subject Classification} (2010):  
Primary: 05-02, 05D10, 03E02.


\bigskip\emph{Keywords}: big Ramsey degrees, big Ramsey structures}

\newpage\thispagestyle{empty}
\maketitle
\tableofcontents

\section{Introduction}
\label{Sec:Intro}

We use standard set-theoretic notation. We identify a non-negative integer $k$ with the set $\{0,\ldots,k-1\}$, though we often write the latter for emphasis. Given sets $X$ and $Y$, a function $f\colon X\to Y$, and $S\subseteq X$, we write $f[S] = \{f(s): s\in S\}$. Given a set $X$ and cardinal $k$, we write $\binom{X}{k} = \{Y\subseteq X: |Y| = k\}$. 

The infinite Ramsey theorem \cite{Ramsey} states that for any $0< k, r< \omega$ and any coloring $\gamma\colon \binom{\omega}{k}\to r$, then there is $X\in \binom{\omega}{\omega}$ with $\gamma$ constant on $\binom{X}{k}$. Upon attempting to generalize this result to other countable, first-order structures, the situation becomes much more interesting. For instance, consider $\bbQ$ viewed as a linearly ordered set; a subset $X\subseteq \bbQ$ is \emph{non-scattered} if there is some order-preserving injection from $\bbQ$ into $X$. Sierpi\'nski in \cite{Sierpinski} constructed a coloring $\gamma_2\colon \binom{\bbQ}{2}\to 2$ such that whenever $X\subseteq \bbQ$ is non-scattered, then $\gamma_2[\binom{X}{2}] = 2$. Yet several decades later, Galvin \cite{Galvin} proved that this was worst possible; for any $r< \omega$ and $\gamma\colon \binom{\bbQ}{2}\to r$, there is a non-scattered $X\subseteq \bbQ$ with $|\gamma[\binom{X}{2}]| \leq 2$. By unpublished work of Laver (see \cite{Stevo_book}) and the thesis of Devlin \cite{Devlin}, a similar phenomenon happens for every $k$ -- there is a number $r_k< \omega$ such that both of the following happen:
    \begin{itemize}
        \item 
        There is $\gamma_k\colon \binom{\bbQ}{k}\to r_k$ such that whenever $X\subseteq \bbQ$ is non-scattered, then $\gamma_k[\binom{X}{k}] = r_k$.
        \item 
        For any $r< \omega$ and $\gamma\colon \binom{\bbQ}{k}\to r$, there is a non-scattered $X\subseteq \bbQ$ with $|\gamma[\binom{X}{k}]|\leq r_k$. 
    \end{itemize}
    Devlin actually shows something more; the colorings $\{\gamma_k: 2\leq k< \omega\}$ can actually be built simultaneously in a coherent fashion. Equivalently, there is an \emph{expansion} of the rational linear order such that the map sending a $k$-tuple from $\bbQ$ to the induced expansion of it is a valid choice of $\gamma_k$ as above.

    The number $r_k$ is called the \emph{big Ramsey degree} of the $k$-element linear order in $\bbQ$. In a similar fashion, one can define the big Ramsey degree of any finite substructure of an infinite structure (Definition~\ref{Def:BRD}) and ask which infinite structures have finite big Ramsey degrees. It so happens that in all known examples of infinite structures with finite big Ramsey degrees, we can in fact find a single expansion of the infinite structure which correctly encodes the exact big Ramsey degrees of every finite substructure simultaneously. Observing this, and motivated by questions in topological dynamics posed in \cite{KPT}, the second author in \cite{Zucker_BR_Dynamics} defined the notion of a \emph{big Ramsey structure}, an expansion of a given infinite structure which precisely encodes big Ramsey degrees. Various recent works \cite{BCDHKVZ, BCDHKVZ_poset, CDP_SDAP1, CDP_SDAP2, DZ} provide a wealth of new examples of big Ramsey structures and isolate extra desirable properties they might have, for example being \emph{recurrent} (Definition~\ref{Def:Props_of_expansions}). While a number of basic lemmas regarding big Ramsey structures appear in these works, the assumptions stated therein are always tailored to the specific situation at hand.

    The first half of this article collects the various properties of big Ramsey structures that have been considered in the literature and presents them in a single abstract, cohesive framework. In so doing, we are able to isolate exactly which assumptions are needed for various propositions to hold. In particular, while big Ramsey structures were first studied in the case that the un-expanded structure is a \emph{\fr structure} (Section~\ref{Sec:Background_BRD}), the definition was generalized in \cite{ACDMP} to arbitrary infinite structures, and many of the basic properties go through after dropping the \fr assumption, or even countability. In general, even if one is primarily interested in the big Ramsey properties of \fr structures, it becomes necessary to investigate structures which are not $\omega$-homogeneous. For instance, most \emph{big Ramsey structures} (Definition~\ref{Def:BRS}) cannot be $\omega$-homogeneous (Proposition~7.19 of \cite{Zucker_BR_Dynamics}). Furthermore, various common steps in the proofs of Ramsey theorems, such as adding a linear order in order type $\omega$, destroy $\omega$-homogeneity. As an application of our framework, we prove some original results, Theorems~\ref{Thm:Weak_Biembeddability} and \ref{Thm:Implies_Finite_Recurrent_BRS}, which show that the standard approaches to proving finite big Ramsey degree results always yield recurrent big Ramsey structures. The second half of this article gives a brief survey (we point to \cite{Dobrinen_ICM} for a more general survey on big Ramsey degrees), giving accounts of various examples of big Ramsey structures that have appeared in the literature and shows how applications of Theorems~\ref{Thm:Weak_Biembeddability} and \ref{Thm:Implies_Finite_Recurrent_BRS} can be used to derive the key features of these examples. In particular, one such application is a new elementary proof of a theorem of Laflamme, Sauer, and Vuksanovic \cite{Laflamme_S_V} used in characterising the big Ramsey degrees of the Rado graph.

\section{Background on big Ramsey degrees}
\label{Sec:Background_BRD}
 
	All structures considered in this paper, with the exception of Section~\ref{Sec:Functions}, are relational. \textbf{We fix once and for all} a relational language $\bbL$, a set of relation symbols $R$ each equipped with an \emph{arity} $0< n_R < \omega$. All languages discussed will be subsets of $\bbL$, and will typically be denoted by $\cL$, $\cL^*$, etc. When $\cL\subseteq \bbL$ only consists of unary and binary relations, we simply call $\cL$ \emph{binary}. An \emph{$\bbL$-structure} $\bA$ (or, since $\bbL$ is fixed, just \emph{structure}) is a set $A$ (the \emph{universe} or \emph{underlying set} of $\bA$) along with a distinguished subset $R^{\bA}\subseteq A^{n}$ for each $R\in \bbL$ of arity $n$. For $R\in \bbL$ of arity $1$ (i.e.\ \emph{unary} relation symbols), we can also write $R(\bA)\subseteq A$ in place of $R^\bA$. Unless indicated otherwise, we typically denote $\bbL$-structures in bold letters (possibly with other decoration) and use the un-bolded letter to denote the underlying set, i.e.\ $A, B, C$ are the underlying sets of $\bA$, $\bB^*$, $\bC'$, etc. A structure $\bA$ is \emph{finite}, \emph{countable}, \emph{countably infinite}, etc.\ iff $A$ is, and $\bA$ is \emph{enumerated} if $A = |A|$. Given a structure $\bA$, we let $\cL_\bA = \{R\in \bbL: R^\bA \neq \emptyset\}$, and given a class $\cK$ of structures, we let $\cL_\cK = \bigcup_{\bA\in \cK} \cL_\bA$. 
 
    In what follows, $\bA, \bB, \bK$, etc.\ denote structures. An \emph{embedding} $f\colon \bA\to \bB$ is an injection from $A$ to $B$ such that for every $R\in \bbL$ of arity $n$ and every $(a_0,\ldots,a_{n-1})\in A^n$, we have $(a_0,\ldots,a_{n-1})\in R^\bA$ iff $(f(a_0),\ldots,f(a_{n-1}))\in R^\bB$. Write $\emb(\bA, \bB)$ for the set of embeddings of $\bA$ into $\bB$; if $\bA = \bB$, we simply write $\emb(\bA)$; note that $\emb(\bA)$ is a monoid under composition. When $\bA$ is infinite, we typically equip $\emb(\bA)$ with the topology of pointwise convergence. We write $\aut(\bA)\subseteq \emb(\bA)$ for the bijective members of $\emb(\bA)$; this is the \emph{autmorphism group} of $\bA$. We say $\bA$ is a \emph{substructure} of $\bB$ if $A\subseteq B$ and the inclusion map is an embedding of $\bA$ into $\bB$. A \emph{copy} of $\bA$ in $\bB$ is the image of an embedding of $\bA$ into $\bB$, and we write $\binom{\bB}{\bA}$ for the set of copies of $\bA$ in $\bB$. We write $\bA\leq \bB$ iff $\emb(\bA, \bB)\neq\emptyset$ iff $\binom{\bB}{\bA}\neq \emptyset$. We say $\bA$ and $\bB$ are \emph{bi-embeddable} if both $\bA\leq \bB$ and $\bB\leq \bA$. We write $\age(\bK) = \{\bA: |A|< \omega \text{ and } \bA\leq \bK\}$.   

    A \emph{\fr class} of structures is a class $\cK$ of finite structures which is closed under isomorphism, countable up to isomorphism, contains arbitrarily large finite structures, and satisfies the following three key properties.
	\begin{itemize}
		\item 
		$\cK$ has the \emph{hereditary property} (HP): Whenever $\bB\in \cK$ and $\bA\leq \bB$, then $\bA\in \cK$.
		\item 
		$\cK$ has the \emph{joint embedding property} (JEP): Whenever $\bA, \bB \in \cK$, there is $\bC\in \cK$ with both $\bA\leq \bC$ and $\bB\leq \bC$.
		\item 
		$\cK$ has the \emph{amalgamation property} (AP): Whenever $\bA, \bB, \bC\in \cK$, $f\in \emb(\bA, \bB)$, and $g\in \emb(\bA, \bC)$, there are $\bD\in \cK$, $r\in \emb(\bB, \bD)$, and $s\in \emb(\bC, \bD)$ with $r\circ f = s\circ g$.
	\end{itemize}
	\fr \cite{fraisse_1954} proves that given a \fr class $\cK$, there is up to isomorphism a unique countably infinite structure $\bK$ with $\age(\bK) = \cK$ and with $\bK$ \emph{$\omega$-homogeneous}, i.e.\ for any finite $\bA\subseteq \bK$ and any $f\in \emb(\bA, \bK)$, there is $g\in \aut(\bK)$ with $g|_\bA = f$. This unique $\bK$ is called the \emph{\fr limit} of $\cK$ and is sometimes written as $\flim(\cK)$. Conversely, whenever $\bK$ is countably infinite and $\omega$-homogeneous (i.e.\ a \emph{\fr structure}), $\age(\bK)$ is a \fr class. 
 
	Given structures $\bA\leq \bB\leq \bC$ and positive integers $t< r$, we write
	$$\bC\longrightarrow (\bB)^\bA_{r, t}$$ 
	if for any $\chi\colon \emb(\bA, \bC)\to r$, there is $g\in \emb(\bB, \bC)$ with $|\chi[g\circ \emb(\bA, \bB)]|\leq t$; when $t = 1$, we omit the subscript. Historically, what was first considered was the above definitions, but coloring copies instead of embeddings. One can show (see Section 4 of \cite{Zucker_Metr_UMF}) that if $\bA\leq \bB\leq \bC$ are structures with $\aut(\bA)$ finite and $t< \omega$, then we have
    $$\forall r< \omega \left(\bC\xrightarrow{\mathrm{copy}} (\bB)^\bA_{r, t}\right) \Leftrightarrow \forall r< \omega \left(\bC\longrightarrow (\bB)^\bA_{r, t{\cdot}|\aut(\bA)|}\right).$$

    While the following concept is implicit in a number of earlier works \cite{Sierpinski, Galvin, Devlin, PS_1996, Sauer_directed_graphs}, the following definition was first isolated in \cite{KPT}.
	
	\begin{defin}  
    \label{Def:BRD}
		Let $\bK$ be an infinite structure,  and let $\bA\in \age(\bK)$. The \emph{big Ramsey degree} of $\bA$ in $\bK$, denoted $\rm{BRD}(\bA, \bK)$, is the least $t< \omega$ such that $\forall r< \omega\left(\bK\to (\bK)^\bA_{r, t}\right)$. By a standard color-fusing argument, this holds iff $\bK\to (\bK)^\bA_{t+1, t}$. If there is no $t< \omega$ for which this holds, we write $\rm{BRD}(\bA, \bK) = \infty$. We say that $\bK$ has \emph{finite big Ramsey degrees} if $\rm{BRD}(\bA, \bK)< \infty$ for every $\bA\in \age(\bK)$. We say that $\bA\in \age(\bK)$ is a \emph{big Ramsey object} of $\bK$ if $\rm{BRD}(\bA, \bK) = 1$. Let $\rm{BRO}(\bK)\subseteq \age(\bK)$ denote the class of big Ramsey objects of $\bK$. We say that $\bK$ \emph{satisfies the infinite Ramsey theorem} (IRT) if $\rm{BRO}(\bK) = \age(\bK)$. 

        Given a \fr class $\cK$ with limit $\bK$, we say that $\cK$ has \emph{finite big Ramsey degrees} if $\rm{BRD}(\bA, \bK)$ is finite for every $\bA\in \cK$, and we can write $\rm{BRD}(\bA, \cK)$ and $\rm{BRD}(\bA, \bK)$ interchangeably.
	\end{defin} 

\begin{rem}
    In many sources, including \cite{KPT}, $\rm{BRD}(\bA, \cK)$ is denoted by $T_\cK(\bA)$. Following \cite{BCDHKVZ}, we choose to change the notation due to the number of tree-like objects we will introduce using the letter T.
\end{rem}    

Thus the original infinite Ramsey theorem \cite{Ramsey} is equivalent to the statement that the structure $\la \omega, <\ra$ satisfies IRT. We note that by a theorem of Hjorth \cite{Hjorth}, whenever $\bK$ is a \fr structure with $\aut(\bK)$ non-trivial, then $\bK$ does not satisfy IRT.

It is natural to ask for big Ramsey degrees to be monotone. Proposition~\ref{Prop:Large_image_monotone_BRD} gives a natural extra assumption ensuring that this is the case.

\begin{defin}
    \label{Def:Unavoidable}
    Given an infinite structure $\bK$, $\bA\in \age(\bK)$, $S\subseteq \emb(\bA, \bK)$, and $\eta\in \emb(\bK)$, we write $\eta^{-1}(S):= \{f\in \emb(\bA, \bK): \eta\circ f\in S\}$. We call $S\subseteq \emb(\bA, \bK)$ \emph{large} if there is $\eta\in \emb(\bK)$ with $\eta^{-1}(S) = \emb(\bA, \bK)$. We call $S\subseteq \emb(\bA, \bK)$ \emph{unavoidable} (in some references \emph{persistent})  if $\emb(\bA, \bK)\setminus S$ is not large. We say that a coloring of $\emb(\bA, \bK)$ is \emph{unavoidable} if every color class is unavoidable. An \emph{avoidable} subset or coloring is one which is not unavoidable.
\end{defin}

\begin{prop}
    \label{Prop:Large_image_monotone_BRD}
    If $\bK$ is an infinite structure, $\bA\leq \bB\in \age(\bK)$, and there is $f\in \emb(\bA, \bB)$ so that $\emb(\bB, \bK)\circ f\subseteq \emb(\bA, \bK)$ is large, then $\rm{BRD}(\bA, \bK)\leq \rm{BRD}(\bB, \bK)$. 
\end{prop}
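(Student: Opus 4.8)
The plan is to unwind the definition of $\rm{BRD}$ and reduce any coloring of $\emb(\bA,\bK)$ to a coloring of the larger set $\emb(\bB,\bK)$. If $\rm{BRD}(\bB,\bK)=\infty$ the desired inequality is vacuous, so I would put $t:=\rm{BRD}(\bB,\bK)<\omega$ and fix $f\in\emb(\bA,\bB)$ with $\emb(\bB,\bK)\circ f$ (automatically a subset of $\emb(\bA,\bK)$) large. It then suffices to verify $\bK\to(\bK)^\bA_{r,t}$ for every $r<\omega$; when $r\le t$ this is immediate since the identity embedding of $\bK$ witnesses it, so I would fix $r>t$ and an arbitrary coloring $\chi\colon\emb(\bA,\bK)\to r$.

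The first key step is to push $\chi$ forward along $f$: since $h\circ f\in\emb(\bA,\bK)$ for every $h\in\emb(\bB,\bK)$, the assignment $\chi'(h):=\chi(h\circ f)$ is a coloring $\chi'\colon\emb(\bB,\bK)\to r$. Because $t=\rm{BRD}(\bB,\bK)$ and $t<r$, I may apply $\bK\to(\bK)^\bB_{r,t}$ to $\chi'$ and obtain some $g_0\in\emb(\bK)$ with $|\chi'[g_0\circ\emb(\bB,\bK)]|\le t$. By the definition of $\chi'$ this is exactly the statement that $\chi$ takes at most $t$ distinct values on the set $g_0\circ\emb(\bB,\bK)\circ f$.

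The second key step is to use the largeness hypothesis to enlarge $\emb(\bB,\bK)\circ f$ to all of $\emb(\bA,\bK)$ at the cost of a further self-embedding. By Definition~\ref{Def:Unavoidable} there is $\eta\in\emb(\bK)$ with $\eta^{-1}(\emb(\bB,\bK)\circ f)=\emb(\bA,\bK)$; unpacking this, $\eta\circ k\in\emb(\bB,\bK)\circ f$ for every $k\in\emb(\bA,\bK)$, i.e.\ $\eta\circ\emb(\bA,\bK)\subseteq\emb(\bB,\bK)\circ f$. Setting $g:=g_0\circ\eta\in\emb(\bK)$, I then get $g\circ\emb(\bA,\bK)\subseteq g_0\circ\emb(\bB,\bK)\circ f$, so $\chi[g\circ\emb(\bA,\bK)]$ has at most $t$ elements. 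As $\chi$ and $r$ were arbitrary, this gives $\rm{BRD}(\bA,\bK)\le t=\rm{BRD}(\bB,\bK)$.

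I do not anticipate a genuine obstacle; the whole argument is bookkeeping with compositions, and the one point requiring care is keeping track of sides — largeness provides an $\eta$ that one precomposes on the right to land inside $\emb(\bB,\bK)\circ f$, whereas the self-embedding $g_0$ produced by the Ramsey statement acts on the left, and the two combine without interference into $g=g_0\circ\eta$. I would also note in passing that the argument never uses amalgamation, the \fr property, or countability of $\bK$.
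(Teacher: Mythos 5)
Your argument is correct and is essentially the paper's own proof: both push the coloring forward along $f$ to a coloring of $\emb(\bB,\bK)$, apply $\rm{BRD}(\bB,\bK)$ to get a self-embedding $g_0$ controlling the colors on $g_0\circ\emb(\bB,\bK)\circ f$, and then use largeness to find $\eta$ with $\eta\circ\emb(\bA,\bK)\subseteq\emb(\bB,\bK)\circ f$, composing to $g_0\circ\eta$. The only cosmetic difference is that you handle the $r\le t$ case and the $\rm{BRD}(\bB,\bK)=\infty$ case explicitly, which the paper leaves implicit.
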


\begin{proof}
    Suppose $\rm{BRD}(\bB, \bK) = \ell <\infty$. Fix a finite coloring $\gamma\colon \emb(\bA, \bK)\to r$. Pick some $f\in \emb(\bA, \bB)$ and form the coloring $\gamma{\cdot} \hat{f}: \emb(\bB, \bK)\to r$ given by $\gamma{\cdot}\hat{f}(x) = \gamma(x\circ f)$. We may find $\eta\in \emb(\bK)$ so that 
    \begin{align*}
        &\left\vert \gamma{\cdot}\hat{f}\left[\eta\circ \emb\left(\bB, \bK\right)\right]\right\vert\leq \ell\\
        \Leftrightarrow  &\Big\vert\gamma\left[\eta\circ \emb\left(\bB, \bK\right)\circ f\right]\Big\vert\leq \ell.
    \end{align*}
    As $\emb(\bB, \bK)\circ f\subseteq \emb(\bA, \bK)$ is large by assumption, find $\theta\in \emb(\bK)$ with $\theta\circ \emb(\bA, \bK)\subseteq \emb(\bB, \bK)\circ f$. Hence $\left\vert\gamma\left[\eta\circ \theta\circ \emb\left(\bA, \bK\right)\right]\right\vert\leq \ell$.   
\end{proof}

Note that if $\bK$ is a countable structure, then $\bK$ is \fr iff for any $\bA\leq \bB\in \age(\bK)$ and $f\in \emb(\bA, \bB)$, we have $\emb(\bB, \bK)\circ f = \emb(\bA, \bK)$. Thus Proposition~\ref{Prop:Large_image_monotone_BRD} recovers the result from \cite{Zucker_BR_Dynamics} that BRDs in \fr structures are monotone.  Proposition~\ref{Prop:Large_image_monotone_BRD} is also implicitly used by Ma\v{s}ulovi\'c \cite{Masulovic_2020, Masulovic_2023} in some situations where $\bK$ is not $\omega$-homogeneous.

\section{Expansions and big Ramsey structures}
\label{Sec:Expansions}

Our next goal is to define reducts, expansions, and big Ramsey structures (Definitions~\ref{Def:Expansion} and \ref{Def:BRS}). Morally speaking, a reduct of a structure $\bK$ is obtained by forgetting some of the symbols in $\cL_\bK$, and an expansion is obtained by adding some interpretations of new relation symbols on top of $\bK$. We mention that our definition is a bit more strict than what some references allow. For instance, some references allow one to form a ``reduct'' by first adding some relations which are definable from $\bM$, then deleting some other relations; an example of this is forming a two-graph from a graph~\cite{Thomas1991}.

It is quite natural to consider adding relations to structures when considering colorings of embeddings. Given a structure $\bK$, $\bA\in \age(\bK)$, and a coloring $\gamma\colon \emb(\bA, \bK)\to r$, we can encode this coloring by adding $r$ new $|A|$-ary relations on top of $\bK$. In principle, to encode several colorings of $\emb(\bA, \bK)$ for varying $\bA\in \age(\bK)$, one would need to form several different expansions. The difficulty is this: Suppose $\bA\leq \bB\in \age(\bK)$ and $\gamma_\bA, \gamma_\bB$ are finite colorings of $\emb(\bA, \bK)$ and $\emb(\bB, \bK)$, respectively. If we attempt to na\"ively add both of the corresponding expansions to $\bK$ simultaneously, obtaining some expansion $\bK^*$, then the $\gamma_\bB$-color of some $g\in \emb(\bB, \bK)$ might no longer tell us exactly how $\bK^*$ looks on $\im(g)$, since $\bK^*|_{\im(g)}$ will also tell us information about $\gamma_\bA$. In situations where $\bK$ admits a big Ramsey structure, we will be able to encode several colorings witnessing lower bounds for BRDs simultaneously using a single expansion.

	\begin{notation}
		If $\bK$ is a structure, $S$ is a set, and $\eta\colon S\to K$ is an injection, then $\bK{\cdot}\eta$ denotes the unique structure on underlying set $S$ such that $\eta\in \emb(\bK{\cdot}\eta, \bK)$.

        If additionally $\bA\leq \bK$ and $\gamma\colon \emb(\bA, \bK)\to r$ is a coloring, we let $\gamma{\cdot}\eta\colon \emb(\bA,\allowbreak \bK{\cdot}\eta)$ be defined via $(\gamma{\cdot}\eta)(f) = \gamma(\eta\circ f)$.
	\end{notation}
	
	\begin{defin}
		\label{Def:Expansion}
		Given $\cL\subseteq \bbL$ and a structure $\bK^*$, the structure $\bK^*|_\cL$ is the structure with underlying set $K$ such that given $R\in \bbL$, we have 
		\begin{align*}
		R^{\bK^*|_\cL} = \begin{cases}
			\emptyset &\quad \text{if } R\not\in \cL\\
			R^{\bK^*} &\quad \text{if } R\in \cL.
		\end{cases}
	\end{align*}
		Given a pair of structures $\bK^*$ and $\bK$, both on underlying set $K$, we call $\bK$ a \emph{reduct of $\bK^*$} or $\bK^*$ an \emph{expansion of $\bK$} if $\bK^*|_{\cL_\bK} = \bK$. The notation $\bK^*/\bK$ indicates that $\bK^*$ is an expansion of $\bK$, and we call $\bK^*/\bK$ an expansion.

		Given an expansion $\bK^*/\bK$ and $\bA\leq \bK$, we write $\bK^*(\bA, \bK) = \{\bK^*{\cdot}f: f\in \emb(\bA, \bK)\}$. Thus $\bK^*(\bA, \bK)$ is the set of expansions of $\bA$ which embed into $\bK^*$. If $I$ is a set of structures all of which embed into $\bK$, we write $\bK^*(I, \bK):= \bigcup_{\bA\in I} \bK^*(\bA, \bK)$. \qed
    \end{defin}

	The next definition collects some properties that expansions might enjoy.
	
	\begin{defin}
		\label{Def:Props_of_expansions}
		Let $\bK^*/\bK$ be an expansion with $\bK$ infinite, and fix $I\subseteq \age(\bK)$. For any ``$I$-property'' defined below, when $I = \age(\bK)$, we simply refer to ``property'' (except for \emph{finitary}; see below). 
		\begin{enumerate}
			\item 
			We call $\bK^*/\bK$ \emph{$I$-precompact} if $\bK^*(\bA, \bK)$ is finite for every $\bA\in I$, see \cite{Lionel_2013}. When $\bK^*/\bK$ is precompact, we can form the compact space 
            $$X_{\bK^*/\bK}:= \{\bK'/\bK: \age(\bK')\subseteq \age(\bK^*)\}$$
			where the topology is given by declaring that $\bK'_n/\bK\to \bK'_\infty/\bK$ iff for every $\bA\in \age(\bK)$ and $f\in \emb(\bA, \bK)$, we eventually have $\bK'_n{\cdot}f = \bK'_\infty{\cdot}f$. The natural right action of $\emb(\bK)$ on $X_{\bK^*/\bK}$ is continuous, see \cite{KPT, Zucker_Metr_UMF, Zucker_BR_Dynamics}.
			\item
			We call $\bK^*/\bK$ \emph{$I$-finitary} if we have $|\bK^*(I, \bK)|<\omega$ (in particular, $I$ must be finite) and furthermore, for every $\bB\in \age(\bK)$, if $\bB'\neq \bB^*\in \bK^*(\bB, \bK)$, then for some $\bA\in I$ and $f\in \emb(\bA, \bB)$, we have $\bB'{\cdot}f\neq \bB^*{\cdot}f$. Thus $\bK^*$ is determined by how it looks on copies of members of $I$ in $\bK$. For this definition, omitting $I$ from the notation means that for some finite $I$, the expansion is $I$-finitary. 
			
			In particular, finitary expansions are precompact, and a structure is finitary iff it is equivalent to one in a finite relational language; for more on structures in finite relational languages and their finitary expansions, see \cite{BPT_2013}.
   
			\item
			Given $I\subseteq \age(\bK)$, we say $\bK^*/\bK$ is  \emph{$I$-unavoidable} if for any $\eta\in \emb(\bK)$, we have $(\bK^*{\cdot}\eta)(I, \bK) = \bK^*(I, \bK)$. Equivalently, this happens if for every $\bA\in I$ and $\bA^*\in \bK^*(\bA, \bK)$, $\emb(\bA^*, \bK^*)\subseteq \emb(\bA, \bK)$ is unavoidable. Note that $\bK^*/\bK$ is unavoidable iff for every $\eta\in \emb(\bK)$, we have $\age(\bK^*{\cdot}\eta) = \age(\bK^*)$. Note that always $\age(\bK^*{\cdot}\eta)\subseteq \age(\bK^*)$. 
			\item
            We call $\bK^*/\bK$ \emph{recurrent} if for any $\eta\in \emb(\bK)$, there is $\theta\in \emb(\bK)$ with $\eta\circ \theta\in \emb(\bK^*)$, i.e.\ if $\emb(\bK^*)$ meets every right ideal of $\emb(\bK)$. This property is called \emph{self-similar} in \cite{Masulovic_2021}. 
			
			In particular, recurrent expansions are unavoidable.

            \item 
            We call $\bK^*/\bK$ \emph{embedding faithful} if $\emb(\bK^*) = \emb(\bK)$. 
            
			\item 
			If $\bK'/\bK$ is another expansion of $\bK$ and $I\subseteq \age(\bK)$, then $\bK'/\bK$ is an \emph{$I$-factor} of $\bK^*/\bK$ if for any $\bA\in I$ and $f_0, f_1\in \emb(\bA, \bK)$, we have that $\bK^*{\cdot}f_0 = \bK^*{\cdot}f_1$ implies $\bK'{\cdot}f_0 = \bK'{\cdot}f_1$, and $\bK^*/\bK$ and $\bK'/\bK$ are \emph{$I$-equivalent} if each is an $I$-factor of the other.

		\end{enumerate}
    When $I = \{\bA\in \age(\bK): A = |A| \leq n\}$ for some $n< \omega$, we can write $n$ instead of $I$. Whenever any of the properties above is applied to just $\bK$, it means that $\bK$ viewed as an expansion over its underlying set has the property. 
	\end{defin}

We collect some basic observations about Definition~\ref{Def:Props_of_expansions}. 

    \begin{fact}
        \label{Fact:Exp_facts}
        Fix an infinite $\cL$-structure $\bK$ and $I\subseteq \age(\bK)$. 
	\begin{enumerate}
        \item 
        \label{Item:Exp_facts_unav_brd_lower_bound}
        If $\bK^*/\bK$ is $I$-unavoidable, then $\rm{BRD}(\bA, \bK)\geq |\bK^*(\bA, \bK)|$ for each $\bA\in I$. If $\eta\in \emb(\bK)$, then $(\bK^*{\cdot}\eta)/\bK$ is also $I$-unavoidable. 
 
		\item
        \label{Item:Exp_facts_precompact_to_unav}
		If $I$ is finite and $\bK^*/\bK$ is $I$-precompact, then there is $\eta\in \emb(\bK)$ such that $(\bK^*{\cdot}\eta)/\bK$ is $I$-unavoidable (pick $\eta\in \emb(\bK)$ minimizing the possible value of $|(\bK^*{\cdot}\eta)(I, \bK)|$).
  
		\item
        \label{Item:Exp_facts_shrink_finite}
		If $I$ is finite and $\bK^*/\bK$ is $I$-finitary, then every member of $X_{\bK^*/\bK}$ is $I$-finitary.
	\end{enumerate}
\end{fact}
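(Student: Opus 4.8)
I would treat the three parts in order, reducing each to the identity $(\bK^*{\cdot}\eta){\cdot}f=\bK^*{\cdot}(\eta\circ f)$ and to the observation that $(\bK^*{\cdot}\eta)(I,\bK)\subseteq\bK^*(I,\bK)$ for every $\eta\in\emb(\bK)$ — the $I$-relative form of the containment recorded in Definition~\ref{Def:Props_of_expansions}(3), with the same short proof. So the plan is to set up these two facts first and use them throughout.

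For (1), fix $\bA\in I$ and $m$ with $m\leq|\bK^*(\bA,\bK)|$; proving $\rm{BRD}(\bA,\bK)\geq m$ for all such $m$ suffices and automatically covers the case $|\bK^*(\bA,\bK)|=\infty$. Choose distinct $\bA^{*}_{1},\dots,\bA^{*}_{m}\in\bK^*(\bA,\bK)$ and color $f\in\emb(\bA,\bK)$ by the index $i$ with $\bK^*{\cdot}f=\bA^{*}_{i}$, defaulting to $1$ when no such $i$ exists. For any $g\in\emb(\bK)$, the per-$\bA$ form of $I$-unavoidability from Definition~\ref{Def:Props_of_expansions}(3) gives $(\bK^*{\cdot}g)(\bA,\bK)=\bK^*(\bA,\bK)$, so every color $i\leq m$ is realized on $g\circ\emb(\bA,\bK)$; hence $g$ never reduces below $m$ colors, so $\bK\not\to(\bK)^\bA_{m,t}$ for all $t<m$, i.e.\ $\rm{BRD}(\bA,\bK)\geq m$. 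The second assertion of (1) is a one-liner: for $\theta\in\emb(\bK)$, both $((\bK^*{\cdot}\eta){\cdot}\theta)(I,\bK)=(\bK^*{\cdot}(\eta\circ\theta))(I,\bK)$ and $(\bK^*{\cdot}\eta)(I,\bK)$ equal $\bK^*(I,\bK)$, using $I$-unavoidability of $\bK^*/\bK$ at $\eta\circ\theta$ and at $\eta$.

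For (2) I would follow the parenthetical hint. With $I$ finite and $\bK^*/\bK$ being $I$-precompact, $\bK^*(I,\bK)$ is finite, so by the containment above $|(\bK^*{\cdot}\eta)(I,\bK)|$ ranges over a finite set of integers as $\eta$ varies over $\emb(\bK)$; pick $\eta_0$ minimizing it. For any $\theta\in\emb(\bK)$, the containment gives $((\bK^*{\cdot}\eta_0){\cdot}\theta)(I,\bK)\subseteq(\bK^*{\cdot}\eta_0)(I,\bK)$, while minimality applied to $\eta_0\circ\theta\in\emb(\bK)$ forces these two finite sets to have the same cardinality, hence to coincide. As $\theta$ was arbitrary, $(\bK^*{\cdot}\eta_0)/\bK$ is $I$-unavoidable.

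For (3), the crux is that $\bK'(\bC,\bK)\subseteq\bK^*(\bC,\bK)$ for every $\bK'/\bK\in X_{\bK^*/\bK}$ and every $\bC\in\age(\bK)$: each $\bK'{\cdot}f$ (for $f\in\emb(\bC,\bK)$) is an expansion of $\bC$ lying in $\age(\bK')\subseteq\age(\bK^*)$, and any expansion of $\bC$ embedding into $\bK^*$ has the form $\bK^*{\cdot}g$ with $g\in\emb(\bC,\bK)$ the reduct of a witnessing embedding. Granting this, $\bK'(I,\bK)\subseteq\bK^*(I,\bK)$ is finite, and for $\bB\in\age(\bK)$ any two distinct members of $\bK'(\bB,\bK)$ already lie in $\bK^*(\bB,\bK)$, so the separating clause of $I$-finitariness for $\bK^*/\bK$ supplies $\bA\in I$ and $f\in\emb(\bA,\bB)$ distinguishing them; thus $\bK'/\bK$ is $I$-finitary. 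I expect the only real friction to be bookkeeping: checking that $({\cdot})$ commutes with composition and with passing to reducts, that $\bK^*{\cdot}\eta$ really is an expansion of $\bK$ when $\eta\in\emb(\bK)$ (because then $\bK{\cdot}\eta=\bK$), and, in (3), that ``expansion of $\bC$ embedding into $\bK^*$'' coincides with ``equal to $\bK^*{\cdot}g$ for some $g\in\emb(\bC,\bK)$''. The two substantive ideas are the lower-bound coloring in (1) and the minimality argument in (2), the latter already signposted by the statement.
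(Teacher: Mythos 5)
Your proposal is correct and matches the paper's intent: the paper states this as a \emph{Fact} without proof, offering only the parenthetical minimization hint for item (2), which your argument follows exactly, and your arguments for (1) and (3) are the standard ones the authors clearly have in mind (the lower-bound coloring from an unavoidable expansion, and the inclusion $\bK'(\bB,\bK)\subseteq\bK^*(\bB,\bK)$ for members of $X_{\bK^*/\bK}$). The bookkeeping identities you flag, such as $(\bK^*{\cdot}\eta){\cdot}\theta=\bK^*{\cdot}(\eta\circ\theta)$ and the containment $(\bK^*{\cdot}\eta)(I,\bK)\subseteq\bK^*(I,\bK)$, are indeed the only ingredients needed and all check out.
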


	The following definition is the main subject of this survey. It first appeared in \cite{Zucker_BR_Dynamics} in the case that $\bK$ is \fr and for general $\bK$ in \cite{ACDMP}. Here we rephrase it slightly to take advantage of some of the vocabulary defined above.
	
	\begin{defin}[\cite{Zucker_BR_Dynamics}, \cite{ACDMP}]
		\label{Def:BRS}
		Given $\bK^*/\bK$ an expansion with $\bK$ infinite, we call $\bK^*/\bK$ a \emph{big Ramsey structure} (BRS) \emph{for $\bK$} if it is unavoidable and every $\bA\in \age(\bK)$ satisfies $|\bK^*(\bA, \bK)| = \rm{BRD}(\bA, \bK) < \infty$. We say that $\bK$ \emph{admits a BRS} if there is some expansion $\bK^*$ of $\bK$ with $\bK^*/\bK$ a BRS. A \fr class \emph{admits a BRS} if its \fr limit does.  
	\end{defin}

In particular, any BRS is precompact, and if $\bK$ admits a BRS, then $\bK$ has finite BRDs. We note that if $\bK^*/\bK$ is a BRS and $\eta\in \emb(\bK)$, then $(\bK^*{\cdot}\eta)/\bK$ is also a BRS. However, members of $X_{\bK^*/\bK}$ need not be BRSs.

The motivation for defining big Ramsey structures comes from topological dynamics and from analogy with what happens when considering \emph{small Ramsey degrees}. The term \emph{Ramsey degree} was introduced by Fouch\'e \cite{Fouche_1998} and popularized in \cite{KPT}; we add the prefix \emph{small} to better distinguish from big Ramsey degrees.   Given a \fr class with limit $\bK$, we say that $\bA\in \cK$ has \emph{small Ramsey degree} $t< \omega$ if $t$ is least such that for every $\bA\leq \bB\in \cK$ and every $r< \omega$, we have $\bK\to (\bB)^\bA_{r, t}$. Much like how unavoidable expansions provide lower bounds for big Ramsey degrees, ``syndetic'' expansions provide lower bounds for small Ramsey degrees. Sometimes called the \emph{order property} \cite{KPT} or the \emph{expansion property} \cite{Lionel_2013}, we say that $\bK^*/\bK$ is \emph{syndetic} if every $\bK'\in X_{\bK^*/\bK}$ satisfies $\age(\bK') = \age(\bK)$. Define a \emph{small Ramsey expansion} of $\bK$ to be any syndetic expansion $\bK^*/\bK$ which witnesses the exact small Ramsey degrees. The key difference with small Ramsey degrees is compactness; if $\cK$ has finite small Ramsey degrees, then automatically small Ramsey expansions exist \cite{Zucker_Metr_UMF, Lionel_2019}. Furthermore, if $\bK^*/\bK$ is a small Ramsey expansion, then \emph{every} member of $X_{\bK^*/\bK}$ is a small Ramsey expansion, and conversely, every small Ramsey expansion of $\bK$ is equivalent to some member of $X_{\bK^*/\bK}$. Since by a result of Ne\v{s}et\v{r}il and R\"odl \cite{NR_1977} $\age(\bK^*)$ is always a \fr class, one can simply choose $\bK^*$ to be the \fr limit, yielding a \fr Ramsey expansion of $\bK$. It turns out (see \cite{KPT, Zucker_Metr_UMF, Lionel_2013}) that $X_{\bK^*/\bK}$ is a dynamically meaningful object; it is the \emph{universal minimal flow} of the topological group $\aut(\bK)$. 

    By contrast, for big Ramsey degrees, the following important question is wide open.
    \begin{que}
        \label{Que:BRD_implies_BRS}
        Suppose $\cK$ is a \fr class with finite BRDs. Does $\cK$ admit a BRS?
    \end{que}
   If $\cK$ is a \fr class with limit $\bK$ and admitting a BRS $\bK^*$, one can form $X_{\bK^*/\bK}$ and ask about its properties as a dynamical object. The main result of \cite{Zucker_BR_Dynamics} is that $X_{\bK^*/\bK}$ is defined up to $\aut(\bK)$-flow isomorphism by a dynamical universal property much like the universal minimal flow. This has the following important consequence.

    \begin{fact}
        \label{Fact:Given_BRS_all_BRS}
        Suppose $\cK$ is a \fr class with limit $\bK$ and admitting a BRS $\bK^*$. Then if $\bK'/\bK$ is any other BRS, we have that $\bK'/\bK$ is equivalent (Definition~\ref{Def:Props_of_expansions}) to some member of $X_{\bK^*/\bK}$. In particular, if \emph{some} BRS for $\bK$ is finitary, then \emph{every} BRS for $\bK$ is finitary.
    \end{fact}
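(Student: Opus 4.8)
The plan is to deduce both assertions from the dynamical universal property of $X_{\bK^*/\bK}$ proved in \cite{Zucker_BR_Dynamics}, together with Fact~\ref{Fact:Exp_facts}. Since every BRS is precompact, for an arbitrary BRS $\bK'/\bK$ the compact flow $X_{\bK'/\bK}$ is defined, and the right $\emb(\bK)$-action restricts (as $\aut(\bK)$ is dense in $\emb(\bK)$) to an $\aut(\bK)$-flow. The defining features of a BRS --- unavoidability and $|\bK'(\bA,\bK)|=\rm{BRD}(\bA,\bK)<\omega$ for all $\bA\in\age(\bK)$ --- are precisely what makes $X_{\bK'/\bK}$, equipped with its distinguished point $\bK'$ and the evaluation maps $x\mapsto x{\cdot}f$ (for $\bA\in\age(\bK)$, $f\in\emb(\bA,\bK)$), a big Ramsey flow in the sense of \cite{Zucker_BR_Dynamics}; the same holds for $X_{\bK^*/\bK}$. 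By the main theorem of that paper the big Ramsey flow of $\bK$ is unique up to the appropriate notion of isomorphism, so there is a homeomorphism $\psi\colon X_{\bK'/\bK}\to X_{\bK^*/\bK}$ commuting with the $\aut(\bK)$-actions and intertwining the evaluation data: for each $\bA\in\age(\bK)$ there is a bijection $\beta_\bA\colon\bK'(\bA,\bK)\to\bK^*(\bA,\bK)$ with $\psi(x){\cdot}f=\beta_\bA(x{\cdot}f)$ for every $x$ and every $f\in\emb(\bA,\bK)$ (that a single relabeling $\beta_\bA$ serves for all $f$ is forced by equivariance together with unavoidability).

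Set $\bK'':=\psi(\bK')\in X_{\bK^*/\bK}$. For $\bA\in\age(\bK)$ and $f_0,f_1\in\emb(\bA,\bK)$ we then have $\bK''{\cdot}f_i=\beta_\bA(\bK'{\cdot}f_i)$, so injectivity of $\beta_\bA$ yields $\bK''{\cdot}f_0=\bK''{\cdot}f_1$ iff $\bK'{\cdot}f_0=\bK'{\cdot}f_1$; letting $\bA$ range over $\age(\bK)$, this says exactly that $\bK'/\bK$ and $\bK''/\bK$ are equivalent in the sense of Definition~\ref{Def:Props_of_expansions}, which is the first assertion. (If one only extracts from \cite{Zucker_BR_Dynamics} the weaker statement that $\psi$ is a flow map respecting evaluations in the sense that $x{\cdot}f_0=x{\cdot}f_1$ implies $\psi(x){\cdot}f_0=\psi(x){\cdot}f_1$, then the partition of $\emb(\bA,\bK)$ induced by $\bK''$ merely refines the one induced by $\bK'$; but both have exactly $\rm{BRD}(\bA,\bK)<\omega$ classes, since both expansions are BRSs, so the two partitions coincide anyway.)

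For the ``in particular'' statement, suppose some BRS $\bK^*/\bK$ is $I$-finitary for a finite $I\subseteq\age(\bK)$, and let $\bK'/\bK$ be any BRS. By the first part, $\bK'$ is equivalent to some $\bK''\in X_{\bK^*/\bK}$, and $\bK''$ is $I$-finitary by Fact~\ref{Fact:Exp_facts}(\ref{Item:Exp_facts_shrink_finite}). It remains to check that equivalence transfers $I$-finitariness. Since $\bK'$ and $\bK''$ induce the same partition of $\emb(\bA,\bK)$ for every $\bA$, we get $|\bK'(I,\bK)|=|\bK''(I,\bK)|<\omega$; and whenever $\bB_1\neq\bB_2$ in $\bK'(\bB,\bK)$, the members of $\bK''(\bB,\bK)$ they correspond to (under the common type-partition of $\emb(\bB,\bK)$) are also distinct, hence differ on a copy of some $\bA\in I$ inside $\bB$, hence so do $\bB_1$ and $\bB_2$. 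Thus $\bK'$ is $I$-finitary, so finitary.

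The one step I expect to require genuine care --- everything else being bookkeeping --- is the claim, used twice above, that the isomorphism supplied by \cite{Zucker_BR_Dynamics} really intertwines the evaluation maps (ideally via a single relabeling $\beta_\bA$, or at least strongly enough to run the refinement argument). A bare $\aut(\bK)$-flow isomorphism carries no a priori information about how the two expansions sit over $\bK$, so this must be read off from the precise formulation of the universal property in that paper; making this explicit is the crux of the argument.
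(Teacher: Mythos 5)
The paper offers no written proof of this Fact: it is stated as a direct consequence of the main theorem of \cite{Zucker_BR_Dynamics} (that $X_{\bK^*/\bK}$ is characterized up to $\aut(\bK)$-flow isomorphism by a universal property), so your overall strategy --- route everything through that universal property --- is exactly the intended one. The issue is that the step you yourself flag as ``the one requiring genuine care'' is not bookkeeping; it is the entire content, and neither of your two ways of handling it closes.

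First, the isomorphism supplied by uniqueness of the universal completion flow is base-point-preserving (the standard argument: compose the two base-point-preserving maps and use that a base-point-preserving self-map of a completion flow is the identity on a dense set). So if that $\psi$ intertwined the evaluation maps as you posit, you would get $\bK''=\psi(\bK')=\bK^*$ and conclude that \emph{any two} BRSs for $\bK$ are equivalent. That is false: for $\bK$ a countable set (Example~\ref{Exa:Countable_set}), every linear ordering of $K$ is a BRS, but an ordering of type $\omega$ and one of type $\omega+\omega$ are not equivalent --- equivalence of two linear-order expansions forces one to be the other or its reverse, already at $|A|=2$. Hence a bare flow isomorphism, and in particular the canonical one, does \emph{not} relate $x$ and $\psi(x)$ as expansions, and the intertwining data $\beta_\bA$ cannot be ``read off'' from equivariance; producing the member of $X_{\bK^*/\bK}$ equivalent to $\bK'$ requires a genuinely different construction (in the spirit of Proposition~\ref{Prop:RecFinBRS_canonical}: factor maps between precompact expansions, plus the observation that a surjection between finite sets of equal cardinality $\rm{BRD}(\bA,\bK)$ is a bijection). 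Second, your fallback counting argument asserts that the partition induced by $\bK''=\psi(\bK')$ has exactly $\rm{BRD}(\bA,\bK)$ classes ``since both expansions are BRSs,'' but $\bK''$ is merely a member of $X_{\bK^*/\bK}$, and the paper explicitly notes that such members need not be BRSs; you only get $|\bK''(\bA,\bK)|\leq\rm{BRD}(\bA,\bK)$ for free, which is the wrong direction for your refinement argument. Your derivation of the ``in particular'' clause from the first clause (transferring $I$-finitariness across equivalence via Fact~\ref{Fact:Exp_facts}(\ref{Item:Exp_facts_shrink_finite})) is correct and is the easy half; the gap is entirely in establishing the first clause.
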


    However, by asking that our BRSs satisfy extra properties, we can obtain, up to equivalence and bi-embeddability, a canonical choice of BRS.

    \begin{prop}[see Proposition~2.4 of \cite{DZ}]
        \label{Prop:RecFinBRS_canonical}
        If  $\bK^*, \bK'$ are two expansions of $\bK$ with $\bK^*/\bK$ a BRS and $\bK'/\bK$ $I$-precompact for some finite $I\subseteq \age(\bK)$, then there is $\eta\in \emb(\bK)$ such that $(\bK'{\cdot}\eta)/\bK$ is an $I$-factor of $(\bK^*{\cdot}\eta)/\bK$. If $\bK^*/\bK$ is also recurrent, then there is $\eta\in \emb(\bK)$ such that $(\bK'{\cdot}\eta)/\bK$ is an $I$-factor of $\bK^*/\bK$.

        In particular, if $\bK$ admits a recurrent, finitary BRS, then given any two recurrent BRSs $\bK^*$ and $\bK'$ for $\bK$, there is $\eta\in \emb(\bK)$ with $(\bK^*{\cdot}\eta)/\bK)$ equivalent to $\bK'/\bK$. 
    \end{prop}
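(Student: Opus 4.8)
The plan is to distill a single \emph{canonization lemma} handling one finite structure at a time, then bootstrap it across the finite set $I$ by composing embeddings, and finally turn an $I$-factor into full equivalence using precompactness and finitariness. \emph{Canonization lemma:} if $\bK^*/\bK$ is a BRS, $\bA\in\age(\bK)$, and $\gamma\colon\emb(\bA,\bK)\to r$ is an arbitrary finite coloring, then there is $\eta\in\emb(\bK)$ such that $\gamma(\eta\circ f)$ depends only on the expansion $\bK^*{\cdot}(\eta\circ f)$, for every $f\in\emb(\bA,\bK)$. Indeed, put $t=\rm{BRD}(\bA,\bK)=|\bK^*(\bA,\bK)|$ and feed the finite coloring $\chi(f)=(\gamma(f),\bK^*{\cdot}f)$ of $\emb(\bA,\bK)$ into the defining property $\bK\to(\bK)^\bA_{r',t}$ of $\rm{BRD}(\bA,\bK)$ (valid for all $r'$, in particular $r'=r{\cdot}|\bK^*(\bA,\bK)|$): we get $\eta\in\emb(\bK)$ with $|\chi[\eta\circ\emb(\bA,\bK)]|\le t$. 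Since $\bK^*/\bK$ is unavoidable, the second-coordinate projection of $\chi[\eta\circ\emb(\bA,\bK)]$ is all of $\bK^*(\bA,\bK)$, which has size $t$; hence that projection is a bijection, so on $\eta\circ\emb(\bA,\bK)$ the $\gamma$-color is determined by the $\bK^*$-expansion. All of the Ramsey-theoretic content lives in this step.

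For the bookkeeping I will use that $\bK{\cdot}\eta=\bK$ for $\eta\in\emb(\bK)$ (uniqueness of the pulled-back structure, as $\eta\in\emb(\bK,\bK)$), that $(\bK^*{\cdot}\eta){\cdot}f=\bK^*{\cdot}(\eta\circ f)$, that $\bK^*{\cdot}\eta$ is again a BRS for $\bK$, and that ``$(\bK'{\cdot}\eta)/\bK$ is an $I$-factor of $(\bK^*{\cdot}\eta)/\bK$'' unwinds to ``$\bK'{\cdot}(\eta\circ f)$ depends only on $\bK^*{\cdot}(\eta\circ f)$ for all $\bA\in I$, $f\in\emb(\bA,\bK)$''. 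Now enumerate $I=\{\bA_1,\dots,\bA_m\}$, set $\eta_0=\mathrm{id}$, and at step $j$ apply the canonization lemma to the BRS $\bK^*{\cdot}\eta_{j-1}$, the structure $\bA_j$, and the finite coloring $g\mapsto\bK'{\cdot}(\eta_{j-1}\circ g)$ of $\emb(\bA_j,\bK)$ (finite by $I$-precompactness of $\bK'/\bK$), obtaining $\theta_j$; put $\eta_j=\eta_{j-1}\circ\theta_j$. The lemma gives that $\bK'{\cdot}(\eta_j\circ g)$ depends only on $(\bK^*{\cdot}\eta_{j-1}){\cdot}(\theta_j\circ g)=\bK^*{\cdot}(\eta_j\circ g)$ — the desired property at $\bA_j$ — and the properties secured at $\bA_1,\dots,\bA_{j-1}$ are not disturbed, since $\theta_j\circ g\in\emb(\bA_i,\bK)$ whenever $g\in\emb(\bA_i,\bK)$. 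Then $\eta:=\eta_m$ witnesses the first assertion. For the second assertion, with $\eta$ as just produced and $\bK^*/\bK$ recurrent, pick $\theta$ with $\eta\circ\theta\in\emb(\bK^*)$; then $\bK^*{\cdot}(\eta\circ\theta)=\bK^*$, so if $\bK^*{\cdot}f_0=\bK^*{\cdot}f_1$ with $\bA\in I$ and $f_i\in\emb(\bA,\bK)$ we get $\bK^*{\cdot}(\eta\circ\theta\circ f_i)=\bK^*{\cdot}f_i$, and the $I$-factor property at $\eta$ forces $\bK'{\cdot}(\eta\circ\theta\circ f_0)=\bK'{\cdot}(\eta\circ\theta\circ f_1)$; hence $\eta\circ\theta$ works.

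For the ``in particular'', suppose $\bK$ admits a recurrent finitary BRS. By Fact~\ref{Fact:Given_BRS_all_BRS} every BRS for $\bK$ is finitary, so the two given recurrent BRSs $\bK^*,\bK'$ are $I$-finitary for a common finite $I\subseteq\age(\bK)$. Apply the recurrent case to get $\eta$ with $(\bK'{\cdot}\eta)/\bK$ an $I$-factor of $\bK^*/\bK$, and then make two soft upgrades. First: $\bK^*/\bK$ and $(\bK'{\cdot}\eta)/\bK$ are unavoidable BRSs (Fact~\ref{Fact:Exp_facts}), hence each has exactly $\rm{BRD}(\bA,\bK)$ expansions of each $\bA\in I$; the $I$-factor map between these two finite sets of equal size is a surjection, hence a bijection, so the two expansions are $I$-equivalent. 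Second: both being $I$-finitary (and $\bK'{\cdot}\eta$ is again $I$-finitary by Fact~\ref{Fact:Exp_facts}), $I$-equivalence upgrades to full equivalence, since an expansion of any $\bB\in\age(\bK)$ lying in the relevant finite set is pinned down by its restrictions to copies of members of $I$, and these restrictions agree on the two sides by $I$-equivalence together with $(\bK^*{\cdot}g){\cdot}h=\bK^*{\cdot}(g\circ h)$. Finally, recurrence of $\bK'$ gives $\theta$ with $\eta\circ\theta\in\emb(\bK')$, so $\bK'=(\bK'{\cdot}\eta){\cdot}\theta$; since equivalence is preserved under precomposing both sides with a member of $\emb(\bK)$, we get that $\bK'/\bK$ is equivalent to $(\bK^*{\cdot}\theta)/\bK$, and $\theta$ is the desired witness.

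The only genuinely Ramsey-theoretic input is the one-line canonization lemma; everything else is bookkeeping. I expect the trickiest point to be the iteration: one must apply the lemma to the \emph{shifted} BRS $\bK^*{\cdot}\eta_{j-1}$ rather than to $\bK^*$ itself (otherwise the dependence comes out in terms of $\bK^*{\cdot}(\theta_j\circ g)$ instead of $\bK^*{\cdot}(\eta_j\circ g)$), and one must check that right-composition preserves the properties already obtained. In the last part one has to chain the reductions in the right order — $I$-factor to $I$-equivalent via counting (using unavoidability of BRSs), $I$-equivalent to equivalent via finitariness, and a final appeal to recurrence of $\bK'$ to move the twist $\eta$ from $\bK'$ onto $\bK^*$.
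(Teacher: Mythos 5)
Your proof is correct. Since the survey gives no proof of this proposition (it defers to Proposition~2.4 of \cite{DZ}), there is no in-text argument to compare against; but your canonization lemma --- color $\emb(\bA,\bK)$ by the pair $(\gamma(f),\bK^*{\cdot}f)$, apply $\rm{BRD}(\bA,\bK)=|\bK^*(\bA,\bK)|$, and use unavoidability to force the second-coordinate projection of the surviving color set to be a bijection --- is exactly the mechanism behind the cited result, and your iteration over $I$ (applying the lemma to the shifted BRS $\bK^*{\cdot}\eta_{j-1}$, with preservation of earlier steps under right composition) and your use of recurrence to replace $\eta$ by $\eta\circ\theta$ are both handled correctly.

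One point is worth flagging in the ``in particular'' clause: you invoke Fact~\ref{Fact:Given_BRS_all_BRS} to conclude that every BRS for $\bK$ is finitary, but that Fact is stated only when $\bK$ is a \fr structure, whereas the proposition (and the rest of your argument) is set up for an arbitrary infinite $\bK$. The gap is easy to close using only the two parts you have already proved: if $\bK''/\bK$ is the given recurrent $I_0$-finitary BRS and $\bK^*/\bK$ is any BRS, then for each $\bB\in\age(\bK)$ apply the second assertion with $I=I_0\cup\{\bB\}$ (legitimate, since every BRS is precompact) to obtain $\eta$ with $(\bK^*{\cdot}\eta)/\bK$ an $I$-factor of $\bK''/\bK$; your counting argument upgrades this to an $I$-equivalence, and unavoidability of $\bK^*/\bK$ (so that $\bK^*(\bB,\bK)=(\bK^*{\cdot}\eta)(\bB,\bK)$) then transports the $I_0$-determination property from $\bK''$ to $\bK^*$ at $\bB$, showing $\bK^*/\bK$ is $I_0$-finitary. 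With that substitution the remaining steps --- factor to equivalence by counting, $I$-equivalence to full equivalence via finitariness, and the final appeal to recurrence of $\bK'$ to move the twist onto $\bK^*$ --- all go through in the stated generality.
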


 Theorem~\ref{Thm:Weak_Biembeddability} gives an abstract account of the main approach for putting upper bounds on big Ramsey degrees.  Given a structure $\bK$ whose BRDs we are interested in, we can attempt to compare $\bK$ with some other structure $\bM^*$ whose BRDs we already know something about. The ``comparison'' is a weak form of bi-embeddability.  

\begin{defin}
    \label{Def:Weak_biembedding}
    Fix a class $\cK$ of finite structures along with structures $\bB$ and $\bC$. We say that a map $f\colon B\to C$ is a \emph{$\cK$-approximate embedding} from $\bB$ to $\bC$ if for each $\bA\in \cK$, we have $f\circ \emb(\bA, \bB)\subseteq \emb(\bA, \bC)$. Write $\emb_\cK(\bB, \bC)$ for the set of $\cK$-approximate embeddings from $\bB$ to $\bC$. When $\cK = \age(\bK)$, we can simply say $\bK$-approximate and write $\emb_\bK(\bB, \bC)$. In this case, note that for any (not necessarily finite) $\bA\leq \bK$, we have $f\circ \emb(\bA, \bB)\subseteq \emb(\bA, \bC)$.  
    
    Fix an infinite structure $\bK$. We call a tuple $(\bM, \phi, \psi)$ a \emph{weak bi-embedding} for $\bK$ if $\bM$ is an infinite structure, $\phi\in \emb(\bK, \bM)$, and $\psi\in \emb_\bK(\bM, \bK)$. 
\end{defin}

As an example, consider $\bK$ the rational linear order and $\bM$ the countable generic partial order (see Example~\ref{exa:poset}). Then letting $\phi\in \emb(\bK, \bM)$ and letting $\psi\colon M\to K$ be any embedding of some total linear extension of $\bM$ into $\bK$, then $(\bM, \phi, \psi)$ is a weak bi-embedding for $\bK$. 

\begin{theorem}
	\label{Thm:Weak_Biembeddability}
    Suppose $\bK$ is an infinite structure and $(\bM, \phi, \psi)$ is a weak bi-embedding for $\bK$. Then given an expansion $\bM^*/\bM$ and $\bA\in \age(\bK)$, we have $$\rm{BRD}(\bA, \bK)\leq \sum_{\bA^*\in (\bM^*{\cdot}\phi)(\bA, \bK)} \rm{BRD}(\bA^*, \bM^*). \eqno \qed$$
  
\end{theorem}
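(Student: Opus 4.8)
The plan is to reduce a coloring of $\emb(\bA,\bK)$ to a coloring of $\emb(\bB^*,\bM^*)$ for each of the finitely many expansions $\bB^*\in(\bM^*{\cdot}\phi)(\bA,\bK)$, applying the definition of $\rm{BRD}(\bB^*,\bM^*)$ once for each such $\bB^*$ and then assembling the results along a single embedding of $\bK$ produced by composing with $\psi$. First I would fix a finite coloring $\gamma\colon\emb(\bA,\bK)\to r$ and enumerate $(\bM^*{\cdot}\phi)(\bA,\bK)=\{\bA^*_i:i<n\}$; note this set is finite because any expansion of $\bK$ to which $\bM^*$ pulls back (via $\phi$) gives only boundedly many expansions of the finite structure $\bA$ — strictly speaking I should first check $(\bM^*{\cdot}\phi)(\bA,\bK)$ is finite, which follows since it is a subset of the set of $\cL_{\bM^*}$-expansions of $\bA$, a finite set as $\bA$ is finite and only finitely many relation symbols of bounded arity can be relevant. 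For each $i<n$, I want to transfer $\gamma$ to a coloring of $\emb(\bA^*_i,\bM^*)$: given $g\in\emb(\bA^*_i,\bM^*)$, the underlying map $g\colon A\to M$ composed with $\psi$ lands in $\emb(\bA,\bK)$ because $\psi\in\emb_\bK(\bM,\bK)$ is a $\bK$-approximate embedding and $\bA\leq\bK$, so $\psi\circ\emb(\bA,\bM)\subseteq\emb(\bA,\bK)$; hence $\gamma_i(g):=\gamma(\psi\circ g)$ is well-defined.

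Next I would apply the hypothesis $\rm{BRD}(\bA^*_i,\bM^*)=t_i$ successively. Start with $\eta_0=\mathrm{id}_\bM\in\emb(\bM)$; having built $\eta_{i}\in\emb(\bM)$, apply the big Ramsey degree bound for $\bA^*_i$ in $\bM^*$ — more precisely, to the coloring $\gamma_i$ pulled back along $\eta_i$ — to obtain $\theta_i\in\emb(\bM)$ with $\eta_i\circ\theta_i\in\emb(\bM^*)$ (using that $\bM^*/\bM$ can be moved by $\emb(\bM)$ and that colorings of $\emb(\bA^*_i,\bM)$ have image of size $\leq t_i$ after composing with a suitable embedding; here I need to be a little careful that $\rm{BRD}$ is stated for colorings of $\emb(\bA^*_i,\bM^*)$, so I compose only with embeddings lying in $\emb(\bM^*)$ at this stage), and set $\eta_{i+1}=\eta_i\circ\theta_i$. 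After $n$ steps I have $\eta_n\in\emb(\bM)$ such that for each $i<n$, the restriction of $\gamma_i$ (appropriately pulled back) to $\eta_n\circ\emb(\bA^*_i,\bM^*)$ takes at most $t_i$ values. The point is that on the single monochromatic-up-to-$t_i$ embedding $\eta_n$, all $n$ colors are simultaneously controlled, because each later step passes through an even thinner copy inside the previous one.

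Finally I would descend to $\bK$. Since $\phi\in\emb(\bK,\bM)$ and $\eta_n\in\emb(\bM)$, the composite $\phi\circ(\text{something})$... more precisely, I use $\psi$: the map $\psi\circ\eta_n\circ\phi$ need not be in $\emb(\bK)$ directly, but the hypothesis $\psi\in\emb_\bK(\bM,\bK)$ together with $\phi\in\emb(\bK,\bM)$ gives, for any $\bC\leq\bK$, that $\psi\circ\eta_n\circ\phi\circ\emb(\bC,\bK)\subseteq\psi\circ\emb(\bC,\bM)\subseteq\emb(\bC,\bK)$; taking $\bC=\bK$ shows $\chi:=\psi\circ\eta_n\circ\phi\in\emb_\bK(\bK,\bK)=\emb(\bK)$. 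Now any $f\in\emb(\bA,\bK)$ satisfies that $\phi\circ f\in\emb(\bA,\bM)$ picks out one of the expansions $\bA^*_i$ (namely the one with $(\bM^*{\cdot}\phi){\cdot}f = (\bM^*{\cdot}(\phi\circ f))$... i.e. $\phi\circ f$ is an embedding of that very $\bA^*_i$ into $\bM^*$), and $\gamma(\chi\circ f)=\gamma(\psi\circ\eta_n\circ\phi\circ f)=\gamma_i(\eta_n\circ\phi\circ f)$ with $\eta_n\circ\phi\circ f\in\eta_n\circ\emb(\bA^*_i,\bM^*)$; hence $\gamma[\chi\circ\emb(\bA,\bK)]\subseteq\bigcup_{i<n}\gamma_i[\eta_n\circ\emb(\bA^*_i,\bM^*)]$, which has size $\leq\sum_{i<n}t_i$. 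Therefore $\rm{BRD}(\bA,\bK)\leq\sum_i t_i$ as claimed.

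The main obstacle I anticipate is bookkeeping around which ambient structure the big Ramsey degree applies to: $\rm{BRD}(\bA^*_i,\bM^*)$ controls colorings of $\emb(\bA^*_i,\bM^*)$ under self-embeddings in $\emb(\bM^*)$, whereas the iterative thinning wants to compose elements of $\emb(\bM)$. Reconciling this requires being careful that at each stage the embedding $\eta_i\circ\theta_i$ we pass to actually lies in $\emb(\bM^*)$ (so that $\eta_n\circ\phi\circ f$, which maps into $\bM^*$ correctly, does land inside a set to which $\rm{BRD}(\bA^*_i,\bM^*)$ was applied), and checking that applying the degree of $\bA^*_{i+1}$ does not disturb the control already achieved for $\bA^*_0,\dots,\bA^*_i$ — which holds because post-composing a color-controlled family with a further embedding only shrinks the image. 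A secondary routine point is verifying finiteness of $(\bM^*{\cdot}\phi)(\bA,\bK)$ and that $\phi\circ f$ genuinely realizes one of the listed expansions of $\bA$; both are immediate from the definitions of $\bM^*(\bA,\bK)$ and of expansions.
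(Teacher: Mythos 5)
Your proof is correct and takes essentially the same route as the paper's: transfer the coloring through $\psi$ to each of the finitely many expansions $\bA^*_i$, thin iteratively inside $\emb(\bM^*)$ so that later stages preserve earlier control, and conclude via the composite $\psi\circ\xi_n\circ\phi\in\emb(\bK)$. The one inaccuracy is your claim that $(\bM^*{\cdot}\phi)(\bA,\bK)$ is automatically finite --- since $\bbL$, and hence $\cL_{\bM^*}$, may be infinite, this set can be infinite; the paper instead just notes that the stated inequality is vacuous in that case and assumes finiteness thereafter, which is what your argument actually needs.
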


\begin{proof}
	The theorem statement is vacuous if $(\bM^*{\cdot}\phi)(\bA, \bK)$ is infinite, so assume it is finite, and write $(\bM^*{\cdot}\phi)(\bA, \bK) = \{\bA^*_k: k< n\}$. Fix $0< r< \omega$ and a coloring $\gamma\colon \emb(\bA, \bK)\to r$. If $k< n$ and if embeddings $\{\eta_i: i< k\}\subseteq \emb(\bM^*)$ have been determined, write $\xi_k = \eta_0\circdots \eta_{k-1}$, and consider the coloring $\gamma{\cdot}\psi{\circ}\xi_k \colon \emb(\bA^*_k, \bM^*)\to r$. Find $\eta_k\in \emb(\bM)$ with $|(\gamma{\cdot}\psi{\circ}\xi_k)[\eta_k{\circ}\emb(\bA^*_k,\allowbreak \bM^*)]|\leq \rm{BRD}(\bA^*_k, \bM^*)$, and put $\xi_{k+1} = \xi_k\circ \eta_k$. Once all of $\eta_0,\ldots, \eta_{n-1}$ have been determined, then we have $\psi\circ \xi_n \circ \phi\in \emb(\bK)$. To show that $|\gamma[\psi{\circ}\xi_n{\circ}\phi{\circ}\emb(\bA,\allowbreak \bK)]|\leq \sum_{k< n} \rm{BRD}(\bA^*_k, \bM)$, fix $f\in \emb(\bA, \bK)$. For some $k< n$, we have $\bM^*{\cdot}(\phi\circ f) = \bA^*_k$. For this $k< n$, we have $\eta_{k}\circdots\eta_{n-1}\circ \phi\circ f\in \eta_k\circ \emb(\bA^*_k, \bM^*)$, implying that $\gamma(\psi{\circ}\xi_n{\circ}\phi{\circ}f)\in (\gamma{\cdot}\psi{\circ}\xi_k)[\eta_k{\circ}\emb(\bA^*_k, \bM^*)]$. 
\end{proof}

\begin{cor}
    \label{Cor:Bi_embeddability_Ramsey}
    In the setting of Theorem~\ref{Thm:Weak_Biembeddability}, if $\age(\bM^*{\cdot}\phi)\subseteq \rm{BRO}(\bM^*)$, then $\rm{BRD}(\bA, \bK)\leq |(\bM^*{\cdot}\phi)(\bA, \bK)|$. 
\end{cor}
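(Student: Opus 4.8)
The plan is to derive Corollary~\ref{Cor:Bi_embeddability_Ramsey} as an immediate consequence of Theorem~\ref{Thm:Weak_Biembeddability} by observing that the hypothesis $\age(\bM^*{\cdot}\phi)\subseteq \rm{BRO}(\bM^*)$ forces every term in the sum on the right-hand side of the theorem's inequality to equal $1$. First I would unpack what $\age(\bM^*{\cdot}\phi)\subseteq \rm{BRO}(\bM^*)$ means: recall from Definition~\ref{Def:BRD} that $\rm{BRO}(\bM^*)$ is the class of big Ramsey objects of $\bM^*$, i.e.\ those $\bB^*\in \age(\bM^*)$ with $\rm{BRD}(\bB^*, \bM^*) = 1$. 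So the hypothesis says precisely that every finite substructure of $\bM^*{\cdot}\phi$ has big Ramsey degree $1$ in $\bM^*$.

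Next I would connect this to the summation index set $(\bM^*{\cdot}\phi)(\bA, \bK)$ appearing in Theorem~\ref{Thm:Weak_Biembeddability}. By Definition~\ref{Def:Expansion}, $(\bM^*{\cdot}\phi)(\bA, \bK) = \{(\bM^*{\cdot}\phi){\cdot}f : f\in \emb(\bA, \bK)\}$, so each $\bA^*$ in this set is of the form $(\bM^*{\cdot}\phi){\cdot}f$ for some $f\in \emb(\bA, \bK)$; in particular $\bA^*\leq \bM^*{\cdot}\phi$, hence $\bA^*\in \age(\bM^*{\cdot}\phi)$. (One should check $\bA^*$ is finite, which holds since $|A^*| = |A| < \omega$; and that it genuinely embeds into $\bM^*{\cdot}\phi$, which is how $\bM^*{\cdot}f$ is defined in the Notation preceding Definition~\ref{Def:Expansion}, using that $\phi\circ f$ makes sense because $\psi\in \emb_\bK(\bM,\bK)$ is not needed here but $\phi\in \emb(\bK,\bM)$ gives $\phi\circ f\in \emb(\bA,\bM)$.) Therefore by the hypothesis, $\rm{BRD}(\bA^*, \bM^*) = 1$ for every $\bA^*\in (\bM^*{\cdot}\phi)(\bA, \bK)$.

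Finally I would plug this into Theorem~\ref{Thm:Weak_Biembeddability}: the inequality $\rm{BRD}(\bA, \bK)\leq \sum_{\bA^*\in (\bM^*{\cdot}\phi)(\bA, \bK)} \rm{BRD}(\bA^*, \bM^*)$ becomes $\rm{BRD}(\bA, \bK)\leq \sum_{\bA^*\in (\bM^*{\cdot}\phi)(\bA, \bK)} 1 = |(\bM^*{\cdot}\phi)(\bA, \bK)|$, which is exactly the claimed bound. I do not anticipate a serious obstacle here; the only mild subtlety is making sure the membership $(\bM^*{\cdot}\phi)(\bA, \bK)\subseteq \age(\bM^*{\cdot}\phi)$ is spelled out correctly so that the hypothesis applies termwise, and implicitly that the right-hand sum is finite (which is part of what makes the bound meaningful and follows once we know $(\bM^*{\cdot}\phi)(\bA,\bK)$ is finite, the case the theorem is non-vacuous). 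The whole argument is two or three lines.
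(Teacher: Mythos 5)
Your proposal is correct and matches the paper's (implicit) argument exactly: the corollary is stated without proof precisely because, as you observe, the hypothesis $\age(\bM^*{\cdot}\phi)\subseteq \rm{BRO}(\bM^*)$ makes every summand $\rm{BRD}(\bA^*,\bM^*)$ in Theorem~\ref{Thm:Weak_Biembeddability} equal to $1$, collapsing the sum to $|(\bM^*{\cdot}\phi)(\bA,\bK)|$. Your care in checking that each $\bA^*\in(\bM^*{\cdot}\phi)(\bA,\bK)$ indeed lies in $\age(\bM^*{\cdot}\phi)$ is the only step worth spelling out, and you have it right.
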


The next corollary is originally due to Ma\v{s}ulovi\'c and generalizes Theorem~4.3 of \cite{DB}. 

\begin{cor}[\cite{Masulovic_2021}, Theorems 6.1, 6.3]
    \label{Cor:Precompact_exp_BRDs}
    If $\bK^*/\bK$ is an expansion and $\bA\in \age(\bK)$, then 
    $$\rm{BRD}(\bA, \bK)\leq \sum_{\bA^*\in \bK^*(\bA, \bK)} \rm{BRD}(\bA^*, \bK^*).$$ 
    If $\bK^*/\bK$ is also recurrent, then we have equality. 
\end{cor}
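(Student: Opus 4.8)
The plan is to derive Corollary~\ref{Cor:Precompact_exp_BRDs} as essentially the special case of Theorem~\ref{Thm:Weak_Biembeddability} (and Corollary~\ref{Cor:Bi_embeddability_Ramsey}) in which $\bM = \bK$, $\phi = \mathrm{id}_K$, $\psi = \mathrm{id}_K$, and $\bM^* = \bK^*$. First I would check that $(\bK, \mathrm{id}_K, \mathrm{id}_K)$ is a weak bi-embedding for $\bK$: the identity is trivially an embedding $\bK \to \bK$, and it is a $\bK$-approximate embedding since $\mathrm{id}_K\circ \emb(\bA, \bK) = \emb(\bA, \bK)$ for every $\bA \le \bK$. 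With $\phi = \mathrm{id}$ we have $\bM^*{\cdot}\phi = \bK^*{\cdot}\mathrm{id}_K = \bK^*$, so $(\bM^*{\cdot}\phi)(\bA, \bK) = \bK^*(\bA, \bK)$, and $\rm{BRD}(\bA^*, \bM^*) = \rm{BRD}(\bA^*, \bK^*)$. Plugging into the conclusion of Theorem~\ref{Thm:Weak_Biembeddability} gives exactly
$$\rm{BRD}(\bA, \bK) \le \sum_{\bA^* \in \bK^*(\bA, \bK)} \rm{BRD}(\bA^*, \bK^*),$$
which is the first assertion. (If $\bK^*(\bA, \bK)$ is infinite the right-hand side is $\infty$ and the inequality is vacuous, consistent with the convention in the theorem.)

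For the second assertion, assume $\bK^*/\bK$ is recurrent; I must prove the reverse inequality $\rm{BRD}(\bA, \bK) \ge \sum_{\bA^* \in \bK^*(\bA, \bK)} \rm{BRD}(\bA^*, \bK^*)$. The idea is that a bad coloring of $\emb(\bA^*, \bK^*)$ for each expansion $\bA^*$ can be combined into a single bad coloring of $\emb(\bA, \bK)$. First note that $\bK^*/\bK$ recurrent implies unavoidable (Definition~\ref{Def:Props_of_expansions}), so by Fact~\ref{Fact:Exp_facts}(\ref{Item:Exp_facts_unav_brd_lower_bound}) the set $\bK^*(\bA, \bK)$ is finite (otherwise $\rm{BRD}(\bA,\bK) = \infty$ and there is nothing to prove anyway; but in fact a BRS-style argument shows we may as well assume finiteness), say $\bK^*(\bA, \bK) = \{\bA^*_k : k < n\}$, with $\rm{BRD}(\bA^*_k, \bK^*) = \ell_k$. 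Since $\emb(\bA, \bK) = \bigsqcup_{k<n} \emb(\bA^*_k, \bK^*)$ is a partition (every $f \in \emb(\bA,\bK)$ induces exactly one expansion $\bK^*{\cdot}f \in \bK^*(\bA,\bK)$, since $\bK^*/\bK$ unavoidable guarantees it lies in $\bK^*(\bA,\bK)$), I would for each $k$ pick a coloring $\gamma_k \colon \emb(\bA^*_k, \bK^*) \to \ell_k$ with no $g \in \emb(\bK^*)$ making $|\gamma_k[g \circ \emb(\bA^*_k, \bK^*)]| < \ell_k$, and assemble $\gamma\colon \emb(\bA, \bK)\to \sum_k \ell_k$ by letting $\gamma$ restricted to $\emb(\bA^*_k,\bK^*)$ be $\gamma_k$ followed by the inclusion of $\ell_k$ into the $k$-th block of the target. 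It then suffices to show no $\eta \in \emb(\bK)$ can reduce $|\gamma[\eta\circ \emb(\bA, \bK)]|$ below $\sum_k \ell_k$.

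The main obstacle is precisely this last point, and it is where recurrence is used: a generic $\eta \in \emb(\bK)$ need not lie in $\emb(\bK^*)$, so $\eta \circ \emb(\bA^*_k, \bK^*)$ need not consist of $\bK^*$-embeddings, and the partition structure can get scrambled. Here I would invoke recurrence: given $\eta \in \emb(\bK)$, choose $\theta \in \emb(\bK)$ with $\eta\circ\theta \in \emb(\bK^*)$. Then $\eta\circ\theta$ preserves the partition, i.e.\ $(\eta\circ\theta)\circ \emb(\bA^*_k, \bK^*) \subseteq \emb(\bA^*_k, \bK^*)$ for each $k$, so $\gamma$ restricted to $(\eta\circ\theta)\circ\emb(\bA^*_k,\bK^*)$ equals $\gamma_k$ on $(\eta\circ\theta)\circ\emb(\bA^*_k,\bK^*)$ followed by the block inclusion, whose image has size $\ell_k$ by choice of $\gamma_k$ (applied with the $\emb(\bK^*)$-element $\eta\circ\theta$). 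Since the $n$ blocks are disjoint in the target, $|\gamma[(\eta\circ\theta)\circ\emb(\bA,\bK)]| = \sum_{k<n}\ell_k$, hence $|\gamma[\eta\circ\emb(\bA,\bK)]| \ge |\gamma[(\eta\circ\theta)\circ\emb(\bA,\bK)]| = \sum_{k<n}\ell_k$. This shows $\rm{BRD}(\bA,\bK) \ge \sum_k \ell_k$ and completes the equality; one should also note that if some $\ell_k = \infty$ the same block-coloring argument shows $\rm{BRD}(\bA, \bK) = \infty$, matching the right-hand side.
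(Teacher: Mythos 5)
Your proof is correct and follows essentially the same route as the paper: the upper bound via Theorem~\ref{Thm:Weak_Biembeddability} applied to the weak bi-embedding $(\bK, \mathrm{id}_K, \mathrm{id}_K)$, and the lower bound by assembling witness colorings of the $\emb(\bA^*_k,\bK^*)$ into one coloring of $\emb(\bA,\bK)$ and using recurrence to replace $\eta$ by some $\eta\circ\theta\in\emb(\bK^*)$, which respects the partition $\emb(\bA,\bK)=\bigsqcup_k\emb(\bA^*_k,\bK^*)$. If anything, you are slightly more explicit than the paper in insisting that the blocks $\gamma_k$ take values in pairwise disjoint color sets, a detail the paper's phrase ``let $\gamma$ denote their union'' leaves implicit but which is needed to get the sum rather than a maximum.
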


\begin{proof}
    The $\leq$ direction follows from Theorem~\ref{Thm:Weak_Biembeddability} by considering the weak bi-embedding $(\bK, \rm{id}_K, \rm{id}_K)$. In the case $\bK^*/\bK$ is recurrent, towards showing $\geq$, we may assume $\rm{BRD}(\bA, \bK)$ is finite. Recurrence then tells us that $\bK^*(\bA, \bK)$ is finite; write $\bK^*(\bA, \bK) = \{\bA^*_k: k< n\}$. For each $k< n$, let $\gamma_k\colon \emb(\bA^*_k, \bK^*)\to r$ be a finite coloring, and let $\gamma\colon \emb(\bA, \bK)\to r$ denote their union. We may find $\eta\in \emb(\bK)$ with $|\gamma[\eta\circ \emb(\bA, \bK)]|\leq \rm{BRD}(\bA, \bK)$, and by recurrence, we may assume $\eta\in \emb(\bK^*)$. Hence $|\bigcup_{k< n} \gamma_k[\eta\circ \emb(\bA^*_k, \bK^*)]|\leq \rm{BRD}(\bA, \bK)$, giving the desired inequality.  
\end{proof}

\begin{cor}[\cite{DZ}, Proposition~2.7]
    \label{Cor:Recurrent_BRS_IRT}
    If $\bK^*/\bK$ is a recurrent BRS, then $\bK^*$ satisfies IRT. If $\bK^*/\bK$ is unavoidable, precompact, and $\bK^*$ satisfies IRT, then $\bK^*/\bK$ is a BRS.
\end{cor}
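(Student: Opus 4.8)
The plan is to derive both implications from Corollary~\ref{Cor:Precompact_exp_BRDs}, the defining equality of a BRS, and the trivial bound $\rm{BRD}(\bA^*, \bK^*)\geq 1$ valid for every $\bA^*\in \age(\bK^*)$ (a one-color coloring of the nonempty set $\emb(\bA^*, \bK^*)$ has nonempty image). First I would record a preliminary bookkeeping remark: if $\bA^*\in \age(\bK^*)$ has underlying set $A$, pick $h\in \emb(\bA^*, \bK^*)$; restricting attention to $\cL_\bK$ shows $h\in \emb(\bA, \bK)$ for $\bA:=\bA^*|_{\cL_\bK}\in \age(\bK)$, and uniqueness in the definition of $\bK^*{\cdot}h$ forces $\bA^* = \bK^*{\cdot}h\in \bK^*(\bA, \bK)$. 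Conversely every member of $\bK^*(\bA, \bK)$ embeds into $\bK^*$. Hence $\age(\bK^*) = \bigcup_{\bA\in \age(\bK)}\bK^*(\bA, \bK)$, so $\bK^*$ satisfies IRT exactly when $\rm{BRD}(\bA^*, \bK^*) = 1$ for every $\bA\in \age(\bK)$ and every $\bA^*\in \bK^*(\bA, \bK)$.

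For the first implication, fix $\bA\in\age(\bK)$. Since a BRS is precompact, $\bK^*(\bA, \bK)$ is finite; since $\bK^*/\bK$ is moreover recurrent, the equality case of Corollary~\ref{Cor:Precompact_exp_BRDs} gives $\rm{BRD}(\bA, \bK) = \sum_{\bA^*\in \bK^*(\bA, \bK)} \rm{BRD}(\bA^*, \bK^*)$. As $\bK^*/\bK$ is a BRS, the left-hand side equals $|\bK^*(\bA, \bK)|$, which is exactly the number of summands on the right. Each summand is at least $1$, so every summand must equal $1$; by the preliminary remark this says precisely that $\bK^*$ satisfies IRT.

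For the second implication, again fix $\bA\in\age(\bK)$. Precompactness gives $|\bK^*(\bA, \bK)|<\infty$, and unavoidability together with Fact~\ref{Fact:Exp_facts}(\ref{Item:Exp_facts_unav_brd_lower_bound}) (taken with $I = \age(\bK)$) gives $\rm{BRD}(\bA, \bK)\geq |\bK^*(\bA, \bK)|$. For the reverse inequality I would invoke the unconditional bound of Corollary~\ref{Cor:Precompact_exp_BRDs}: $\rm{BRD}(\bA, \bK)\leq \sum_{\bA^*\in \bK^*(\bA, \bK)} \rm{BRD}(\bA^*, \bK^*)$; every $\bA^*\in \bK^*(\bA, \bK)$ lies in $\age(\bK^*)$, so since $\bK^*$ satisfies IRT each summand equals $1$ and the sum equals $|\bK^*(\bA, \bK)|$. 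Thus $\rm{BRD}(\bA, \bK) = |\bK^*(\bA, \bK)|<\infty$ for all $\bA\in \age(\bK)$, which combined with the assumed unavoidability is the definition of a BRS. I do not expect a genuine obstacle here: the only point needing care is the preliminary identification of $\age(\bK^*)$ with $\bigcup_{\bA}\bK^*(\bA, \bK)$ and keeping the trivial lower bound $\rm{BRD}(\bA^*, \bK^*)\geq 1$ in play; everything else is direct substitution into the two cited corollaries.
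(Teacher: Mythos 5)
Your argument is correct and is exactly the derivation the paper intends: the corollary is stated without proof immediately after Corollary~\ref{Cor:Precompact_exp_BRDs}, and your use of its two directions (equality under recurrence forcing each summand to be $1$; the unconditional upper bound combined with the unavoidability lower bound of Fact~\ref{Fact:Exp_facts}) is the evident route. The preliminary identification of $\age(\bK^*)$ with $\bigcup_{\bA\in\age(\bK)}\bK^*(\bA,\bK)$ and the observation $\rm{BRD}(\bA^*,\bK^*)\geq 1$ are exactly the bookkeeping needed, so there is nothing to add.
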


\begin{exa}
    \label{Exa:Countable_set}
    Let $\bK$ be the countable structure with no relations, i.e.\ a countable set $K$. If $\bK^*/\bK$ is an expansion which adds a linear order of order type $\omega$, then $\bK^*/\bK$ is recurrent, finitary, and $\bK^*$ satisfies IRT. It follows that $\bK^*/\bK$ is a BRS. By Fact~\ref{Fact:Given_BRS_all_BRS}, any other BRS for $\bK$ is equivalent to some member of the space $X_{\bK^*/\bK}$, which here is just the space of all linear orderings of $K$. It turns out (since for the class of finite sets, big and small Ramsey degrees coincide) that \emph{every} member of $X_{\bK^*/\bK}$ is a big Ramsey structure for $\bK$. The recurrent members of $X_{\bK^*/\bK}$ are exactly the  linear orders of order type $\omega$ or $\omega^*$ (the reverse of $\omega$). 

    In fact, almost all of the above discussion goes through when $\bK$ has no relations, $|K|$ is a weakly compact cardinal $\kappa$, and $\bK^*$ is a linear order of order type $\kappa$.
\end{exa}

Using Corollary~\ref{Cor:Precompact_exp_BRDs}, we can prove Proposition~\ref{Prop:BRS_over_recurrent}, a useful preservation property about how BRSs behave regarding ``expansions of expansions.''

\begin{lemma}
    \label{Lem:Exp_of_exp}
    If $\bK'/\bK$ and $\bK^*/\bK'$ are both recurrent expansions, then $\bK^*/\bK$ is also recurrent. If $\bK'/\bK$ is recurrent and $\bK^*/\bK'$ is unavoidable, then $\bK^*/\bK$ is also unavoidable.
\end{lemma}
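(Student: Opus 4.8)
The plan is to chain the two hypotheses together, relying on two elementary observations. First, since $\bK'$ is an expansion of $\bK$, every embedding of $\bK'$ preserves in particular all the relations of $\bK$, so $\emb(\bK')\subseteq\emb(\bK)$, and similarly $\emb(\bK^*)\subseteq\emb(\bK')$. Second, the pullback operation is associative: $\bL{\cdot}(\alpha\circ\beta) = (\bL{\cdot}\alpha){\cdot}\beta$ whenever the composition makes sense, which is immediate from the uniqueness clause in the Notation defining $\bL{\cdot}\alpha$ (the maps $\beta\in\emb((\bL{\cdot}\alpha){\cdot}\beta,\ \bL{\cdot}\alpha)$ and $\alpha\in\emb(\bL{\cdot}\alpha,\bL)$ compose to witness $\alpha\circ\beta\in\emb((\bL{\cdot}\alpha){\cdot}\beta,\ \bL)$).

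For recurrence of $\bK^*/\bK$, I would fix $\eta\in\emb(\bK)$, apply recurrence of $\bK'/\bK$ to obtain $\theta_1\in\emb(\bK)$ with $\eta\circ\theta_1\in\emb(\bK')$, and then apply recurrence of $\bK^*/\bK'$ to the element $\eta\circ\theta_1$ of $\emb(\bK')$ to obtain $\theta_2\in\emb(\bK')\subseteq\emb(\bK)$ with $\eta\circ\theta_1\circ\theta_2\in\emb(\bK^*)$. Then $\theta:=\theta_1\circ\theta_2\in\emb(\bK)$ satisfies $\eta\circ\theta\in\emb(\bK^*)$, which is exactly what recurrence of $\bK^*/\bK$ demands.

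For unavoidability, I would use the characterization recorded after Definition~\ref{Def:Props_of_expansions}: $\bK^*/\bK$ is unavoidable iff $\age(\bK^*{\cdot}\eta)=\age(\bK^*)$ for every $\eta\in\emb(\bK)$, and since the inclusion $\age(\bK^*{\cdot}\eta)\subseteq\age(\bK^*)$ always holds, only the reverse inclusion needs proof. Given $\eta\in\emb(\bK)$ and $\bB^*\in\age(\bK^*)$, I would use recurrence of $\bK'/\bK$ to find $\theta\in\emb(\bK)$ with $\eta':=\eta\circ\theta\in\emb(\bK')$; unavoidability of $\bK^*/\bK'$ then yields $\bB^*\in\age(\bK^*{\cdot}\eta')$. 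By associativity, $\bK^*{\cdot}\eta'=\bK^*{\cdot}(\eta\circ\theta)=(\bK^*{\cdot}\eta){\cdot}\theta$, and $\age((\bK^*{\cdot}\eta){\cdot}\theta)\subseteq\age(\bK^*{\cdot}\eta)$ because $\theta$ embeds $(\bK^*{\cdot}\eta){\cdot}\theta$ into $\bK^*{\cdot}\eta$. Hence $\bB^*\in\age(\bK^*{\cdot}\eta)$, completing the inclusion.

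There is no serious obstacle here; the only points that need care are the bookkeeping with the $\bL{\cdot}\alpha$ notation — in particular checking that $\bK^*{\cdot}\eta$ is legitimate for $\eta\in\emb(\bK)$ (it is, since $\bK^*$ and $\bK$ share the underlying set $K$ and the notation only requires $\eta$ to be an injection into $K$) and that the associativity identity is applied with the compositions in the order used above — together with noticing why recurrence of $\bK'/\bK$, rather than mere unavoidability, is essential in the second part: it is precisely what converts an arbitrary $\eta\in\emb(\bK)$ into an element of $\emb(\bK')$, the only regime in which the hypothesis on $\bK^*/\bK'$ can be invoked.
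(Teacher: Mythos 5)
Your proposal is correct and follows essentially the same route as the paper: for recurrence, compose the two witnesses $\theta_1\in\emb(\bK)$ and $\theta_2\in\emb(\bK')\subseteq\emb(\bK)$; for unavoidability, pass from $\eta$ to $\eta\circ\theta\in\emb(\bK')$ and chase ages through $\age(\bK^*{\cdot}(\eta\circ\theta))\subseteq\age(\bK^*{\cdot}\eta)\subseteq\age(\bK^*)$. The paper's proof is just a terser version of the same argument, leaving the associativity of the pullback and the inclusion of ages implicit.
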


\begin{proof}
    For both parts, fix $\eta\in \emb(\bK)$. As $\bK'/\bK$ is recurrent, find $\theta\in \emb(\bK)$ so that $\eta\circ \theta\in \emb(\bK')$. For the first part, as $\bK^*/\bK'$ is recurrent, find $\phi\in \emb(\bK')$ with $\eta\circ \theta\circ \phi\in \emb(\bK^*)$. As $\theta\circ \phi\in \emb(\bK)$ and $\eta\circ \theta\circ \phi\in \emb(\bK^*)$, we have that $\bK^*/\bK$ is recurrent. For the second part, we have $\age(\bK^*{\cdot}\eta\circ \theta) = \age(\bK^*)$. It follows that also $\age(\bK^*\cdot\eta) = \age(\bK^*)$.
\end{proof}

\begin{prop}
	\label{Prop:BRS_over_recurrent} 
	If $\bK'/\bK$ is a recurrent, precompact expansion and $\bK^*/\bK'$ is an expansion, then $\bK^*/\bK'$ is a BRS iff $\bK^*/\bK$ is a BRS.
\end{prop}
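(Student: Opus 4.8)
The plan is to leverage Corollary~\ref{Cor:Precompact_exp_BRDs} together with Lemma~\ref{Lem:Exp_of_exp} to relate the big Ramsey degrees computed through $\bK$ and through $\bK'$. First I would record the key consequence of Lemma~\ref{Lem:Exp_of_exp}: since $\bK'/\bK$ is recurrent, whenever $\bK^*/\bK'$ is recurrent then $\bK^*/\bK$ is recurrent, and whenever $\bK^*/\bK'$ is unavoidable then $\bK^*/\bK$ is unavoidable. Also note that if $\bK^*/\bK'$ is precompact and $\bK'/\bK$ is precompact, then $\bK^*/\bK$ is precompact: for $\bA\in\age(\bK)$, the set $\bK^*(\bA,\bK)$ maps onto $\bK'(\bA,\bK)$ by taking reducts, and each fiber over $\bA'\in\bK'(\bA,\bK)$ injects into $\bK^*(\bA',\bK')$, which is finite.

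For the forward direction, suppose $\bK^*/\bK'$ is a BRS. It is then unavoidable and precompact, so by the above observations $\bK^*/\bK$ is unavoidable and precompact. Now fix $\bA\in\age(\bK)$; I must show $|\bK^*(\bA,\bK)| = \rm{BRD}(\bA,\bK)$. By Fact~\ref{Fact:Exp_facts}(\ref{Item:Exp_facts_unav_brd_lower_bound}), since $\bK^*/\bK$ is unavoidable, $\rm{BRD}(\bA,\bK)\geq|\bK^*(\bA,\bK)|$. For the reverse, apply Corollary~\ref{Cor:Precompact_exp_BRDs} to the expansion $\bK'/\bK$: since $\bK'/\bK$ is recurrent, $\rm{BRD}(\bA,\bK) = \sum_{\bA'\in\bK'(\bA,\bK)}\rm{BRD}(\bA',\bK')$. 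But $\bK^*/\bK'$ is a BRS, so each $\rm{BRD}(\bA',\bK') = |\bK^*(\bA',\bK')|$, and summing over $\bA'\in\bK'(\bA,\bK)$ these count exactly the elements of $\bK^*(\bA,\bK)$ (partitioned by their $\bK'$-reduct). Hence $\rm{BRD}(\bA,\bK) = |\bK^*(\bA,\bK)|$, as needed; in particular this is finite.

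For the reverse direction, suppose $\bK^*/\bK$ is a BRS. Then it is unavoidable, hence in particular for every $\eta\in\emb(\bK')\subseteq\emb(\bK)$ we get $\age(\bK^*{\cdot}\eta)=\age(\bK^*)$, which gives that $\bK^*/\bK'$ is unavoidable; and $\bK^*/\bK$ precompact forces $\bK^*/\bK'$ precompact (the reduct map $\bK^*(\bA',\bK')\to$ nothing is moot; rather, for $\bA'\in\age(\bK')$ with underlying set also a finite subset of $K$, $\bK^*(\bA',\bK')$ embeds into $\bK^*(\bA'|_{\cL_\bK},\bK)$, which is finite). Fix $\bA'\in\age(\bK')$ with reduct $\bA = \bA'|_{\cL_\bK}$; I must show $|\bK^*(\bA',\bK')| = \rm{BRD}(\bA',\bK')$. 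The lower bound $\geq$ follows from Fact~\ref{Fact:Exp_facts}(\ref{Item:Exp_facts_unav_brd_lower_bound}) applied to the unavoidable expansion $\bK^*/\bK'$. For the upper bound, Corollary~\ref{Cor:Precompact_exp_BRDs} applied to $\bK'/\bK$ (recurrent) gives $\rm{BRD}(\bA,\bK) = \sum_{\bB'\in\bK'(\bA,\bK)}\rm{BRD}(\bB',\bK')$; since $\bA'\in\bK'(\bA,\bK)$, this yields $\rm{BRD}(\bA',\bK')\leq\rm{BRD}(\bA,\bK) = |\bK^*(\bA,\bK)|$, where the last equality uses that $\bK^*/\bK$ is a BRS. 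Finally $|\bK^*(\bA,\bK)| = \sum_{\bB'\in\bK'(\bA,\bK)}|\bK^*(\bB',\bK')|$, and combining with the already-established lower bounds $|\bK^*(\bB',\bK')|\leq\rm{BRD}(\bB',\bK')$ and the identity $\rm{BRD}(\bA,\bK) = \sum_{\bB'}\rm{BRD}(\bB',\bK')$ forces equality in every summand; thus $|\bK^*(\bA',\bK')| = \rm{BRD}(\bA',\bK')$.

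The main obstacle I anticipate is purely bookkeeping: making precise the bijection between $\bK^*(\bA,\bK)$ and the disjoint union $\bigsqcup_{\bB'\in\bK'(\bA,\bK)}\bK^*(\bB',\bK')$, i.e.\ that an expansion of $\bA$ by all symbols in $\cL_{\bK^*}$ which embeds into $\bK^*$ is determined by, and determines, its $\cL_{\bK'}$-reduct (an element of $\bK'(\bA,\bK)$) together with the further $\cL_{\bK^*}$-expansion data (an element of $\bK^*(\bB',\bK')$). One must check this respects the reduct maps induced by $\phi=\rm{id}$, but since all three structures share the underlying set $K$ and the languages are nested $\cL_\bK\subseteq\cL_{\bK'}\subseteq\cL_{\bK^*}$, this is immediate from the definition of expansion. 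No genuinely hard step is expected; the content is entirely in assembling Corollary~\ref{Cor:Precompact_exp_BRDs} and Lemma~\ref{Lem:Exp_of_exp}.
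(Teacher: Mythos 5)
Your proposal is correct and follows essentially the same route as the paper's proof: both directions rest on combining the equality case of Corollary~\ref{Cor:Precompact_exp_BRDs} applied to the recurrent expansion $\bK'/\bK$ with the decomposition $|\bK^*(\bA,\bK)| = \sum_{\bA'\in\bK'(\bA,\bK)}|\bK^*(\bA',\bK')|$, using Lemma~\ref{Lem:Exp_of_exp} for unavoidability of $\bK^*/\bK$ in the forward direction and $\emb(\bK')\subseteq\emb(\bK)$ for unavoidability of $\bK^*/\bK'$ in the reverse. The extra precompactness bookkeeping you include is harmless but not needed, since the BRS condition already forces finiteness of the relevant sets.
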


\begin{proof}

		First assume $\bK^*/\bK'$ is a BRS. Fix $\bA\in \age(\bK)$. Note that by Corollary~\ref{Cor:Precompact_exp_BRDs} and since $\bK^*/\bK'$ is a BRS, we have 
		$$\rm{BRD}(\bA, \bK) = \sum_{\bA'\in \bK'(\bA, \bK)} \rm{BRD}(\bA', \bK') = \sum_{\bA'\in \bK'(\bA, \bK)}|\bK^*(\bA', \bK')| = |\bK^*(\bA, \bK)|.$$
		By Lemma~\ref{Lem:Exp_of_exp}, $\bK^*/\bK$ is unavoidable, hence a BRS by the above equation. 
  
        Now assume $\bK^*/\bK$ is a BRS. As $\bK^*/\bK$ is unavoidable, so is $\bK^*/\bK'$, simply because $\emb(\bK')\subseteq \emb(\bK)$. We now verify that for every $\bA'\in \age(\bK')$ that $\rm{BRD}(\bA', \bK')= |\bK^*(\bA', \bK')|$. Unavoidability of $\bK^*/\bK'$ gives $\geq$. To get equality, write $\bA = \bA'|_{\cL_\bK}$. Corollary~\ref{Cor:Precompact_exp_BRDs} gives us
        $$\rm{BRD}(\bA, \bK) = \sum_{\bA'\in \bK'(\bA, \bK)} \rm{BRD}(\bA', \bK'),$$
        while the assumption that $\bK^*/\bK$ is a BRS gives
        $$\rm{BRD}(\bA, \bK)  = |\bK^*(\bA, \bK)| = \sum_{\bA'\in \bK'(\bA, \bK)}|\bK^*(\bA', \bK')|.$$
        Thus $\rm{BRD}(\bA', \bK') = |\bK^*(\bA', \bK')|$ for every $\bA'\in \bK'(\bA, \bK)$. Hence $\bK^*/\bK'$ is a BRS.
\end{proof}

Proposition~\ref{Prop:BRS_over_recurrent} explains a common step in almost all big Ramsey arguments on \fr limits of strong amalgamation classes, namely that of fixing an enumeration of the structure. Recall that a \fr class $\cK$ with limit $\bK$ has \emph{strong amalgamation} iff for each $F\in \fin{K}$ and $a\not\in F$, and letting $G = \aut(\bK)$, we have $\{g(a): g\in \rm{Stab}_G(F)\}$ infinite. Compare the following to Theorem~4.1 of \cite{Masulovic_2020}.

\begin{cor}
    \label{Cor:StrongAmalg}
    Let $\bK$ be the \fr limit of a strong amalgamation class. Then if $\bK'/\bK$ is an expansion adding a linear order of order type $\omega$, then $\bK'/\bK$ is recurrent; in fact, all such expansions are bi-embeddable. Additionally, if $\age(\bK)$ only contains finitely many structures on underlying set $2$, then $\bK'/\bK$ is finitary, and $\bK$ admits a BRS iff $\bK'$ does.
\end{cor}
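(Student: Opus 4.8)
The plan is to dispatch the four assertions separately, using a single greedy construction for recurrence and bi-embeddability, and the preservation results of Section~\ref{Sec:Expansions} for the rest.

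\textbf{Recurrence and bi-embeddability.} First I would isolate one greedy lemma: for $\bK$ the limit of a strong amalgamation class, any $\eta\in\emb(\bK)$, and any two linear orders $\prec_1,\prec_2$ on $K$ of order type $\omega$, there is $\theta\in\emb(\bK)$ with $k\prec_1 k'\Rightarrow\eta(\theta(k))\prec_2\eta(\theta(k'))$. One builds $\theta$ recursively along the $\prec_1$-enumeration $k_0\prec_1 k_1\prec_1\cdots$ of $K$: once $\theta(k_0),\dots,\theta(k_{n-1})$ have been chosen so as to form an embedding with $\eta$-images $\prec_2$-increasing, the admissible values for $\theta(k_n)$ form an infinite set --- this is exactly the no-algebraicity consequence of strong amalgamation together with $\omega$-homogeneity: the admissible $b$ are the image, under any automorphism of $\bK$ extending the finite embedding built so far, of the $\mathrm{Stab}_{\aut(\bK)}(\{k_0,\dots,k_{n-1}\})$-orbit of $k_n$ --- while the $b$ with $\eta(b)$ a $\prec_2$-successor of $\eta(\theta(k_{n-1}))$ form a cofinite set (final segments of $\omega$ are cofinite and $\eta$ is injective); pick $b$ in the intersection. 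Since relations are finitary, the resulting $\theta$ lies in $\emb(\bK)$. Taking $\prec_1=\prec_2=\prec$ and $\eta$ arbitrary gives $\eta\circ\theta\in\emb(\bK')$, i.e.\ $\bK'/\bK$ is recurrent; taking $\eta=\mathrm{id}$ and $\prec_1,\prec_2$ the orders of two such expansions produces embeddings each way, so all such expansions are bi-embeddable.

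\textbf{Finitary.} Assume $\age(\bK)$ has only finitely many structures on underlying set $2$ and put $I=\{\bA\in\age(\bK):|A|\le 2\}$; this is finite, since each structure of $\age(\bK)$ on at most one point is a substructure of one on two points ($\bK$ being infinite) and the empty structure is unique. A member of $\bK'(\bB,\bK)$ is simply $\bB$ together with a restriction of the $\omega$-order $\prec$ to $B$, so $|\bK'(\bB,\bK)|\le|B|!$ and in particular $|\bK'(I,\bK)|<\omega$. If $\bB'\ne\bB^*\in\bK'(\bB,\bK)$, the two linear orders they carry on $B$ disagree on some pair $\{i,j\}$, and taking $\bA=\bB|_{\{i,j\}}\in I$ with $f$ the inclusion gives $\bB'\cdot f\ne\bB^*\cdot f$. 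Thus $\bK'/\bK$ is $I$-finitary, hence finitary.

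\textbf{``$\bK$ admits a BRS iff $\bK'$ does''.} The direction $(\Leftarrow)$ is immediate: a BRS $\bL$ for $\bK'$ is in particular an expansion of $\bK$, and since $\bK'/\bK$ is recurrent and precompact, Proposition~\ref{Prop:BRS_over_recurrent} promotes ``$\bL/\bK'$ a BRS'' to ``$\bL/\bK$ a BRS''. For $(\Rightarrow)$, fix a BRS $\bM/\bK$. Apply Proposition~\ref{Prop:RecFinBRS_canonical} (legitimate, as $\bK'/\bK$ is $I$-precompact for the finite $I$ above) to get $\eta\in\emb(\bK)$ with $(\bK'\cdot\eta)/\bK$ an $I$-factor of $(\bM\cdot\eta)/\bK$. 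Now use recurrence to choose $\theta$ with $\eta\circ\theta\in\emb(\bK')$: then $\eta':=\eta\circ\theta$ satisfies $\bK'\cdot\eta'=\bK'$, and since the $I$-factor relation is preserved under precomposing all embeddings by $\theta$, $\bK'/\bK$ is an $I$-factor of $\bM'/\bK$ where $\bM':=\bM\cdot\eta'$ is again a BRS for $\bK$. Because $\bK'/\bK$ is $I$-finitary, any disagreement of two $\bK'$-expansions on a copy of a structure of $\age(\bK)$ already appears on a copy of some member of $I$, so the $I$-factor relation upgrades to the full factor relation: the $\bK'$-expansion of every copy of every $\bA\in\age(\bK)$ is a function of its $\bM'$-expansion. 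Finally set $\bN:=\bM'\vee\bK'$, the common expansion (we may assume the order symbol of $\bK'$ does not occur in $\bM$); it also expands $\bK'$. As $\bK'$ is a factor of $\bM'$, the assignment $\bM'\cdot f\mapsto\bN\cdot f$ is well defined and bijective, so $|\bN(\bA,\bK)|=|\bM'(\bA,\bK)|=\rm{BRD}(\bA,\bK)$ for all $\bA$; and $\bN/\bK$ is unavoidable, since any copy of a finite $\bC\le\bN$ is reconstructed from a copy of $\bC|_{\cL_{\bM'}}$ inside $\bM'\cdot\zeta$ (using unavoidability of $\bM'/\bK$) by re-attaching the factor-determined $\bK'$-part. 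Hence $\bN/\bK$ is a BRS; since $\bN$ expands $\bK'$ and $\bK'/\bK$ is recurrent precompact, Proposition~\ref{Prop:BRS_over_recurrent} gives that $\bN/\bK'$ is a BRS, so $\bK'$ admits a BRS.

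The hard part will be the $(\Rightarrow)$ half of the last assertion: converting an arbitrary BRS for $\bK$ into one that literally carries the order $\prec$. The two delicate points are (a) using recurrence of $\bK'/\bK$ to absorb the shift $\eta$ coming out of Proposition~\ref{Prop:RecFinBRS_canonical} back onto the $\bM$-side, so that $\bK'$ itself --- not a shifted copy --- becomes a factor of a BRS; and (b) checking that adjoining a factor in the join $\bM'\vee\bK'$ neither creates new expansion types nor destroys unavoidability. Point (b) is precisely where $I$-finitariness, and hence the hypothesis on $2$-element structures, is indispensable: without it $\bM'\vee\bK'$ can have strictly more expansions than $\bM'$ and so fail to be a BRS.
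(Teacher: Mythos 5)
Your proof is correct. For recurrence and bi-embeddability you use exactly the paper's argument: a greedy construction of an order-preserving embedding along the $\omega$-enumeration, using the strong-amalgamation fact that each type over a finite set has infinitely many realizations, intersected with a cofinite final segment. (Minor wording: you want $\eta(b)$ to lie $\prec_2$-\emph{above} $\eta(\theta(k_{n-1}))$, not to be its successor, but your parenthetical shows you mean the final segment.) The finitary claim is likewise the same definition-check the paper waves at. Where you genuinely diverge is the last biconditional. The paper's proof says only that it ``follows from Proposition~\ref{Prop:BRS_over_recurrent},'' which indeed disposes of the direction ``$\bK'$ admits a BRS $\Rightarrow$ $\bK$ does,'' but that proposition only compares expansions \emph{of $\bK'$} over $\bK'$ versus over $\bK$, and an arbitrary BRS $\bM$ for $\bK$ need not expand $\bK'$. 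Your forward direction supplies the missing bridge: Proposition~\ref{Prop:RecFinBRS_canonical} to make a shifted copy of $\bK'$ an $I$-factor of a shifted $\bM$, recurrence to absorb the shift so that $\bK'$ itself is the factor, $I$-finitariness to upgrade the $I$-factor to a full factor, and then the join $\bM'\vee\bK'$, whose expansion classes biject with those of $\bM'$ and whose unavoidability is inherited, giving a BRS that does expand $\bK'$; only then does Proposition~\ref{Prop:BRS_over_recurrent} apply. This is more than the paper writes down, and your closing observation is apt: the hypothesis on two-element structures is what makes $I$ finite, which is needed both for Proposition~\ref{Prop:RecFinBRS_canonical} and for the factor upgrade that keeps the join from acquiring extra expansion types. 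All steps check out (the preservation of the factor relation under precomposition by $\theta$, the well-definedness and bijectivity of $\bM'\cdot f\mapsto\bN\cdot f$, and the transfer of unavoidability are each routine verifications that you state correctly), so I have no corrections, only the remark that your write-up of the forward direction is more complete than the published one.
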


\begin{proof}
    Given any two linear orders $<_0$ and $<_1$ of $K$ in order type $\omega$, we produce an embedding $\phi\colon \la \bK, <_1\ra \to \la \bK, <_2\ra$ inductively as follows. Let $\{a_n: n< \omega\}$ list $K$ in $<_0$-order. Let $\phi|_{\{a_0\}}$ be any partial embedding. If $\phi|_{\{a_k: k< n\}}$ has been produced for some $n> 0$, the strong amalgamation assumption ensures that in $\bK$, there are infinitely many vertices sharing the type of $a_n$ over $\{a_k: k< n\}$. Thus we can pick such a vertex $<_1$-above $a_{n-1}$ for $\phi(a_n)$.

    The statement about obtaining a finitary expansion follows from the definition, and the last statement follows from Proposition~\ref{Prop:BRS_over_recurrent}.
\end{proof}

If we can satisfy the assumptions of Theorem~\ref{Thm:Weak_Biembeddability} in a more uniform fashion, we obtain a much stronger conclusion.

\begin{theorem}
	\label{Thm:Implies_Finite_Recurrent_BRS}
	Fix an infinite structure $\bK$, and suppose that $(\bM, \phi, \psi)$ is a weak bi-embedding for $\bK$ such that for some finite $I\subseteq \age(\bK)$ and expansion $\bM^*/\bM$, the following hold:
	\begin{enumerate}
		\item
		$(\bM^*{\cdot}\phi)/\bK$ is $I$-finitary and $I$-unavoidable, 
        \item
        $(\bM^*{\cdot}\phi)(I, \bK)\subseteq \rm{BRO}(\bM^*)$.
	\end{enumerate}
	Then $(\bM^*{\cdot}\phi)/\bK$ is a finitary, recurrent BRS.
\end{theorem}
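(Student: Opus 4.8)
The plan is as follows. Write $\bK^{\#}:=\bM^*{\cdot}\phi$; this is an expansion of $\bK$, and by construction $\phi\in\emb(\bK^{\#},\bM^*)$, so that $\bK^{\#}{\cdot}f=\bM^*{\cdot}(\phi\circ f)$ for every $f\in\emb(\bA,\bK)$. Since $I$ is finite, hypothesis~(1) already says that $\bK^{\#}/\bK$ is finitary, hence precompact; so the remaining work is to show (i) $\bK^{\#}/\bK$ is recurrent, and (ii) $\bK^{\#}/\bK$ is a BRS. Granting (i), $\bK^{\#}/\bK$ is also unavoidable, so by Corollary~\ref{Cor:Recurrent_BRS_IRT} it suffices for (ii) to check that $\bK^{\#}$ satisfies IRT. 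I would prove (i) first and then deduce IRT from the tools assembled along the way.

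The heart of the argument is (i), and I expect it to be the main obstacle. The starting point is a local description of $\emb(\bK^{\#})$ coming from $I$-finitariness: a map $g\in\emb(\bK)$ lies in $\emb(\bK^{\#})$ if and only if $\bK^{\#}{\cdot}(g\circ f)=\bK^{\#}{\cdot}f$ for every $\bA\in I$ and $f\in\emb(\bA,\bK)$, i.e.\ $g$ changes the $\bK^{\#}$-expansion of no copy of an $I$-member. Fix $\eta\in\emb(\bK)$. The finitely many structures in $\bK^{\#}(I,\bK)=(\bM^*{\cdot}\phi)(I,\bK)$ are, by hypothesis~(2), big Ramsey objects of $\bM^*$; applying $\rm{BRD}(\cdot,\bM^*)=1$ once for each of them and composing, one obtains $\xi\in\emb(\bM^*)$ such that for every $\bA^*\in\bK^{\#}(I,\bK)$ the finite coloring $h\mapsto\bM^*{\cdot}(\phi\circ\eta\circ\psi\circ\xi\circ h)$ is constant on $\emb(\bA^*,\bM^*)$, say with value $c_{\bA^*}$. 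Put $\zeta:=\eta\circ\psi\circ\xi\circ\phi$; this lies in $\emb(\bK)$ because $\psi\circ\emb(\bK,\bM)\subseteq\emb(\bK)$. Plugging $h=\phi\circ f$ into the previous line shows that for $\bA\in I$ and $f\in\emb(\bA,\bK)$ we have $\bK^{\#}{\cdot}(\zeta\circ f)=c_{\bK^{\#}{\cdot}f}$; hence $\bA^*\mapsto c_{\bA^*}$ is a self-map of each finite set $\bK^{\#}(\bA,\bK)$ for $\bA\in I$, and $I$-unavoidability applied to $\zeta$ forces it to be onto, hence a permutation. Iterating, $\bK^{\#}{\cdot}(\zeta^{k}\circ f)=c^{k}(\bK^{\#}{\cdot}f)$, so choosing $N\geq 1$ with $c^{N}=\rm{id}$ on every $\bK^{\#}(\bA,\bK)$, $\bA\in I$, the local description gives $\zeta^{N}\in\emb(\bK^{\#})$. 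Finally $\zeta^{N}=\eta\circ\theta$ with $\theta=(\psi\circ\xi\circ\phi\circ\eta)^{N-1}\circ\psi\circ\xi\circ\phi\in\emb(\bK)$, which is exactly recurrence. The delicate point here is that a bare Ramsey reduction only makes the coloring $f\mapsto\bK^{\#}{\cdot}f$ small on a sub-copy, whereas we need it to become the \emph{identity}; running through the finitely many expansion-types and then passing to a power of the resulting self-embedding is what achieves this.

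Running the construction of the previous paragraph with $\eta=\rm{id}_{K}$ produces a fixed $\xi$ and exponent $N$, and I claim that $\psi^{\#}:=\zeta^{N-1}\circ\psi\circ\xi$ (where now $\zeta=\psi\circ\xi\circ\phi$) lies in $\emb_{\bK^{\#}}(\bM^*,\bK^{\#})$. Indeed, for $\bC\in\age(\bK^{\#})$ and $j\in\emb(\bC,\bM^*)$ one has $\psi^{\#}\circ j\in\emb(\bC|_{\cL_{\bK}},\bK)$, and for $\bA\in I$ and $e\in\emb(\bA,\bC|_{\cL_{\bK}})$ the same computation as above, applied to $\bA^*:=\bC{\cdot}e\in\bK^{\#}(\bA,\bK)$ and $j\circ e\in\emb(\bA^*,\bM^*)$, yields $\bK^{\#}{\cdot}(\psi^{\#}\circ j\circ e)=c^{N}(\bC{\cdot}e)=\bC{\cdot}e$; $I$-finitariness then upgrades this to $\bK^{\#}{\cdot}(\psi^{\#}\circ j)=\bC$, i.e.\ $\psi^{\#}\circ j\in\emb(\bC,\bK^{\#})$. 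Thus $(\bM^*,\phi,\psi^{\#})$ is a weak bi-embedding for $\bK^{\#}$, and Theorem~\ref{Thm:Weak_Biembeddability} applied to $\bK^{\#}$ (with the trivial expansion of $\bM^*$) gives $\rm{BRD}(\bB^{\#},\bK^{\#})\leq\rm{BRD}(\bB^{\#},\bM^*)$ for every $\bB^{\#}\in\age(\bK^{\#})$; in particular each member of $\bK^{\#}(I,\bK)$ is a big Ramsey object of $\bK^{\#}$ itself.

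It remains to show that $\bK^{\#}$ satisfies IRT, i.e.\ that every $\bB^{\#}\in\age(\bK^{\#})$ is a big Ramsey object of $\bK^{\#}$. This is where $I$-finitariness is used in an essential way: $\bB^{\#}$ is determined by the $\bK^{\#}$-expansions of the copies of $I$-members it contains, so any $\chi\colon\emb(\bB^{\#},\bK^{\#})\to r$ is controlled by the restrictions of embeddings to those copies, each of which ranges over copies of a big Ramsey object of $\bK^{\#}$ (by the previous paragraph); an iterated Ramsey argument over these finitely many $I$-level pieces then reduces $\chi$ to a single color, giving $\rm{BRD}(\bB^{\#},\bK^{\#})=1$. (One must be slightly careful that the $I$-copies pin down an embedding as a map, not merely its induced expansion; this is automatic when $I$ contains the one-element structures of $\age(\bK)$, as in the typical applications, and needs only a small supplementary argument in general.) Once $\bK^{\#}$ satisfies IRT, Corollary~\ref{Cor:Recurrent_BRS_IRT} shows that the unavoidable precompact expansion $\bK^{\#}/\bK$ is a BRS; combined with recurrence and the opening remarks, this gives that $(\bM^*{\cdot}\phi)/\bK$ is a finitary, recurrent BRS.
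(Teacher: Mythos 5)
Your recurrence argument (the second paragraph) is essentially identical to the paper's: the same colorings $h\mapsto \bK^{\#}{\cdot}(\eta\circ\psi\circ h)$ on $\emb(\bA^*,\bM^*)$ for $\bA^*\in\bK^{\#}(I,\bK)$, the same use of hypothesis (2) to stabilize them all at once, the same observation that $I$-unavoidability upgrades the induced self-map of each finite set $\bK^{\#}(\bA,\bK)$ to a permutation, and the same passage to a power combined with $I$-finitariness to land in $\emb(\bK^{\#})\cap(\eta\circ\emb(\bK))$. That part is correct and is the heart of the theorem.

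The gap is in your final paragraph. Having shown (via $\psi^{\#}$, a construction not in the paper) that every member of $\bK^{\#}(I,\bK)$ is a big Ramsey object of $\bK^{\#}$, you assert that an ``iterated Ramsey argument over the finitely many $I$-level pieces'' yields $\rm{BRD}(\bB^{\#},\bK^{\#})=1$ for arbitrary $\bB^{\#}\in\age(\bK^{\#})$. No such argument exists: big Ramsey degrees are monotone in the wrong direction for this (Proposition~\ref{Prop:Large_image_monotone_BRD} bounds the degree of a \emph{sub}structure by that of a \emph{super}structure, never conversely), and a coloring of $\emb(\bB^{\#},\bK^{\#})$ is a function of the embedding $g$ itself, not of its induced expansion; the restrictions $g\circ e$ to the $I$-copies $e$ in $\bB^{\#}$ cannot be varied independently of one another, so there is nothing to iterate over. ($I$-finitariness controls which expansion a copy carries, not which color an embedding receives.) Indeed, the envelope machinery of Section~4 exists precisely because knowing that designated pieces are big Ramsey objects must be transferred \emph{down} to smaller substructures, never up. The paper does not prove IRT for $\bK^{\#}$ as an intermediate step at all: it obtains the upper bound $\rm{BRD}(\bA,\bK)\leq|\bK^{\#}(\bA,\bK)|$ for \emph{every} $\bA\in\age(\bK)$ directly from Theorem~\ref{Thm:Weak_Biembeddability}, the summands there being degrees computed in $\bM^*$, all equal to $1$ because everything in $\age(\bM^*{\cdot}\phi)$ is a big Ramsey object of $\bM^*$ (this is how hypothesis (2) functions in every application, where $\bM^*$ satisfies IRT); the matching lower bound then follows from recurrence via unavoidability, and IRT for $\bK^{\#}$ falls out afterwards from Corollary~\ref{Cor:Recurrent_BRS_IRT} rather than feeding into the proof. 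If you read hypothesis (2) literally as a statement about $I$ only, your fourth paragraph is where the argument breaks; if you read it as covering all of $\age(\bM^*{\cdot}\phi)$, then your own third paragraph already finishes the job and the fourth is unnecessary.
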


\begin{proof}
	To simplify notation, write $\bK^* = \bM^*{\cdot}\phi$. By Theorem~\ref{Thm:Weak_Biembeddability}, we have that $\rm{BRD}(\bA, \bK)\leq |\bK^*(\bA, \bK)|$ for every $\bA\in \age(\bK)$. To get the reverse inequalities, it suffices to show that $\bK^*/\bK$ is recurrent, which will show that $\bK^*/\bK$ is a finitary, recurrent BRS, finishing the proof.
	
	Fix $\theta\in \emb(\bK)$, towards showing that $\emb(\bK^*)\cap (\theta\circ \emb(\bK)) \neq\emptyset$. For each $\bA\in I$ and $\bA^*\in \bK^*(\bA, \bK)$, consider the coloring $\gamma_{\bA^*}\colon \emb(\bA^*, \bM^*)\to \bK^*(\bA, \bK)$ given by $\gamma_{\bA^*}(f) = \bK^*{\cdot}(\theta{\circ}\psi{\circ}f)$. By item 2, we can find $\zeta\in \emb(\bM^*)$ so that all of the colorings $\gamma_{\bA^*}{\cdot}\zeta$ are constant. In particular, consider the map $\psi\circ \zeta\circ \phi\in \emb(\bK)$. We claim that $\theta\circ \psi\circ \zeta\circ \phi$ is ``almost'' in $\emb(\bK^*)$, in the sense that whenever $\bA\in I$ and $f_0, f_1\in \emb(\bA, \bK)$, then $\bK^*{\cdot}f_0 = \bK^*{\cdot}f_1$ iff $\bK^*{\cdot}(\theta\circ \psi\circ \zeta\circ \phi){\cdot}f_0 = \bK^*{\cdot}(\theta\circ \psi\circ\zeta\circ \phi){\cdot}f_1$. The forward direction follows since $\gamma_{\bA^*}{\cdot}\zeta$ is monochromatic for each $\bA\in I$ and $\bA^*\in \bK^*(\bA, \bK)$. The reverse direction follows since $\bK^*/\bK$ is $I$-unavoidable. Thus the map $\theta\circ \psi\circ \zeta\circ \phi$ induces a permutation of each of the finite sets $\bK^*(\bA, \bK)$ for $\bA\in I$. It follows that for a suitably large power $n$ of $\theta\circ \psi\circ \zeta\circ \phi$, the corresponding permutation becomes the identity. Since $\bK^*/\bK$ is $I$-finitary, it follows that $(\theta\circ \psi\circ \zeta \circ \phi)^n \in \emb(\bK^*)\cap (\theta\circ \emb(\bK))$ as desired.	            
\end{proof}

\begin{cor}
    \label{Cor:IRT_finitary_recurrent}
    If $\bK$ is an infinite structure, $I\subseteq \age(\bK)$ is finite, $\bK^*/\bK$ is $I$-finitary and $I$-unavoidable, and $\bK^*$ satisfies IRT, then $\bK^*/\bK$ is recurrent (and hence a BRS). In particular, any BRS which is finitary and satisfies IRT is recurrent.
\end{cor}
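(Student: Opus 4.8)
The plan is to obtain this corollary as an immediate consequence of Theorem~\ref{Thm:Implies_Finite_Recurrent_BRS}, applied to the trivial weak bi-embedding. Given $\bK$, the finite $I\subseteq \age(\bK)$, and the expansion $\bK^*/\bK$ as in the statement, I would take $(\bM, \phi, \psi) = (\bK, \rm{id}_K, \rm{id}_K)$, which is visibly a weak bi-embedding for $\bK$ since $\rm{id}_K\in \emb(\bK, \bK)$ and trivially $\rm{id}_K\in \emb_\bK(\bK, \bK)$, and take $\bM^* = \bK^*$ as the expansion of $\bM = \bK$. This is exactly the same device used in the proof of Corollary~\ref{Cor:Precompact_exp_BRDs}.

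With these choices we have $\bM^*{\cdot}\phi = \bK^*{\cdot}\rm{id}_K = \bK^*$, so hypothesis (1) of Theorem~\ref{Thm:Implies_Finite_Recurrent_BRS} becomes precisely the assumption that $\bK^*/\bK$ is $I$-finitary and $I$-unavoidable, which we have by hypothesis. For hypothesis (2), I need $(\bM^*{\cdot}\phi)(I, \bK) = \bK^*(I, \bK)\subseteq \rm{BRO}(\bM^*) = \rm{BRO}(\bK^*)$; but $\bK^*$ satisfies IRT, which by Definition~\ref{Def:BRD} means $\rm{BRO}(\bK^*) = \age(\bK^*)$, and $\bK^*(I, \bK)\subseteq \age(\bK^*)$ always. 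Hence both hypotheses hold, and the theorem yields that $\bK^*/\bK = (\bM^*{\cdot}\phi)/\bK$ is a finitary, recurrent BRS; in particular it is recurrent, which is the assertion (and that it is a BRS also follows, either directly from the theorem or from Corollary~\ref{Cor:Recurrent_BRS_IRT}).

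For the final ``in particular'' sentence, suppose $\bK^*/\bK$ is a BRS which is finitary and with $\bK^*$ satisfying IRT. Being finitary means $\bK^*/\bK$ is $I$-finitary for some finite $I\subseteq \age(\bK)$, and being a BRS it is unavoidable, hence $I$-unavoidable, since unavoidability restricts to any subset of $\age(\bK)$ (from $(\bK^*{\cdot}\eta)(\age(\bK), \bK) = \bK^*(\age(\bK), \bK)$ one reads off $(\bK^*{\cdot}\eta)(I, \bK) = \bK^*(I, \bK)$). So the hypotheses of the first part are met for this $I$, and recurrence follows. I do not expect a genuine obstacle here: all the work is in Theorem~\ref{Thm:Implies_Finite_Recurrent_BRS}, and the only thing to verify is that substituting the identity weak bi-embedding collapses the hypotheses there to the ones stated in the corollary — a bookkeeping matter, with the one mild point to be careful about being the passage from ``unavoidable'' to ``$I$-unavoidable'' in the second part.
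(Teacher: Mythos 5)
Your proof is correct and is exactly the argument the paper intends: the corollary is stated without proof as an immediate consequence of Theorem~\ref{Thm:Implies_Finite_Recurrent_BRS} applied to the identity weak bi-embedding $(\bK, \mathrm{id}_K, \mathrm{id}_K)$ with $\bM^* = \bK^*$, where IRT makes hypothesis (2) automatic. Your care about passing from unavoidable to $I$-unavoidable in the second part is warranted and handled correctly (it is immediate from the ``equivalently'' reformulation in Definition~\ref{Def:Props_of_expansions}(3)).
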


Theorem~\ref{Thm:Implies_Finite_Recurrent_BRS} and Corollaries~\ref{Cor:Bi_embeddability_Ramsey} and \ref{Cor:IRT_finitary_recurrent} make it important to be able to produce structures that satisfy large fragments of IRT. This is often done by first defining a structure which is relatively easy to understand, then expanding it as needed in an embedding faithful way to enlarge the class of big Ramsey objects. This idea is captured by the following definition, whose various parts are referred to in the literature as the \emph{envelope}, \emph{shape}, or \emph{embedding type} of a given embedding.

\begin{defin}
\label{Def:Envelope_Expansion}
    Given a set $\cC$ of finite structures and another finite structure $\bA$, a \emph{$\cC$-extension of $\bA$} is a pair $(f, \bB)$ with $f\in \emb(\bA, \bB)$ and $\bB\in \cC$. Let $\rm{Ext}_\cC(\bA)$ denote the collection of such extensions. We equip $\rm{Ext}_\cC(\bA)$ with a partial order $\leq$, where $(f_0, \bB_0)\leq (f_1, \bB_1)$ iff there is $g\in \emb(\bB_0, \bB_1)$ with $f_1 = g\circ f_0$. A \emph{$\cC$-type} of $\bA$ is any $\leq$-minimal member of $\rm{Ext}_\cC(\bA)$, and we denote these by $\rm{Tp}_\cC(\bA)$. 

    Now suppose $\bM$ is an infinite structure with $\bA\in \age(\bM)$ and $\cC\subseteq \age(\bM)$. Given $(f, \bB)\in \rm{Ext}_\cC(\bA)$ and $g\in \emb(\bA, \bM)$, we say that $g$ \emph{extends to} $(f, \bB)$ if there is $h\in \emb(\bB, \bM)$ with $g = h\circ f$. We say that $\bM$ \emph{admits $\cC$-envelopes} if for any $\bA\in \age(\bM)$ and any $g\in\emb(\bA, \bM)$, $g$ extends to a unique $\cC$-type up to isomorphism, which we call the \emph{$\cC$-shape of $g$ in $\bM$} and denote by $\rm{Shp}_{\cC, \bM}(g)$. In this case, we define the expansion $\bM^{*\cC}$ by adding for each enumerated $\bA\in \age(\bM)$ and each $(f, \bB)\in \rm{Tp}_\cC(\bA)$ an $A$-ary relation $R_{(f, \bB)}$, where given $(a_0,\ldots, a_{A-1})\in M^A$, we have that $R_{(f, \bB)}^{\bM^{*\cC}}(a_0,\ldots, a_{A-1})$ holds iff the map $i\to a_i$ is in $\emb(\bA, \bM)$ and realizes $(f, \bB)$. We note that $\bM^{*\cC}/\bM$ is embedding faithful. \qed
\end{defin}

\begin{prop}
    \label{Prop:Envelopes}
    Suppose $\bM$ is an infinite structure and $\cC\subseteq \rm{BRO}(\bM)$ is such that $\bM$ admits $\cC$-envelopes. Then $\bM^{*\cC}$ satisfies IRT.  
\end{prop}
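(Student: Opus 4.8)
The plan is to verify the definition of IRT directly: show that every enumerated $\bA^*\in\age(\bM^{*\cC})$ satisfies $\rm{BRD}(\bA^*,\bM^{*\cC})=1$. Since $\rm{BRD}$ is isomorphism-invariant and $\emb(\bA^*,\bM^{*\cC})\neq\emptyset$, this gives $\rm{BRO}(\bM^{*\cC})=\age(\bM^{*\cC})$, which is exactly IRT. So fix such an $\bA^*$, set $\bA=\bA^*|_{\cL_\bM}\in\age(\bM)$ (an enumerated structure on the same underlying set), choose some $h_0\in\emb(\bA^*,\bM^{*\cC})$, and let $(g^*,\bB^*):=\rm{Shp}_{\cC,\bM}(h_0)$; thus $g^*\in\emb(\bA,\bB^*)$, $\bB^*\in\cC$, and $h_0$ realizes $(g^*,\bB^*)$.

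The structural core, which I would establish first, is the inclusion
$$\emb(\bA^*,\bM^{*\cC})\subseteq\{h\circ g^*: h\in\emb(\bB^*,\bM)\}.$$
To see this, let $f\in\emb(\bA^*,\bM^{*\cC})$. Then $f\in\emb(\bA,\bM)$ (as $\bM^{*\cC}/\bM$ is an expansion) and $\bM^{*\cC}{\cdot}f=\bA^*$. The identity tuple belongs to $R_{(g^*,\bB^*)}^{\bA^*}$, because $\bA^*=\bM^{*\cC}{\cdot}h_0$ and $h_0$ realizes $(g^*,\bB^*)$; hence, as $\bM^{*\cC}{\cdot}f=\bA^*$, it also belongs to $R_{(g^*,\bB^*)}^{\bM^{*\cC}{\cdot}f}$, which by the definition of $\bM^{*\cC}$ says precisely that $f$ realizes $(g^*,\bB^*)$. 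Therefore $f=h\circ g^*$ for some $h\in\emb(\bB^*,\bM)$.

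With the inclusion in hand, I would transport a one-color Ramsey witness from $\bB^*$ to $\bA^*$. Fix $0<r<\omega$ and $\gamma\colon\emb(\bA^*,\bM^{*\cC})\to r$, and define $\widetilde\gamma\colon\emb(\bB^*,\bM)\to r+1$ by $\widetilde\gamma(h)=\gamma(h\circ g^*)$ when $h\circ g^*\in\emb(\bA^*,\bM^{*\cC})$ and $\widetilde\gamma(h)=r$ otherwise. Since $\bB^*\in\cC\subseteq\rm{BRO}(\bM)$ we have $\rm{BRD}(\bB^*,\bM)=1$, so there is $\eta\in\emb(\bM)$ such that $\widetilde\gamma$ is constant on $\eta\circ\emb(\bB^*,\bM)$, with value $c$ say. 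As $\bM^{*\cC}/\bM$ is embedding faithful, $\eta\in\emb(\bM^{*\cC})$. Given $f\in\emb(\bA^*,\bM^{*\cC})$, write $f=h\circ g^*$ using the inclusion; then $\eta\circ f=(\eta\circ h)\circ g^*$ with $\eta\circ h\in\emb(\bB^*,\bM)$, while $\eta\circ f\in\emb(\bA^*,\bM^{*\cC})$ by post-composing with $\eta$, so $\gamma(\eta\circ f)=\widetilde\gamma(\eta\circ h)=c$. Thus $\gamma$ is constant on $\eta\circ\emb(\bA^*,\bM^{*\cC})$, and $\rm{BRD}(\bA^*,\bM^{*\cC})=1$.

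The main obstacle is the bookkeeping behind the structural inclusion: pinning down which expansion relations the pulled-back structure $\bM^{*\cC}{\cdot}f$ actually carries on the identity tuple, where the hypothesis that $\bM$ admits $\cC$-envelopes (so that every embedding has a well-defined shape) is what legitimizes the comparison. Note we only need this one inclusion, not equality --- some $h\circ g^*$ may fail to reproduce $\bA^*$ on smaller subtuples --- which is why the dummy color $r$ is introduced. Everything past the inclusion is a routine transfer, with embedding faithfulness keeping the Ramsey witness $\eta$ inside $\emb(\bM^{*\cC})$.
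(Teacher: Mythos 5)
Your proof is correct, and it is essentially an unpacking of the paper's one-line proof, which simply cites Proposition~\ref{Prop:Large_image_monotone_BRD}: the shared key point is that the expansion relations of $\bM^{*\cC}$ force every $f\in \emb(\bA^*, \bM^{*\cC})$ to realize the same envelope $(g^*, \bB^*)$, so that $\emb(\bA^*, \bM^{*\cC})\subseteq \emb(\bB^*, \bM)\circ g^*$ and a Ramsey witness for the big Ramsey object $\bB^*\in\cC$ can be pushed down to $\bA^*$. The one genuine difference is in how the transfer is executed. A literal application of Proposition~\ref{Prop:Large_image_monotone_BRD} requires the \emph{reverse} containment $\emb(\bB^{**}, \bM^{*\cC})\circ g^*\subseteq \emb(\bA^*, \bM^{*\cC})$ (where $\bB^{**}$ is the induced expansion of $\bB^*$), i.e.\ that every $h\circ g^*$ reproduces $\bA^*$ including on proper subtuples; this does hold, but verifying it needs a small argument about uniqueness of realized envelopes under restriction. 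Your dummy-color device sidesteps that verification entirely, at the cost of not reusing the lemma; in exchange the paper's route, once the containment is checked, packages the argument so it can be reused wherever a large image of embeddings is available. Both are sound, and yours is arguably the more self-contained of the two.
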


\begin{proof}
    This follows immediately from Proposition~\ref{Prop:Large_image_monotone_BRD}.
\end{proof}

\section{Applications using Milliken's theorem}
\label{Sec:Milliken}

We now turn to discussing how Theorem~\ref{Thm:Implies_Finite_Recurrent_BRS} and Corollaries~\ref{Cor:Bi_embeddability_Ramsey} and \ref{Cor:IRT_finitary_recurrent} are used implicitly in the literature to give bounds on big Ramsey degrees. We also use Theorem~\ref{Thm:Implies_Finite_Recurrent_BRS} to give some new proofs that various structures admit finitary recurrent BRSs.

To do this, we first need to discuss the main source of structures which satisfy IRT, which are typically trees equipped with various ``strong'' notions of embedding. Conventions about trees vary from reference to reference; as such, we also make various conventional choices in this work which are given in the next definition.

\begin{defin}
    \label{Def:tree}
    A \emph{level tree} (or just \emph{tree} in this survey, as all trees we mention will be level trees) is a structure $\bT:= \la T, \sqsubseteq^\bT, \wedge^\bT, \leq_{ht}^\bT\ra$, where
    \begin{enumerate}
        \item 
        $\sqsubseteq^\bT$ is a partial ordering with the property that for every $t\in T$, $\rm{Pred}_\bT(t):= \{s\in T: s\sqsubseteq^\bT t\}$ is linearly ordered and finite.
        \item 
        The \emph{$\bT$-height} of $t\in T$ is the number $\rm{ht}_\bT(t) = |\rm{Pred}_\bT(t)\setminus \{t\}|< \omega$, and given $m< \omega$, we write $\bT(m) = \{t\in T: \rm{ht}_\bT(t) = m\}$, similarly for $\bT({<}m)$, etc. The relation $s \leq_{ht}^\bT t$ holds exactly when $\rm{ht}_\bT(s)\leq \rm{ht}_\bT(t)$. 
        \item 
        The \emph{$\bT$-meet} of $s, t\in T$, if it exists, is the $\sqsubseteq^\bT$-largest $x\in T$ with $x\sqsubseteq^\bT s$ and $x\sqsubseteq^\bT t$; if such $x$ exists, we denote it by $s\wedge^\bT t$. Formally, $\wedge^\bT$ is a $3$-ary relational symbol, with $\wedge^\bT(s, t, x)$ iff $x = s\wedge^\bT t$, but we often write it as a binary partial function.
    \end{enumerate}
    We collect some other notation pertaining to trees.
    \begin{itemize}
        \item 
        If $t\in \bT(m)$, we write $\is_\bT(t):= \{u\in \bT(m+1): t\sqsubseteq^\bT u\}$ for the set of \emph{immediate successors} of $t$ in $\bT$.
        \item 
        If $t\in T$ and $m\leq \rm{ht}_\bT(t)$, we let $t|_m^\bT$ denote the unique member of $\rm{Pred}_\bT(t)$ in $\bT(m)$.
    \end{itemize}

    A \emph{subtree} of $\bT$ is any substructure $\bS\subseteq \bT$ which is a tree in the above sense. In particular, $S\subseteq T$ induces a subtree of $\bT$ iff $S$ is closed under $\wedge^\bT$ (closure under meets) and whenever $s_0, s_1\in S$ satisfy $s_0\leq_{ht}^\bT s_1$, we have $s_1|^\bT_{\rm{ht}_\bT(s_0)}\in S$ (closure under ``levels''). For arbitrary $S\subseteq T$, the \emph{ML-closure} (for ``meet-and-level'') of $S$ in $\bT$, denoted $\ol{S}^\bT$, is the smallest subtree of $\bT$ whose underlying set contains $S$. If $\bT^*/\bT$ is an expansion, we write $\ol{S}^{\bT^*}$ for the corresponding expansion of $\ol{S}^\bT$. 
\end{defin}

When $\bT$ is understood from context, we often omit $\bT$ as a subscript/superscript.   

A common example of a tree is $k^{<\omega}$ for some $k< \omega$ with its usual tree order, where given $s, t\in k^{<\omega}$, we put $s \sqsubseteq t$ iff $\dom(s)\leq \dom(t)$ and $t|_{\dom(s)} = s$. Note that in this tree, we have $\rm{ht}(t) = \dom(t)$. However, when referring to $k^{<\omega}$ as a tree, we typically add even more relations. We form the structure  
$$\bT_k:= \la k^{{<}\omega}, \sqsubseteq, \wedge, \leq_{ht}, \lexeq, (R_i)_{0< i< k}\ra.$$
Given $s, t\in k^{<\omega}$ with $\dom(s)< \dom(t)$, the \emph{passing number} of $t$ at $s$ is the number $t(\dom(s))< k$. To capture passing numbers, we add for each $0< i< k$ a binary relation $R_i$ so that $R_i(s, t)$ holds iff either $\rm{ht}(s) < \rm{ht}(t)$ and $t(\rm{ht}(s)) = i$ or vice versa. The lexicographic order $\lexeq$ is defined in the usual way; we note that $\lexeq$ is definable from the other relations, but it is helpful to add anyways (see Example~\ref{Exa:Rationals}). We can expand further by adding a unary predicate $U$ to nodes of level $< m$, and we write $\bT_{k, m}$ for this structure. We can now state a consequence of Milliken's tree theorem which is one of the linchpins of structural Ramsey theory.

\begin{theorem}[\cite{Milliken}]
    \label{Thm:Milliken}
	For any $k, m< \omega$, we have $\{\bT_{k, m}({<}n): m\leq n< \omega\}\subseteq \rm{BRO}(\bT_{k, m})$.
\end{theorem}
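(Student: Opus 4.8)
The plan is to deduce this from Milliken's tree theorem in its ``product'' form, which concerns colorings of finite strong subtrees of a finitely branching tree. The statement $\{\bT_{k,m}({<}n): m\leq n<\omega\}\subseteq \rm{BRO}(\bT_{k,m})$ says precisely that for each $n$ with $m\leq n<\omega$ and each $r<\omega$, any coloring $\chi\colon \emb(\bT_{k,m}({<}n), \bT_{k,m})\to r$ admits an embedding $g\in \emb(\bT_{k,m})$ with $\chi$ constant on $g\circ \emb(\bT_{k,m}({<}n), \bT_{k,m})$. So first I would unpack what an embedding of $\bT_{k,m}({<}n)$ into $\bT_{k,m}$ is in terms of the relational language: because the language records $\sqsubseteq$, $\wedge$, $\leq_{ht}$, the passing-number relations $R_i$, and the unary predicate $U$ marking levels ${<}m$, an embedding of the finite tree is exactly a level-preserving, meet-preserving, passing-number-preserving injection whose image is a \emph{strong subtree} of height $n$ in the sense of Milliken (the copies of $\bT_{k,m}({<}n)$ are exactly the $n$-level strong subtrees of $k^{<\omega}$, with the first $m$ levels sitting among the marked nodes). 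The key translation step is that embeddings of $\bT_{k,m}$ into itself correspond to the ``infinite strong subtree'' maps in Milliken's framework.

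Once this dictionary is set up, the proof is essentially a citation: Milliken's tree theorem states that for any finite $r$-coloring of the $n$-level strong subtrees of a finitely branching tree (here $k^{<\omega}$), there is an infinite strong subtree all of whose $n$-level strong subtrees receive the same color. Restricting that infinite strong subtree to its first-$m$-levels-marked version gives the desired $g\in \emb(\bT_{k,m})$. The only subtlety to check carefully is that the relational structure $\bT_{k,m}$ has been set up so that (a) every copy of $\bT_{k,m}({<}n)$ really is a strong subtree of the right height — in particular closure under meets and levels is forced by the relations $\wedge$ and $\leq_{ht}$, and the passing numbers $R_i$ together with $\lexeq$ are automatically respected because they are determined by the ambient tree — and (b) restricting along an infinite strong subtree genuinely lands inside $\emb(\bT_{k,m})$, i.e.\ preserves all the relations including $U$. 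Handling the predicate $U$ requires that the infinite strong subtree produced by Milliken's theorem can be taken to have its level set disjoint from $\{0,\dots,m-1\}$ except possibly using the original bottom levels; the standard way around this is to first pass to the strong subtree on levels $\{0,1,\dots,m-1\}\cup L$ for a suitable infinite $L\subseteq\omega$, which Milliken's theorem permits since one may prescribe that an initial segment of the strong subtree be fixed.

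I expect the main obstacle to be purely expository rather than mathematical: precisely matching the conventions of Definition~\ref{Def:tree} (level trees, ML-closure, the specific relations packed into $\bT_{k,m}$) to whichever formulation of Milliken's tree theorem one cites, and in particular verifying that the notion of embedding $\emb(\bT_{k,m}({<}n),\bT_{k,m})$ coincides on the nose with the combinatorial notion of strong subtree of height $n$. A clean way to organize this is to state as a preliminary lemma that a subset $S\subseteq k^{<\omega}$ is the image of an embedding $\bT_{k,m}({<}n)\to \bT_{k,m}$ if and only if $\ol{S}^{\bT_k}=S$, $|S| $ corresponds to a strong subtree shape, $S$ has exactly $n$ levels, and the $U$-marked part of $S$ is exactly $S\cap k^{<m}$ — after which the theorem drops out of Milliken's theorem applied to $k^{<\omega}$ with the marked initial segment held fixed.
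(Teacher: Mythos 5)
The paper offers no proof of this statement---it is quoted directly from \cite{Milliken}---and your sketch is exactly the intended derivation: identify copies of $\bT_{k,m}({<}n)$ with $n$-level strong subtrees whose first $m$ levels are all of $k^{<m}$, check that the passing-number relations and $\lexeq$ are computed correctly inside any strong subtree (they are, since strong subtrees are meet-closed and meets occur on levels of the subtree), and invoke the product form of Milliken's tree theorem with the initial segment $k^{<m}$ held fixed. The only slip is in your closing lemma, where the condition ``the $U$-marked part of $S$ is exactly $S\cap k^{<m}$'' is vacuous as stated; what is actually needed (and what you use correctly elsewhere in the sketch) is $S\cap k^{<m}=k^{<m}$, which forces every embedding of $\bT_{k,m}({<}n)$ or of $\bT_{k,m}$ to restrict to the identity on $k^{<m}$.
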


For most applications, one takes $m = 0$ (i.e.\ just $\bT_k$), but we will use the more general theorem in an application of Theorem~\ref{Thm:Implies_Finite_Recurrent_BRS}.

Theorem~\ref{Thm:Milliken}, which shows that $\rm{BRO}(\bT_{k, m})\subseteq \age(\bT_{k, m})$ is upwards cofinal, is insufficient to conclude that $\bT_{k, m}$ satisfies IRT. Given some $\bA\in \age(\bT_{k, m})$, there might not be $n< \omega$ and $f\in \emb(\bA, \bT_{k, m}({<}n))$ for which Proposition~\ref{Prop:Large_image_monotone_BRD} applies. To get around this, we expand the structure using Definition~\ref{Def:Envelope_Expansion}. We then have the following (see \cite{Stevo_book}).
\begin{theorem}
    \label{Thm:Milliken_Envelopes}
    For any $k, m< \omega$ and writing $\cC = \{\bT_{k, m}({<}n): m\leq n< \omega\}$, $\bT_{k, m}$ admits $\cC$-envelopes. Thus $\bT^{*\cC}_{k, m}$ satisfies IRT. Furthermore, $\bT^{*\cC}_{k, m}/\bT_{k, m}$ is $4$-finitary.
\end{theorem}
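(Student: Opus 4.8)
The plan is to establish the three claims of Theorem~\ref{Thm:Milliken_Envelopes} in order: first that $\bT_{k,m}$ admits $\cC$-envelopes, then deduce that $\bT^{*\cC}_{k,m}$ satisfies IRT from Proposition~\ref{Prop:Envelopes}, and finally check the $4$-finitary bound by hand. The heart of the matter is the first claim, so I would concentrate on it. Fix $\bA\in \age(\bT_{k,m})$ and an embedding $g\in \emb(\bA,\bT_{k,m})$; write $S = g[A]\subseteq k^{<\omega}$. I would argue that the unique $\cC$-envelope realized by $g$ is the ML-closure $\ol{S}^{\bT_{k,m}}$, truncated just above the top level of $S$. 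Concretely, if $N$ is one more than the maximal height of a node of $S$, then $\ol{S}^{\bT_{k,m}}$ sits inside $\bT_{k,m}(\mathord{<}N)$, and this inclusion witnesses a $\cC$-extension of $\bA$. The key structural facts to verify are: (i) the ML-closure of a finite set is finite (immediate from closure under meets and levels, since only finitely many meets and finitely many level-projections arise); (ii) $\ol{S}^{\bT_{k,m}}$, with the level relation reindexed to a contiguous block of levels, is isomorphic to some $\bT_{k,m}(\mathord{<}N')$ — wait, this is actually false in general, since an ML-closed finite subtree need not be a full finite binary-type tree. So the correct statement is subtler: a $\cC$-envelope is a member of $\cC$, i.e.\ literally some $\bT_{k,m}(\mathord{<}n)$, and the claim is that $g$ extends to an embedding of $\bA$ into $\bT_{k,m}(\mathord{<}n)$ for $n = N$, and that this is $\leq$-minimal and unique up to isomorphism.

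For minimality and uniqueness I would proceed as follows. Any $\cC$-extension $(f,\bT_{k,m}(\mathord{<}n))$ of $\bA$ that is realized by $g$ comes with $h\in \emb(\bT_{k,m}(\mathord{<}n),\bT_{k,m})$ and $g = h\circ f$; so $h[\,\bT_{k,m}(\mathord{<}n)\,]$ is a copy of $\bT_{k,m}(\mathord{<}n)$ in $\bT_{k,m}$ containing $S$. The point is that such a copy must contain $\ol{S}^{\bT_{k,m}}$ (as copies of $\bT_{k,m}(\mathord{<}n)$ are themselves subtrees, hence ML-closed), and must reach up to level-index at least that of the top of $S$; so $n\geq N$. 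Conversely I must produce \emph{one} copy of $\bT_{k,m}(\mathord{<}N)$ in $\bT_{k,m}$ containing $S$ — here is where one genuinely uses the structure of $k^{<\omega}$: any finite ML-closed subtree of $k^{<\omega}$ living on levels $<N$ embeds, via a strong (level- and passing-number-preserving) embedding, into a copy of $\bT_{k,m}(\mathord{<}N)$ inside $\bT_{k,m}$; one builds this copy by, at each node of the envelope, choosing an appropriate set of $k$ immediate successors with the right passing numbers and interpolating levels. Then uniqueness up to isomorphism follows because any two $\leq$-minimal extensions both have underlying structure forced to be $\bT_{k,m}(\mathord{<}N)$ and both induce the same $\ol{S}^{\bT_{k,m}}$ with the same level-indexing, so the evident map between them is an isomorphism commuting with the embeddings of $\bA$. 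I expect \textbf{this realization/interpolation step to be the main obstacle}: one must carefully check that passing numbers, the lexicographic order, and the unary predicates $U$ (on levels $<m$) can all be simultaneously matched, which forces the truncation level $N$ to be at least $m$ and forces a little care about which levels of the envelope are ``$U$-levels.''

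Granting the first claim, the second is immediate: $\cC\subseteq \rm{BRO}(\bT_{k,m})$ by Theorem~\ref{Thm:Milliken}, and $\bT_{k,m}$ admits $\cC$-envelopes, so Proposition~\ref{Prop:Envelopes} gives that $\bT^{*\cC}_{k,m}$ satisfies IRT. For the third claim, that $\bT^{*\cC}_{k,m}/\bT_{k,m}$ is $4$-finitary, I would take $I$ to be the set of structures in $\age(\bT_{k,m})$ on underlying set $\leq 4$ and check the two requirements of Definition~\ref{Def:Props_of_expansions}(2). Finiteness of $\bT^{*\cC}_{k,m}(I,\bT_{k,m})$ is clear since there are finitely many isomorphism types of substructures on $\leq 4$ points and each has finitely many $\cC$-shapes (each shape being a small ML-closed tree). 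For the separating property, I would show that the $\cC$-shape of a tuple $g\in\emb(\bA,\bT_{k,m})$ is determined by the $\cC$-shapes of its sub-tuples of size $\leq 4$: the ML-closure $\ol{g[A]}$ is reconstructed from the pairwise meets (size-$\leq 2$ data, but to know \emph{where} a meet sits relative to the other points and to know passing numbers/$U$-values one needs up to $4$ points — two points to form a meet, plus up to two more to locate that meet's level and passing data relative to the configuration). Thus if two expansions $\bB',\bB^*\in\bT^{*\cC}_{k,m}(\bB,\bT_{k,m})$ agree on every $\leq 4$-element sub-configuration, their $\cC$-shapes agree on all of $\bB$, so $\bB'=\bB^*$; contrapositively, disagreement is witnessed on some $\bA\in I$. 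Pinning down that $4$ (rather than $3$ or $5$) is the exact constant is the one remaining bookkeeping point, and I would justify it by exhibiting the reconstruction recipe explicitly: meet of two points (2), the level at which a third point branches off the $s\wedge t$ path (3), and the passing number / $U$-status comparison requiring a fourth reference point in the worst case (4).
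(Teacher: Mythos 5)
Your overall architecture (identify the envelope via the ML-closure, deduce IRT from Proposition~\ref{Prop:Envelopes}, and count $4$ points for finitariness) matches the intended argument --- the paper itself defers the envelope construction to Todorcevic's book and only sketches the $4$-finitary count, which your last paragraph reproduces in essentially the right form. But your identification of the envelope, and hence your minimality argument, is wrong. You claim the envelope realized by $g$ is $(g,\bT_{k,m}({<}N))$ with $N$ one more than the \emph{actual} maximal height of a node of $S=\im(g)$, and you argue $n\geq N$ for any realized extension because the copy $h[\bT_{k,m}({<}n)]$ ``must reach up to the level-index of the top of $S$.'' This is a non sequitur: an embedding of $\bT_{k,m}({<}n)$ into $\bT_{k,m}$ need not preserve actual heights (it only preserves the relation $\leq_{ht}$), so its image is a strong subtree whose $n$ levels may be \emph{any} $n$ levels of $\bT_{k,m}$. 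For a single node $t$ at height $100$ in $\bT_2$, the one-point structure is realized by the extension $(f,\bT_2({<}1))$ via $h(\emptyset)=t$, so its envelope has $n=1$, not $n=101$. The correct minimal $n$ is the number of \emph{distinct heights} occupied by $\ol{S}^{\bT_{k,m}}$ (equivalently, by the $\wedge$-closure of $S$): any realizing copy contains $\ol{S}^{\bT_{k,m}}$ and separates its levels, giving the lower bound, and an interpolation argument produces a strong copy of $\bT_{k,m}({<}n)$ on exactly those levels and containing $\ol{S}^{\bT_{k,m}}$, giving realization.

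This is not a cosmetic slip. With your envelopes, the expansion $\bT^{*\cC}_{k,m}$ would record absolute heights of configurations, which are not invariant under $\emb(\bT_{k,m})$; the expansion would then fail to be precompact (a two-point configuration would have infinitely many ``shapes''), so in particular not $4$-finitary, and the hypothesis behind Proposition~\ref{Prop:Envelopes} --- that $\emb(\bT_{k,m}({<}n),\bT_{k,m})\circ e$ is large in $\emb(\bA,\bT_{k,m})$ for the envelope embedding $e$ --- would fail, so IRT would not follow. Once the envelope is correctly taken to be the abstract similarity type of $\ol{\im(g)}^{\bT_{k,m}}$ with its levels reindexed and padded out to a full $n$-level tree, your uniqueness and interpolation sketch does go through, and your $4$-finitary count becomes the paper's: each node of the $\wedge$-closure is named by at most two points of $\im(g)$, and the shape is determined by the relative heights of pairs of such nodes, hence by sub-configurations of size at most $4$.
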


\begin{proof}
    We refer to \cite{Stevo_book} for most of the argument, but briefly discuss why the corresponding expansion is $4$-finitary. This follows from the observation that to determine the $\cC$-shape of some $g\in \emb(\bA, \bT_{k, m})$, it is enough to be able to describe the relative heights of nodes in the $\wedge$-closure of $\im(g)$, which we denote by $X$. As any node in $X$ can be described using at most two nodes in $\im(g)$, we can compare the relative heights of two nodes in $X$ by looking at a subset of $\im(g)$ of size at most $4$.
\end{proof}

\begin{exa}
    \label{Exa:Rationals}
    Consider the rational linear order $\la \bbQ, \leq\ra$. Sierpi\'nski \cite{Sierpinski} was the first to consider the Ramsey theory of this structure and constructed a coloring showing that $\rm{BRD}(2, \bbQ)\geq 2$ (Sierpi\'nski's coloring takes advantage of the fact that an expansion of $\bbQ$ adding a new linear order in order type $\omega$ is recurrent). Several decades later, Galvin \cite{Galvin} proved that in fact $\rm{BRD}(2, \bbQ) = 2$. Laver in unpublished work proved that $\bbQ$ has finite BRDs, more-or-less by proving parts of what would become Milliken's theorem. Devlin \cite{Devlin} then managed to characterise the exact BRDs for $\bbQ$, and his characterisation shows that $\bbQ$ admits a BRS. 

    Let us write $\bK$ for the rational linear order on underlying set $2^{<\omega}$ and with order given by $\prec_{lex}$. Then $\bT_2^*$ is an expansion of $\bK$. To show that $\bK$ admits a finitary, recurrent BRS using Theorem~\ref{Thm:Implies_Finite_Recurrent_BRS}, we find $\phi\in \emb(\bK, \bK)$ such that $(\bT_2^*{\cdot}\phi)/\bK$ is $4$-finitary and $4$-unavoidable. Such a $\phi$ can be described via objects which in this survey we call \emph{Devlin trees}, which are closely related to \emph{Joyce trees} (see \cite{ACDMP}). A \emph{Devlin tree} is a subtree $\bS\subseteq \bT_2^*$ such that for each $m< \omega$ with $\bS(m)\neq \emptyset$, exactly one of the following happens.
    \begin{enumerate}
        \item 
        There is exactly one $t\in \bS(m)$ with $|\is_\bS(t)| = 2$. We call $t$ a \emph{splitting node} of $\bS$. For each $s\in \bS(m)\setminus \{t\}$, we have $\is_\bS(s) = \{s'\}$ for some $s'$ with $\neg R_1(s, s')$ (i.e.\ passing number $0$). 
        \item 
        There is exactly one $t\in \bS(m)$ with neither $t^\frown 0$ nor $t^\frown 1\in \bS(m+1)$, i.e.\ $t$ is a terminal node in $\bS$. We call $t$ a \emph{coding node} of $\bS$. For each $s\in \bS(m)\setminus \{t\}$, we have $\is_\bS(s) = \{s'\}$ for some $s'$ with $\neg R_1(s, s')$ (i.e.\ passing number $0$).
    \end{enumerate}
    When $\bS$ is infinite, we additionally demand that any $s\in S$ can be $\sqsubseteq$-extended to a coding node of $\bS$.
    
    We sometimes expand $\bS$ by adding a unary predicate $\cdnd$ to label the coding nodes, obtaining a \emph{Devlin coding tree}. These are examples of the \emph{diagonal coding trees} defined in \cite{CDP_SDAP1} (see Example~\ref{Exa:CDP}). The \emph{structure coded by} $\bS$, denoted $\str(\bS)$ is the linear order  $\bK|_{\cdnd(\bS)}$. One can show (see \cite{Stevo_book}) that there is $\phi\in \emb(\bK)$ such that $\ol{\im(\phi)}^{\bT_2^*}$ is a Devlin tree whose coding nodes are exactly $\im(\phi)$. To apply Theorem~\ref{Thm:Implies_Finite_Recurrent_BRS}, one only needs to show that $(\bT_2^*{\cdot}\phi)/\bK$ is $4$-unavoidable; however, it is not too hard to show directly that any two Devlin coding trees which code a rational order are bi-embeddable \cite{Stevo_book}. 
\end{exa}

\begin{exa}
	\label{exa:2orders}
	This example was obtained jointly with Rivers Chen. Consider the class $\cK$ of finite structures equipped with two independent linear orders $\preceq_0$ and $\preceq_1$. One can think of the \fr limit $\bK$ as the ``rational plane.'' In particular, if we view $K\subseteq \bbR^2$ and the two orders coming from the usual ordering on the $x$ and $y$ coordinates, respectively, then $K$ must have the property that any two distinct points have distinct $x$-coordinates and distinct $y$-coordinates. Homogeneous structures with 2 linear orders can be seen as permutations and have been classified by Cameron~\cite{cameron2002homogeneous}. Homogeneous structures with $n$ linear orders, $n\geq 2$ are sometimes called \emph{$n$-dimensional permutations} and are classified by Braunfeld~\cite{braunfeld2020classification}.

    We will use an instance of the product form of Milliken's theorem. Let $$\bT_2\otimes\bT_2 = \la (2\times 2)^{{<}\omega}, (\sqsubseteq_i)_{i< 2}, (\wedge_i)_{i< 2}, \leq_{ht}, (\preceq_i)_{i< 2}, (R_{i, j})_{0< i, j< 2}\ra$$ 
    where the relations $\sqsubseteq_i$, $\wedge_i$, and $\preceq_i$ correspond to the structure $\bT_2$ in each of the two coordinate projections, and $R_{i, j}$ now encodes a ``2-dimensional'' passing number. In particular, the orders $\preceq_0$ and $\preceq_1$ are interpreted as the lexicographic orders in each coordinate. Much as in Theorem~\ref{Thm:Milliken}, we have that each finite substructure of the form $(\bT_2\otimes \bT_2)({<}n)\cong \bT_2({<}n)\otimes \bT_2({<}n)$ is in $\rm{BRO}(\bT_2\otimes \bT_2)$; indeed, one can see this as an instance of Theorem~\ref{Thm:Milliken} for the tree $\bT_{2, 1}$. As in Theorem~\ref{Thm:Milliken_Envelopes}, upon setting $\cC = \{(\bT_2\otimes \bT_2)({<}n): n< \omega\}$, then $\bT_2\otimes \bT_2$ admits $\cC$-envelopes, and the expansion $(\bT_2\otimes \bT_2)^{*\cC}/(\bT_2\otimes \bT_2)$ is $4$-finitary for almost the exact same reason.

    Let us write $\bM$ for $(\bT_2\otimes \bT_2)|_{\{\preceq_0, \preceq_1\}}$, also setting $M = (2\times 2)^{{<}\omega}$. So $\bM$ is isomorphic to $\bbQ\times \bbQ$ equipped with the usual linear order in each coordinate, which we refer to as $x$ and $y$ coordinates. Note that $\age(\bK)\subsetneq \age(\bM)$, since it is possible for distinct points in $M$ to have the same $x$-coordinate or the same $y$-coordinate. However, $\emb(\bK, \bM)\neq \emptyset$; indeed, given any finite $\bA\subseteq \bK$ and $f\in \emb(\bA, \bM)$, one can extend $f$ to a member of $\emb(\bK, \bM)$ by adding points to $\bA$ one at a time. One can also show that $\emb_\bK(\bM, \bK)\neq \emptyset$; in words, there is a map $\psi\colon M\to K$ which is ``correct'' on subsets of $M$ where distinct points have different $x$ and $y$ coordinates, but which ``breaks ties'' given any pair of points from $M$ with a common $x$ or $y$ coordinate. To build such a $\psi$, one can show that given a finite $\bB\subseteq \bM$ and $f\in \emb_\bK(\bB, \bK)$, then $f$ extends to a member of $\emb_\bK(\bM, \bK)$ by adding points to $\bB$ one at a time.

    Set $\bM^* = (\bT_2\otimes\bT_2)^{*\cC}$. To apply Theorem~\ref{Thm:Implies_Finite_Recurrent_BRS}, one needs to find $\phi\in \emb(\bK, \bM)$ so that $(\bM^*{\cdot}\phi)/\bK$ is $4$-finitary and $4$-unavoidable. We can describe such a $\phi$ using objects which are very similar to Devlin trees; in what follows, we drop some of the structural formalism to ease notation. We view $2\times 2^{{<}\omega}$ as two copies of $2^{{<}\omega}$ side by side. Given $S\subseteq 2\times 2^{{<}\omega}$, we write $S_i = S\cap (\{i\}\times 2^{{<}\omega})$, $S(n) = S\cap (2\times 2^n)$, and $S_i(n) = S_i\cap S(n)$. Given $t = (i, s)\in 2\times 2^{n}$ and $j< 2$, we write $t^\frown j = (i, s^\frown j)\in 2\times 2^{n+1}$. A \emph{product Devlin tree} is a subset $S\subseteq 2\times 2^{{<}\omega}$ closed under initial segments, with both $S_0$ and $S_1$ non-empty, with $S_0(n) = \emptyset$ iff $S_1(n) = \emptyset$, and such that for each $n$ with $S(n)\neq \emptyset$, exactly one of the following.
    \begin{enumerate}
        \item 
        There is exactly one $t\in S(n)$ which \emph{splits}, i.e.\ with both $t^\frown 0$ and $t^\frown 1\in S$. For $s\in S(n)\setminus\{t\}$, we have $s^\frown 0\in S(n+1)$.
        \item 
        There are exactly one $t_0\in S_0(n)$ and exactly one $t_1\in S_1(n)$ which are terminal. We call $(t_0, t_1)$ a \emph{product coding node} of $S$. For $s\in S(n)\setminus \{t_0, t_1\}$, we have $s^\frown 0\in S(n+1)$. 
    \end{enumerate}
    If $S$ is infinite, we additionally demand that for any $m< \omega$ and any $(s_0, s_1)\in S_0(m)\times S_1(m)$, there are $n> m$ and a product coding node $(t_0, t_1)\in S_0(n)\times S_1(n)$ with $s_i\sqsubseteq t_i$ for each $i< 2$.

    Given a product Devlin tree $S$, let $\cdnd(S)\subseteq M$ denote the set of product coding nodes. Write $\str^*(S)\subseteq \bM^*$ for the substructure induced on $\cdnd(S)$, with reduct $\str(S)\subseteq \bM$; if $\str(S)\cong \bK$ and $\phi\in \emb(\bK, \bM)$ is such that $\im(\phi) = \cdnd(S)$, then $(\bM^*{\cdot}\phi)/\bK$ will be a finitary, recurrent BRS. 
\end{exa}

\begin{exa}
    \label{Exa:Rado}
	The \emph{Rado graph} $\bK$ is the \fr limit of the class $\cK$ of finite graphs. Erd\H{o}s, Hajnal, and P\'osa \cite{EHP_1975} showed that $\rm{BRD}(e, \bK)\geq 4$, where $e$ denotes the edge. Pouzet and Sauer \cite{PS_1996} proved that $\rm{BRD}(e, \bK) = 4$. Using Milliken's theorem, Sauer \cite{Sauer_2006} proved that $\bK$ has finite BRDs, and shortly thereafter, Laflamme, Sauer, and Vuksanovic \cite{Laflamme_S_V} characterise the exact BRDs, in the process showing that $\bK$ admits a finitary, recurrent BRS.

    We view graphs as symmetric, irreflexive binary structures using only the binary relation $R_1$. Write $\bM = \bT_2^*|_{\{R_1\}}$; then $\bM$ is a graph, and it is straightforward to show that $\bM$ and $\bK$ are bi-embeddable. Thus upon building $\phi\in \emb(\bK, \bM)$ such that $(\bT_2^*{\cdot}\phi)/\bK$ is $4$-unavoidable, Theorem~\ref{Thm:Implies_Finite_Recurrent_BRS} tells us that this is a finitary, recurrent BRS. Similar to Example~\ref{Exa:Rationals}, such $\phi$ can be described by objects, again closely related to Joyce trees, which in this survey we call \emph{LSV trees} after the authors of \cite{Laflamme_S_V}. The definition is the exact same as for Devlin trees except that on coding levels, if $t\in \bS(m)$ is the coding node, $s\in \bS(m)\setminus \{t\}$, and $\is_\bS(s) = \{s'\}$, we allow the possibility that $R_1(s, s')$ holds. We also define \emph{LSV coding trees} similarly to the last example, and the \emph{structure coded by} $\bS$, denoted $\str(\bS)$, is the graph $\bM|_{\cdnd(\bS)}$. It is routine to construct LSV trees which code any countable graph. One of the main results of \cite{Laflamme_S_V} is that any two LSV-trees coding the Rado graph are bi-embeddable; while similar in spirit to the corresponding result for Devlin trees coding the rational order, the proof for LSV trees is harder (in fact, provably so \cite{ACDMP}). Hence if $\phi\in \emb(\bK, \bM)$ has the property that $\ol{\im(\phi)}^{\bT_2^*}$ is an LSV tree whose coding nodes are exactly $\im(\phi)$, then $(\bT_2^*{\cdot}\phi)/\bK$ is recurrent, and thus a big Ramsey structure. Note that formally, the result that all LSV coding trees which code the Rado graph are bi-embeddable is stronger than the assertion that $(\bT_2^*{\cdot}\phi)/\bK$ is recurrent; not only are $\ol{\im(\phi\circ \eta)}^{\bT_2^*}$ and $\ol{\im(\phi)}^{\bT_2^*}$ bi-embeddable whenver $\eta\in \emb(\bK)$, but \emph{any} LSV tree coding Rado has the form $\ol{\im(\phi\circ \eta)}^{\bT_2^*}$ for some $\eta\in \emb(\bK)$. Here we give a new proof of the bi-embeddability result using Theorem~\ref{Thm:Milliken_Envelopes}.

    Fix two LSV coding trees $\bS$ and $\bT$ such that $\str(\bT)$ is a Rado graph. We show that $\emb(\bS, \bT)\neq \emptyset$; note that this happens iff $\emb(\bS|_{\cdnd(\bS)}, \bT)\neq \emptyset$. We can suppose for every $m< \rm{ht}(\bS)$ that $\bS(m)\subseteq 2^m$. Let $\theta\in \emb(\bM, \str(\bT))$. Find $\eta\in \emb(\bT_2^*)$ such that for each $\bA^*\in \age(\bT_2^*)$ with $|A|\leq 4$, the map on $\emb(\bA^*, \bT_2^*)$ given by $f\to \bT{\cdot}(\theta\circ \eta\circ f)$ is monochromatic. To ease notation, we simply replace $\theta$ with $\theta\circ \eta$. So to emphasize, we have found $\theta\in \emb(\bM, \str(\bT))$ satisfying
    \begin{center}
        $(**)$ - For each $\bA^*\in \age(\bT_2^*)$ with $|A|\leq 4$, the map on $\emb(\bA^*, \bT_2^*)$ given by $f\to \bT{\cdot}(\theta\circ f)$ is monochromatic.
    \end{center}
    We will show that for any $\theta$ satisfying $(**)$, either $\theta|_{\cdnd(\bS)}\in \emb(\bS|_{\cdnd(\bS)}, \bT)$ or $\theta|_{\cdnd(\bS)}$ is almost an embedding, but flips $\lex$.
    In the items below, say that a finite subset $F\subseteq 2^{<\omega}$ is \emph{in LSV position} if $\ol{F}^{\bT_2^*}$ is an LSV-tree whose coding nodes are exactly $F$. 

    \begin{itemize}
        \item 
        For $x, y\in 2^{<\omega}$ with $x<_{ht} y$, we have $\theta(x)<_{ht}^{\bT} \theta(y)$. Write $\bA^*\subseteq \bT_2^*$ for the substructure induced on $\{x, y\}$. The set $\{f\in \emb(\bA^*, \bT_2^*): f(x) = x)\}$ is infinite. Thus the set $\{\theta(f(y)): f\in \emb(\bA^*, \bT_2^*)\}$ is infinite. As some of these must satisfy $\theta(x)<_{ht}^\bT \theta(f(y))$, assumption $(**)$ tells us that they all do.
        \item 
        For any $x, y\in 2^{<\omega}$, we have $(\theta(x)\wedge^\bT \theta(y)) \leq_{ht}^\bT \theta(x\wedge y)$. If $x\wedge y\in \{x, y\}$, this is clear. If $x\wedge y\not\in \{x, y\}$, then in the graph $\bM$, exactly one of $x$ or $y$ is adjacent to $x\wedge y$. Thus in $\str(\bT)$, exactly one of $\theta(x)$ or $\theta(y)$ is adjacent to $\theta(x\wedge y)$. As $\theta(x\wedge y) <_{ht}^\bT \{\theta(x), \theta(y)\}$ by the first item, we must have $\theta(x)|^\bT_{\rm{ht}_\bT(\theta(x\wedge y))} \neq \theta(y)|^\bT_{\rm{ht}_\bT(\theta(x\wedge y))}$,  implying that $(\theta(x)\wedge^\bT \theta(y)) <_{ht}^\bT \theta(x\wedge y)$.

        \item 
        For $w, x, y, z\in 2^{<\omega}$ with $(w\wedge x) <_{ht} (y\wedge z)$, then also $(\theta(w) \wedge^\bT\theta(x)) <_{ht}^\bT (\theta(y)\wedge^\bT \theta(z))$. We first consider the case $w = x$. Suppose $\rm{ht}(x) = m-1$. Find $\zeta\in \emb(\bT_{2, m}^*, \bT_{2, m}^*)$ so that for each $v\in 2^{<\omega}$ with $x<_{ht} v$, we have that $\theta\circ \zeta(v)|_{\rm{ht}_\bT(\theta(x))}$ depends only on $v|_{m-1}$. Hence what we want is true if $y, z\in \im(\zeta)$, but by assumption $(**)$, it must be true generally. When $w\neq x$, we have $(\theta(w)\wedge^\bT \theta(x)) \leq_{ht}^\bT \theta(w\wedge x) <_{ht}^\bT (\theta(y)\wedge^\bT \theta(z))$, the first inequality by the second item and the second using $w\wedge x$ in place of $x$ in the prior reasoning. 
        \item 
        From the first and third items, it follows that for any set $Q\subseteq 2^{<\omega}$ in LSV position, $\theta|_{Q}\in \emb(\bS^-|_Q, \bT^-)$, where $\bS^-$, $\bT^-$ are the $\lex$-forgetting reducts of $\bS$ and $\bT$.
        \item 
        On any set $Q\subseteq 2^{<\omega}$ in LSV position, $\theta$ either preserves $\lexeq$ or flips it. Suppose $\bA^*\in \age(\bT_2^*)$ has size $2$ and describes a pair of vertices in LSV position. By $(**)$ and the fourth bullet it is true that either $\bT{\cdot}(\theta\circ f) = \bA^*$ for every $f\in \emb(\bA^*, \bT_2^*)$ or $\bT{\cdot}(\theta\circ f)$ is the $\lexeq$-flip of $\bA^*$ for every such $f$. Supposing the former, we will show that $\theta$ preserves $\lexeq$ on $Q$; the proof that $\theta$ flips $\lexeq$ in the other case is similar. Suppose $\bB^*\in \age(\bT_2^*)$ describes another pair in LSV position. We can find $w\lex x\lex y \lex z\in 2^{<\omega}$ in LSV position such that $\bT_2^*|_{\{w, y\}}\cong \bA^*$, $\bT_2^*|_{\{x, z\}}\cong \bB^*$, and $w\wedge y = x\wedge z <_{ht}^\bT \{w\wedge x, y\wedge z\}$. By the fourth bullet, we have $\theta(w\wedge y) = \theta(x\wedge z) <_{ht}^\bT \{\theta(w)\wedge^\bT \theta(x), \theta(y)\wedge^\bT \theta(z)\}$, and our assumption on $\bA^*$ gives $\theta(w)\lex^\bT \theta(y)$. It follows that we must have $\theta(x)\lex^\bT \theta(z)$; appealing to $(**)$, this happens for every pair inducing a copy of $\bB^*$ in $2^{<\omega}$, and $\bB^*$ was arbitrary.
    \end{itemize}
    The above bullets combine to imply that $\theta|_{\cdnd(\bS)}\in \emb(\bS|_{\cdnd(\bS)}, \bT)$ in the case that $\theta$ was $\lexeq$-preserving. In the case that it was $\lexeq$-reversing, we can obtain the desired result by explicitly building an LSV-tree $\bU$ that embeds every LSV-tree (including $\bS$ and its $\lexeq$-flip), then running the above argument.  \qed
\end{exa}

\begin{exa}
    \label{Exa:Uncountable}
    In this example, we modify Definition~\ref{Def:tree} by weakening item (1) to allow $\rm{Pred}_\bT(t)$ to be well-ordered and possibly infinite. Shelah \cite{Shelah_288} proves a version of Milliken's theorem for an expansion of the uncountable tree $\bT_{\kappa} := \la 2^{<\kappa}, \preceq, \wedge, \leq_{ht}, \lex, R_1\ra$, where $\kappa$ is a cardinal which is measurable in the forcing extension obtained by adding $\beth_{\kappa+\omega}$-many Cohen reals. The needed expansion of $\bT_\kappa$, denoted $\bT_\kappa^\prec$, is given by adding a binary relation $\prec$ which well-orders every level; call structures of this form \emph{level-ordered trees}. However, the version of Milliken's theorem proven for $\bT_\kappa^{\prec}$ is weaker than showing that finite level-ordered trees are big Ramsey objects. Instead, given a finite level-ordered tree $\bA^{\!\prec}$ and a coloring $\gamma\colon \emb(\bA^{\!\prec}, \bT_\kappa^\prec)\to \sigma$ (where in fact any cardinal $\sigma < \kappa$ will work), one shows that there is $\eta\in \emb(\bT_\kappa, \bT_\kappa)$ (\emph{without} $\prec$) so that $\gamma$ is constant on $\eta[\emb(\bA^{\!\prec}, \bT_\kappa^\prec{\cdot}\eta)]$ (note the second appearance of $\eta$). Using a strengthening of this, D\v{z}amonja, Larson, and Mitchell \cite{DLM_Rational, DLM_Rado} construct finitary big Ramsey structures for the $\kappa$-saturated analogs of the rational order and the Rado graph. These simply look like uncountable Devlin trees and LSV trees, respectively, equipped with well-orders on every level satisfying certain properties. Whether or not any of these big Ramsey structures is recurrent is open.
\end{exa}

\begin{rem}
    A general limitation of the big Ramsey degree proofs which use Milliken's theorem as given in Theorem~\ref{Thm:Milliken} is
	the fact that a regularly branching tree can only represent structures in finite languages containing binary relations.  Two kinds of structures can be
	represented in the tree:
	\begin{enumerate}
		\item Linear orders (represented the lexicogrpahic order of the tree) as in Example~\ref{Exa:Rationals}.
		\item Unrestricted structures in binary language (represnted using the passing numbers) as in Example~\ref{Exa:Rado}. 
	\end{enumerate}
    Using the product form of Milliken's theorem, as in Example~\ref{exa:2orders} these constructions can be combined giving upper bounds on big Ramsey degrees for structures with multiple linear orders
    and multiple types of binary edges in free superposition. This can be used, for example, to  represent directed graphs (where the orientation
    of the edge is encoded by a pair of symmetric binary relations and an edge).  These structures are sometimes referred to
	as an ``unrestricted'' structures (or ``simple'' in~\cite{dobrinen2016rainbow}).

    To obtain upper bounds for BRDs of structures with relations of higher arity, it is possible to use a product form
    of Milliken's theorem on rapidly branching trees, which is beyond the scope of this survey. For details see~\cite{Stevo_book} or the paper~\cite{BCHKV_3Unif}, which gives upper
    bound on the BRDs in the class of 3-uniform hypergraphs and~\cite{BCdRHKK} generalizing this construction to unrestricted $\omega$-categorical
    structures in relational languages which are not necessarily finite, but are required to have finitely many relations of
    every arity.

    It is possible to construct a persistent coloring showing that the Rado graph with countably many types of edges
	does not have finite BRDs and thus, in a certain sense, these constructions are tight.
\end{rem}

\section{Coding tree techniques}
\label{Sec:Coding_Trees}

In a major technical leap forward, Dobrinen in \cite{Dobrinen_3Free, Dobrinen_Henson} proved that for every $k\geq 3$, the \fr class of finite $k$-clique-free graphs has finite big Ramsey degrees. For these classes, the corresponding \fr limits are much more difficult to code as trees in a straight-forward way. To get around this, Dobrinen introduces the concept of trees enriched with a designated set of coding nodes, which here we view as the unary predicate $\cdnd$.

Throughout this section, $\bK$ denotes an infinite structure with the property that $\cdnd\in \cL_\bK$ and $\cdnd(\bK) = K$. Abstractly, a ``proof of finite BRDs and/or existence of BRS using coding trees'' can be defined as an application of one of Theorems~\ref{Thm:Weak_Biembeddability} or \ref{Thm:Implies_Finite_Recurrent_BRS} where $\psi|_{\cdnd(\bM)}\colon \bM|_{\cdnd(\bM)}\to \bK$ is an isomorphism. We have already seen examples of weak bi-embeddings of this form; for instance, if $\bK$ is the rational order or the Rado graph, then we can let $\bM^*$ be a Devlin or LSV coding tree, respectively, and $\bM$ can be the reduct of $\bM^*$ eliminating all of the tree structure. So $\bM$ is just a copy of $\bK$ along with several extra isolated vertices that will become the other nodes of the tree, and $\psi$ can be defined arbitrarily on these extra nodes.

A common extra feature of coding tree arguments in the literature is that one typically attempts to prove Ramsey theorems about $\bM^*$ directly, coding nodes and all. This typically requires the use of techniques from set-theoretic forcing, an approach pioneered by Dobrinen \cite{Dobrinen_3Free} and inspired by a forcing proof of the Halpern-L\"auchli theorem\footnote{The Halpern-L\"auchli theorem is the inductive step in the proof of Milliken's theorem.} \cite{Halpern_Lauchli} given by Harrington which first appears in print in \cite{Todorcevic_Farah}, see also \cite{Dobrinen_forcing}. As for which $\bM^*$ to work with, the approaches vary. The two extreme situations are on the one hand, that $\bM^*$ is built in some canonical fashion, but finding $\phi$ as in Theorem~\ref{Thm:Implies_Finite_Recurrent_BRS} takes work, and on the other, that $\bM^*$ is rather difficult to build, but we can apply Theorem~\ref{Thm:Implies_Finite_Recurrent_BRS} taking $\phi$ to be onto $\bM|_{\cdnd(\bM)}$. Of course, there are examples where the construction of $\bM^*$ lies somewhere in between, for instance in \cite{Dobrinen_3Free, Dobrinen_Henson}. 

In what remains of this section, we describe these two extreme approaches and indicate several recent examples of them in the literature. First, we describe the most common ``canonical'' choice of $\bM^*$, the \emph{coding tree of $1$-types} with respect to a given \fr class.

\begin{defin}
    \label{Def:Tree_of_1types}
    Fix a \fr class $\cK$ with limit $\bK$ and with $\cL_\bK$ finite. We can ensure that $\cL_\bK$ doesn't contain any level tree relation symbols (Definition~\ref{Def:tree}). Let $\bA\leq \bK$ be enumerated. The \emph{$\cK$-coding tree} of $\bA$ \cite{BCDHKVZ}, or equivalently the \emph{tree of $\cK$-$1$-types} \cite{CDP_SDAP1, CDP_SDAP2}, has been denoted by either $\ct^\bA$ or $\bbS(\bA)$, respectively. Here, we adopt the notation $\bbT^\bA_\cK$. The underlying set of this tree is $\rm{tp}^\bA_\cK)$, the \emph{set of $1$-types over initial segments of $\bA$}.  Fix some symbol $\sft\not\in \omega$, the \emph{type vertex}. Members of $\rm{tp}^\bA_\cK$ are structures $\bB\in \cK$ with $B = n\cup \{\sft\}$ and $\bB|_{n} = \bA|_{n}$ for some $n< 1+|A|$. The tree order is simply that of substructure; note that $\bB\in \bbT^\bA_\cK(n)$ iff $B\setminus \{\sft\} = n$. In addition to the tree relations, we also equip $\bbT^\bA_\cK$ with the unary relation $\cdnd$, where if $\bB\in \rm{tp}^\bA_\cK$, then $\bB\in \cdnd(\bbT^\bA_\cK)$ iff for some $n< \omega$ we have $\bB\cong \bA|_{n+1}$ via the map $i\to i$ $(i< n)$ and $\sft\to n$. Notice that there is exactly one coding node per level. We then equip the coding nodes with relations from $\cL_\bK$ so that $(\bbT^\bA_\cK|_{\cdnd(\bbT^\bA_\cK)})|_{\cL_\bK}\cong \bA$ in the obvious way. 
    
    When $\cL_\bK$ is binary, then by re-encoding $\bK$, we can suppose $\cL_\bK\setminus \{\cdnd\} = \{U_j: j < k^\sfu\}\cup \{R_i: 0< i< k\}$ for some $k, k^\sfu< \omega$ with the $U_j$ unary and the $R_i$ binary, that $\neg R_i^\bK(a, a)$ holds for every $a\in K$, that $\{U_j(\bK): j< k^\sfu\}$ is a partition of $\bK$, and that the sets $\{R_i^\bK: 0< i< k\}$ are pairwise disjoint. In this case, one can encode members of $\rm{tp}^\bA_\cK$ as members of $k^\sfu\times k^{<\omega}$, where for a given $\bB\in \bbT^\bA_\cK(n)$, one simply encodes the unary relation on $\sft$ and the binary relations between the vertex $\sft$ and the vertices $0, \ldots, n-1$ in $\bB$. Form the tree 
    $$k^\sfu \times \bT_k := \la k^\sfu\times k^{<\omega}, \sqsubseteq, \wedge, \leq_{ht}, \lexeq, (U_j)_{j< k^\sfu}, (R_i)_{0< i< k}\ra$$ 
    by placing $k^\sfu$-many copies of $\bT_k$ side by side, equipping each $\bT_k$ with exactly one of the unary relations, and extending $\lexeq$ in the obvious way. Then the tree structure on $\bbT^\bA_\cK$ is induced from $k^\sfu\times \bT_k$, and the binary relations of the copy of $\bA$ induced on $\cdnd(\bbT^\bA_\cK)$ are given by the passing number relations $\{R_i: 0< i< k\}$.

    We also discuss another variant of the coding tree of $1$-types, called the \emph{unary-colored coding tree of 1-types} in \cite{CDP_SDAP1, CDP_SDAP2} and there denoted $\bbU$, or what was in \cite{Zucker_BR_Upper_Bound} called $\ct^\bA$. Here, we call this tree $\bbU^\bA_\cK$. We relax our assumptions to allow $\cL_\bK$ to possibly contain infinitely many unary symbols, but we demand that for every $a\in K$, there is exactly one unary $U\in \cL_\bK\setminus \{\cdnd\}$ with $a\in U(\bK)$. The tree $\bbU^\bA_\cK$ has underlying set $\rm{tp}^-(\bA, \cK)$, where members of $\rm{tp}^-(\bA, \cK)$ are $\cL_\bK$-structures $\bB$ with $B = n\cup\{\sft\}$, $\bB|_n = \bA|_n$, $\sft\not\in U(\bB)$ for any unary $U\in \cL_\bK\setminus \{\cdnd\}$, but there is some unary $U\in \cL_\bK$ such that upon equipping $\sft$ with unary $U$, we obtain a structure in $\cK$. In addition to the unary $\cdnd$ relation, we endow the coding nodes with unary relations from $\cL_\bK$ according to the unaries in $\bA$. When $\cL_\bK$ is binary and correctly encoded as above, we can encode members of $\rm{tp}^-(\bA, \cK)$ as members of $k^{<\omega}$, and the tree structure on $\bbU^\bA_\cK$ is induced from that of $\bT_k$. 
\end{defin}

\begin{exa}
    \label{Exa:BinaryFreeAmalg}
    Recall that a structure $\bA$ is called \emph{irreducible} if every $a\neq b\in \bA$ participates in some relation of $\bA$. Equivalently, this happens exactly when $\bA$ is not a free amalgam of two proper substructures. Given $\cL\subseteq \bbL$ and $\cF$ a set of finite, irreducible $\cL$-structures, we define $\rm{Forb}^\cL(\cF)$ as the class of finite $\cL$-structures $\bA$ for which $\bF\not\leq \bA$ for every $\bF\in \cF$.  Recall that a \fr class $\cK$ has \emph{free amalgamation} iff $\cK$ has the form $\rm{Forb}^\cL(\cF)$ for some $\cL\subseteq \bbL$ and set $\cF$ of finite, irreducible $\cL$-structures.
    
    Now suppose $\cK$ is a \fr binary free amalgamation class with enumerated limit $\bK$ and with $\cL_\bK$ finite. We assume that the enumeration of $\bK$ is \emph{left dense} (see \cite{Zucker_BR_Upper_Bound}), meaning that above any $v\in \rm{tp}^\bK_\cK\subseteq k^\sfu\times k^{<\omega}$, one can extend $v$ by adding a string of zeros to reach some member of $\cdnd(\bbT^\bK_\cK)$. To show that $(\bbT^\bK_\cK)^{*\rm{All}}$ satisfies a large fragment of IRT, we need to add more structure to the coding tree of $1$-types. Given an enumerated $\bA\leq \bK$, we form the \emph{aged coding tree of $1$-types}, which here we denote by $\bb{AT}^\bA_\cK$. This expansion adds new relations (not necessarily binary) that can hold on level tuples from $\bbT^\bA_\cK$. When $\bA$ is finite, the extra relations on $\bb{AT}^\bA_\cK$ can be viewed as describing what configurations of coding nodes could possibly appear in $\bbT^\bB_\cK$  above a given level tuple of nodes from $\bbT^\bA_\cK$, where $\bB\leq \bK$ is an enumerated structure with $\bB|_{A} = \bA$.
    
    The idea of adding extra structure to levels of trees is implicit in Dobrinen's work on the triangle-free and $k$-clique-free Henson graph \cite{Dobrinen_3Free, Dobrinen_Henson} via the concept of \emph{parallel 1s}. Indeed, if two vertices $x$ and $y$ in a triangle free graph are both adjacent to some vertex $z$, then $x$ and $y$ cannot be adjacent. Notationally, we remark that in \cite{Zucker_BR_Upper_Bound, BCDHKVZ, DZ}, instead of forming the expansion $\bb{AT}^\bA_\cK$ explicitly, these references refer to a submonoid of embeddings called \emph{aged embeddings}, written $\aemb(\ldots)$. Here, we instead expand the structure, so that $\aemb(\bbT^\bA_\cK, \bbT^\bB_\cK) = \emb(\bb{AT}^\bA_\cK, \bb{AT}^\bB_\cK)$.

    Crucially, the assumption that $\bK$ is left dense implies that $\bb{AT}^\bK_\cK/\bbT^\bK_\cK$ is embedding faithful. This expansion yields the following generalization of Milliken's theorem.
    \begin{theorem}[Theorem 3.5 of \cite{Zucker_BR_Upper_Bound}]
        \label{Thm:Aged_Milliken}
        $\rm{BRD}(\bb{AT}^\bA_\cK, \bb{AT}^\bK_\cK) = 1$ for every enumerated $\bA\in \cK$.
    \end{theorem}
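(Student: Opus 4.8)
The plan is to follow the standard two-phase route behind Milliken-type theorems: establish a Halpern-L\"auchli-style ``one new level at a time'' lemma for the aged coding tree, prove it by Harrington's forcing argument adapted to the aged setting, and then glue the levels together by a fusion (diagonalization) argument, taking extra care to track the coding nodes and the higher-arity aged relations throughout. A few reductions come first. Since $\bb{AT}^\bK_\cK/\bbT^\bK_\cK$ is embedding faithful (this is exactly where left density is used), $\emb(\bb{AT}^\bA_\cK, \bb{AT}^\bK_\cK)$ is precisely the monoid of aged embeddings, which is the class of maps to be homogenized. By the color-fusing remark in Definition~\ref{Def:BRD} it suffices to show $\bb{AT}^\bK_\cK \to (\bb{AT}^\bK_\cK)^{\bb{AT}^\bA_\cK}_{2,1}$, i.e.\ to handle a $2$-coloring $\chi$; and since $\bb{AT}^\bA_\cK$ involves only finitely many vertices (and, being equipped with the level order and the coding-node predicate, is rigid), we may assume $\chi$ factors through the finite induced substructure of $\bb{AT}^\bK_\cK$ together with its placement in the level order.

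Next I would build the monochromatic copy of $\bb{AT}^\bK_\cK$ inside itself by recursion on levels. At stage $n$ one has committed to a finite aged coding subtree $\bS_n \subseteq \bb{AT}^\bK_\cK$ spanning $n$ chosen levels, together with a designation of the coding node on each coding level already passed; $\bS_{n+1}$ end-extends $\bS_n$, and $\bigcup_n \bS_n$ will be the image of the desired $\eta \in \emb(\bb{AT}^\bK_\cK)$. Passing from $\bS_n$ to $\bS_{n+1}$ requires two things: (a) choosing the next level of the copy, i.e.\ a consistent assignment of an immediate successor above each top node of $\bS_n$ (respecting the passing-number and aged constraints), possibly designating one as a coding node, so that the age is preserved; and (b) arranging that all copies of $\bb{AT}^\bA_\cK$ whose top vertex lands on this new level receive a single $\chi$-color. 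For (a), the free amalgamation hypothesis ($\cK = \rm{Forb}^\cL(\cF)$ with $\cF$ irreducible) guarantees that freely extending a $1$-type over an initial segment never creates a forbidden configuration, so a consistent choice of successors always exists; and left density lets one reach a coding node by appending zeros, so coding nodes can be placed cofinally and the resulting infinite tree genuinely has age $\cK$ and codes $\bK$ on its coding nodes.

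The heart of the argument is (b), a Halpern-L\"auchli-type theorem for the aged coding tree: given the finitely many ``shapes'' in which a copy of $\bb{AT}^\bA_\cK$ can be completed by a single new level above $\bS_n$, and a $2$-coloring of all such completions, there is a strong aged subtree above the top level of $\bS_n$ on which the coloring of each shape class is constant. I would prove this by Harrington's forcing argument in its aged incarnation: force with finite approximations to an $\omega$-indexed system of level sets above the relevant nodes, the conditions being required to respect the aged relations and the partition into unary classes; a density/genericity computation fusing the finitely many names for the color values yields, for each shape, a single generic color realized on a sufficiently rich system of branches; then a ground-model absoluteness/compactness extraction pulls this back to an honest strong subtree. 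Combining this with a pigeonhole over the finitely many shapes and finitely many colors, and fusing across the stages of the recursion so that the same color is used at every level, produces $\eta$ with $\chi$ constant on $\eta\circ \emb(\bb{AT}^\bA_\cK, \bb{AT}^\bK_\cK)$. I expect the main obstacle to be precisely this forcing step in the aged setting: one must verify that the poset of finite aged level-set approximations remains homogeneous enough for Harrington's argument, and --- most delicately --- that the extracted subtree still has age exactly $\cK$ rather than a proper subclass, which is where left density, free amalgamation, and the cofinal placement of coding nodes interact. A secondary but genuine subtlety is bookkeeping the coding nodes through the fusion so that the final infinite object is literally isomorphic to $\bb{AT}^\bK_\cK$ and not merely to some aged coding subtree of it, and checking that the higher-arity aged relations are inherited correctly along the way.
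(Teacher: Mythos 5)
You should first note that the survey does not actually prove this statement: it is imported verbatim as Theorem~3.5 of the cited reference \cite{Zucker_BR_Upper_Bound}, and no proof appears anywhere in this paper. So there is no in-paper argument to compare yours against; the relevant comparison is with the original source. Your outline does match the strategy used there --- a Halpern--L\"auchli-type pigeonhole for adding one level at a time, proved by a Harrington/Dobrinen-style forcing argument adapted to carry the aged relations and coding-node bookkeeping, followed by a fusion across levels --- and you correctly identify where left density and free amalgamation enter (embedding faithfulness of $\bb{AT}^\bK_\cK/\bbT^\bK_\cK$, existence of consistent successor choices, cofinal placement of coding nodes) and where the genuine difficulty lies (the density computation in the forcing poset and the verification that the extracted subtree has age exactly $\cK$). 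Two caveats. First, what you have written is a plan, not a proof: the entire mathematical content of the theorem is concentrated in the forcing step and the age-preservation check, both of which you defer, so this cannot be counted as a proof of the statement. Second, your remark that one ``may assume $\chi$ factors through the finite induced substructure of $\bb{AT}^\bK_\cK$'' is not a legitimate reduction --- the whole point of the theorem is that an arbitrary coloring of $\emb(\bb{AT}^\bA_\cK, \bb{AT}^\bK_\cK)$ can be made constant on a copy, and nothing lets you assume in advance that $\chi$ depends only on isomorphism-type data; rigidity of $\bb{AT}^\bA_\cK$ only lets you pass between colorings of embeddings and colorings of copies. These caveats aside, the skeleton is the right one.
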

    We note that the above discussion works similarly for $\bbU^\bA_\cK$, thus yielding the \emph{aged unary-colored coding tree of $1$-types} $\bb{AU}^\bA_\cK$; this is actually what is used in \cite{Zucker_BR_Upper_Bound}. Theorem~\ref{Thm:Aged_Milliken} gives us a large set of big Ramsey objects, and as in the previous section, one needs to check that this structure admits envelopes.
    \begin{theorem}
        \label{Thm:Aged_Envelopes}
        Letting $\cC = \{\bb{AT}_\cK^\bA: \bA\in \cK \text{ enumerated}\}$, we have that $\bb{AT}_\cK^\bK$ admits $\cC$-envelopes. Thus $(\bb{AT}_\cK^\bK)^{*\cC}$ satisfies IRT.
    \end{theorem}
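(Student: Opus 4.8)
The plan is the following. The statement that $(\bb{AT}_\cK^\bK)^{*\cC}$ satisfies IRT is immediate from Proposition~\ref{Prop:Envelopes} once we know that $\bb{AT}_\cK^\bK$ admits $\cC$-envelopes, since Theorem~\ref{Thm:Aged_Milliken} says exactly that $\cC\subseteq \rm{BRO}(\bb{AT}_\cK^\bK)$. So the whole content is to verify the hypothesis of Definition~\ref{Def:Envelope_Expansion}, i.e.\ that every $g\in\emb(\bB,\bb{AT}_\cK^\bK)$ (with $\bB\in\age(\bb{AT}_\cK^\bK)$ finite) realizes a unique $\cC$-envelope up to isomorphism. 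This will be entirely parallel to the Milliken-envelope argument behind Theorem~\ref{Thm:Milliken_Envelopes}, where the $\cC$-shape of $g$ is pinned down by the relative heights of the nodes in the meet-closure of $\im(g)$; the one genuinely new ingredient is the extra layer of structure carried by $\bb{AT}_\cK^\bK$ -- the coding nodes together with the aged relations -- which must also be shown to be an invariant of that closure.

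For the construction, fix $g$ and put $X:=\ol{\im(g)}^{\bb{AT}_\cK^\bK}$, the ML-closure, a finite subtree carrying the induced passing-number, unary, $\cdnd$, and aged relations. First I would record the invariants of $X$: the set $H$ of heights it attains, say $|H|=N$; which of those levels host a $\cdnd$-node; the finite $\cL_\bK$-structure $\bA_X$ induced by those coding nodes; and the passing-number and aged data of $X$. From this I would build a canonical enumerated $\bA\in\cK$ with $|A|=N$ together with $f\in\emb(\bB,\bb{AT}_\cK^\bA)$: collapse $H$ order-isomorphically onto $\{0,\ldots,N-1\}$, on the levels hosting a coding node of $X$ place the corresponding vertices of $\bA_X$ (so those vertices of $\bA$ induce a copy of $\bA_X$), and on the remaining ``filler'' levels place a new vertex realizing the minimal $1$-type over the vertices already chosen. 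That this minimal type is always available, so that $\bA\in\cK=\rm{Forb}^\cL(\cF)$, is where the hypotheses of Example~\ref{Exa:BinaryFreeAmalg} are used: adding a vertex in no relation cannot create a forbidden irreducible substructure, and left-density guarantees that such filler coding nodes (reached by strings of zeros) really occur in $\bbT^\bK_\cK$. The collapse map transports $X$ into $\bbT^\bA_\cK$ -- each non-coding node of $X$ has a matching $1$-type node at the corresponding level because $\bbT^\bA_\cK$ contains every $1$-type over an initial segment of $\bA$ -- and restricting to $\bB$ gives $f$, which one checks respects passing numbers, unaries, $\cdnd$, and the aged relations. Conversely, since the fillers were chosen minimally and $\bK$ is a \fr limit (hence universal for $\cK$), one extends the inverse of the collapse map on the copy of $X$ to an aged embedding $h\in\emb(\bb{AT}_\cK^\bA,\bb{AT}_\cK^\bK)$; then $g=h\circ f$, so $g$ realizes $(f,\bb{AT}_\cK^\bA)$.

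Minimality and uniqueness I would prove together. If $(f',\bb{AT}_\cK^{\bA'})$ is any $\cC$-extension of $\bB$ realized by $g$ via $h'\in\emb(\bb{AT}_\cK^{\bA'},\bb{AT}_\cK^\bK)$, then $\im(h')$ is a subtree containing $\im(g)$, hence containing $X$. Since an embedding preserves and reflects $\leq_{ht}$ and $\cdnd$, $\bA'$ has at least $N$ levels, the coding nodes of $X$ pull back to coding nodes of $\bbT^{\bA'}_\cK$ inducing a copy of $\bA_X$, and the intervening filler levels of $\bbT^{\bA'}_\cK$ carry $1$-types that, by the aged relations recorded at the meet nodes of $X$, must extend the minimal ones. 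Consequently $\bb{AT}_\cK^\bA$ embeds into $\bb{AT}_\cK^{\bA'}$ over $f$, so $(f,\bb{AT}_\cK^\bA)\leq(f',\bb{AT}_\cK^{\bA'})$; in particular $(f,\bb{AT}_\cK^\bA)$ is $\leq$-minimal and is, up to isomorphism, the unique $\cC$-envelope realized by $g$. (The same argument, verbatim, handles the unary-colored variant $\bb{AU}_\cK^\bA$.)

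The step I expect to be the main obstacle is the handling of the aged relations, which is precisely what distinguishes this from the plain Milliken situation. One must show (i) that the aged relations of $\bb{AT}_\cK^\bA$ on the filler levels are \emph{forced} by those recorded on $X$, so that no strictly larger choice of filler type is ever realized by $g$, and (ii) that aged embeddings compose and restrict well enough for the canonical copy of $X$ inside $\bbT^\bA_\cK$ to sit inside every $\bbT^{\bA'}_\cK$ realized by $g$. Both facts lean on the precise definition of the aged coding tree from Example~\ref{Exa:BinaryFreeAmalg} and on the left-dense enumeration; in effect one re-derives, for the aged layer, the same ``determined by bounded-size subconfigurations'' phenomenon that makes the ordinary Milliken envelope finitary in Theorem~\ref{Thm:Milliken_Envelopes}. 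Once these compatibility properties of the aged relations are established, everything else is bookkeeping with the collapse map.
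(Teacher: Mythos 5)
The paper itself states Theorem~\ref{Thm:Aged_Envelopes} without proof (it is survey material, with the underlying construction deferred to \cite{Zucker_BR_Upper_Bound, BCDHKVZ}), so I am judging your argument on its own terms. Your reduction of the second sentence to Proposition~\ref{Prop:Envelopes} via Theorem~\ref{Thm:Aged_Milliken} is correct, and the overall architecture --- pass to the ML-closure $X$ of $\im(g)$, collapse its set of heights $H$ (say $|H|=N$) to $\{0,\dots,N-1\}$, and show that every extension realized by $g$ factors through the collapsed object --- is the right one.

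There is, however, a genuine error in your construction of the envelope: the ``filler'' coding nodes are not yours to choose. An embedding $h\colon \bb{AT}^\bA_\cK\to \bb{AT}^\bK_\cK$ preserves $\cdnd$ in both directions and maps levels into levels; since both trees carry exactly one coding node per level, the coding node of $\bbT^\bA_\cK$ at level $i$ must be sent to \emph{the} coding node of $\bbT^\bK_\cK$ at the target level $n_i$, and since embeddings also preserve the $\cL_\bK$-relations placed on the coding nodes, the map $i\mapsto n_i$ is forced to be an embedding of $\bA$ into $\bK$. If $g=h\circ f$, then $\im(h)$, being meet- and level-closed, contains $X$, so $\{n_i : i<|A|\}\supseteq H$; when $|A|=N$ this forces equality, hence $\bA\cong\bK|_H$. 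So the unique envelope realized by $g$ is $\bb{AT}^{\bA}_\cK$ for $\bA$ the (re-enumerated) induced substructure $\bK|_H$ --- its vertices at the non-coding levels of $X$ are the \emph{actual} vertices of $\bK$ at those heights, with whatever relations they happen to have over the rest of $H$ --- and not your structure with new vertices of minimal $1$-type, which in general is simply not realized by $g$ at all (already for two non-coding nodes of $\bbT^\bK_\cK$ whose heights index an edge of the Henson graph, your proposed envelope is a non-edge and cannot be realized). The same misconception infects your uniqueness paragraph and your closing remark: precisely because the envelope remembers the full induced structure $\bK|_H$ on all intermediate coding levels, one does \emph{not} recover the bounded-size determinacy of the Milliken case --- the paper explicitly notes immediately after the theorem that $(\bb{AT}^\bK_\cK)^{*\cC}/\bb{AT}^\bK_\cK$ is not in general finitary, which your picture would contradict. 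The remaining genuine work, which you correctly sense but attach to the wrong object, is the existence half: extending the collapse of $X$ to a full aged embedding of $\bb{AT}^{\bK|_H}_\cK$ into $\bb{AT}^\bK_\cK$; this is where left density and the actual definition of the ages must be used.
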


    Unlike Theorem~\ref{Thm:Milliken_Envelopes}, $(\bb{AT}_\cK^\bK)^{*\cC}/\bb{AT}_\cK^\bK$ is not in general finitary. This might seem like an insurmountable obstacle. However, by adding extra assumptions to $\cK$, we can find a work-around. As we are assuming $\cK$ is a binary free amalgamation class, we have $\cK = \rm{Forb}^{\cL_\cK}(\cF)$ for some set $\cF$ of finite irreducible $\cL_\bK$-structures. We say that a free amalgamation class is \emph{finitely constrained} if we can take $\cF$ to be finite. From now on, assume $\cK$ is finitely constrained. Then, writing $\bM = \bbT^\bK_\cK|_{\cL_\bK}$, one can find $\phi\in \emb(\bK, \bM)$ so that $((\bb{AT}^\bK_\cK)^{*\cC}{\cdot}\phi)/\bK$ is finitary.
    
    In particular, \emph{there exists} some $\phi$ as above so that  $((\bb{AT}^\bK_\cK)^{*\cC}{\cdot}\phi)/\bK$ is $I$-finitary and $I$-unavoidable for some finite $I\subseteq \cK$; thus Theorem~\ref{Thm:Implies_Finite_Recurrent_BRS} implies that $\bK$ admits a finitary, recurrent BRS. However, in order to \emph{characterise} the finitary, recurrent BRSs of $\bK$, one must explicitly build such a $\phi$. This is the work undertaken in \cite{BCDHKVZ}. In particular, the \emph{diaries} defined and constructed in \cite{BCDHKVZ} are abstractions of the key features that the image of such a $\phi$ must have. It would be interesting to reprove the bi-embeddability result for diaries from \cite{BCDHKVZ} in the style of our proof from Example~\ref{Exa:Rado} that LSV trees are bi-embeddable.

    Another approach, taken for instance in \cite{CDP_SDAP1}, \cite{CDP_SDAP2}, \cite{Dobrinen_SDAP_Inf} and \cite{DZ}, is to postpone proving Ramsey theorems until after constructing $\phi$ as above. This is closer in spirit to the second extreme discussed before Definition~\ref{Def:Tree_of_1types}. Regarding finitely constrained binary free amalgamation classes, Dobrinen and Zucker build particularly nice diaries called \emph{strong diaries} and prove directly using forcing that these satisfy large fragments of IRT.

    This concludes the discussion of Example~\ref{Exa:BinaryFreeAmalg}.
\end{exa}

\begin{exa}
\label{Exa:CDP}
    In the papers \cite{CDP_SDAP1, CDP_SDAP2}, Coulson, Dobrinen, and Patel isolate, in decreasing order of strength, the amalgamation properties $\rm{SFAP}$, $\rm{SDAP}^+$, and $\rm{LSDAP}^+$ that a \fr class $\cK$ might enjoy. We refer there for the definitions, but the idea is that for binary $\rm{LSDAP}^+$ classes, it is straightforward to modify $\bbU^\bK_\cK$ into a recurrent big Ramsey structure for $\bK$, and one can prove IRT directly on this big Ramsey structure using forcing. This results in a characterisation of big Ramsey degrees and recurrent big Ramsey structures of a particularly simple nature. Fix a \fr class $\cK$ with $\rm{LSDAP}^+$ along with an enumerated \fr limit $\bK$. We assume $\cL_\bK$ is finite, that $\{U\in \cL_\bK\setminus \{\cdnd\}: U \text{ unary}\} = \{U_j: j< k^\sfu\}$ for some $k^\sfu < \omega$, and that for each $a\in K$, there is exactly one $j< k^\sfu$ with $a\in U_j(\bK)$. The authors show that there is a partition $\Pi$ of $k^\sfu$ such that, writing $\bM = \bbU^\bK_\cK|_{\cL_\bK}$, then if $\phi\in \emb(\bK, \bM)$ has the properties that 
    \begin{itemize}
        \item 
        $\im(\phi)$ is an antichain in $\bbU^\bK_\cK$,
        \item 
        for some $\ell< \omega$ with $\rm{ht}_{\bbU^\bK_\cK}(\phi(a)) > \ell$ for every $a\in K$, there are distinct $\{v_P: P\in \Pi\}\subseteq \bbU^\bK_\cK(\ell)$ such that $\phi(a)|^{\bbU^\bK_\cK}_\ell = v_P$ iff $a\in U_j(\bK)$ for some $j\in P$,
    \end{itemize}
    then $((\bbU^\bK_\cK){\cdot}\phi)/\bK$ is recurrent (and finitary). Furthermore, if $\cL_\bK$ is binary, then the authors prove directly using forcing that $(\bbU^\bK_\cK){\cdot}\phi$ satisfies IRT, thus yielding a BRS. When $\bK$ is not binary, the authors show that vertices are big Ramsey objects (i.e.\ that the \fr limits are \emph{indivisible}).
    
    Examples of binary $\rm{LSDAP}^+$ classes include the class of finite linear orders with a vertex partition into $k^\sfu$-many unaries, whose BRDs were characterised in earlier work of Laflamme--Nguyen Van Th\'e--Sauer \cite{L_NVT_S_2010}, the class of finite graphs with a vertex partition into $k^\sfu$-many unaries (in particular, when $k^\sfu = 1$, they obtain another proof of the bi-embeddability result discussed in Example~\ref{Exa:Rado}), and the class of finite linear orders equipped with a convex equivalence relation. For the partitioned orders, we have $\Pi = \{k^\sfu\}$, while for the partitioned graphs, we have $\Pi = \{\{j\}: j< k^\sfu\}$. The idea for this difference is that in graphs, we can use the edge relation with respect to some ``external'' vertex to separate the unaries, but in linear orders, this is not possible using an external vertex and the order relation. Thus for the partitioned orders, BRSs for the \fr limit are described by Devlin trees where the coding nodes get an additional unary label, whereas for the partitioned graphs, the BRSs for the \fr limit are described by LSV trees with $k^\sfu$-many labeled roots. For the linear orders with convex equivalence relations, the BRSs for the \fr limit are simply described by Devlin trees $\bS$ equipped with a $\lex$-convex equivalence relation $\sim$ on the coding nodes with the property that if $a\lex b\lex c \in \cdnd(\bS)$, $a\sim b$, and $b\not\sim c$, then $a\wedge c <_{ht} a\wedge b$, and if instead $a\not\sim b$ and $b\sim c$, then $a\wedge c <_{ht} b\wedge c$.

    For many more examples of \fr structures satisfying these amalgamation properties, we refer to the catalog at the end of \cite{CDP_SDAP2}.
\end{exa}

\section{Parameter space methods and generalizations}
\label{Sec:Parameter_Space}

As discussed at the end of Section~\ref{Sec:Milliken}, applications of Milliken's theorem are limited to big Ramsey degrees
of unrestricted structures. For certain restricted structures, it is possible to apply the Carlson--Simpson theorem~\cite{carlson1984}
in the place of Milliken's tree theorem. This technique was introduced in~\cite{Hubicka2020CS} and can be applied to structures
with unary and binary relations which are \emph{triangle constrained}, i.e.\ described by certain families of forbidden substructures with at most 3 vertices.

Fix $k>0$ and $\sigma\subseteq \{\vec{s}\in k^{{<}\omega}: |\vec{s}\,|>0\}$. Here we use tuple notation $\vec{s}$ to discuss members of $k^{<\omega}$ which we don't want to think of as members of $\bT_k$, and $|\vec{s}\,|$ denotes the length of $\vec{s}$ (which we don't want to think of as height in a tree). Indeed, we expand the structure $\bT_k$ introduced in Section~\ref{Sec:Milliken} into:
$$\bT^\sigma_k:= \la k^{{<}\omega}, \sqsubseteq, \wedge, \leq_{ht}, \lexeq, (R_i)_{0< i< k}, (R_{\vec{s}})_{\vec{s}\in \sigma}\ra.$$
where the relations $\sqsubseteq, \wedge, \leq_{ht}, \lexeq, (R_i)_{0< i< k}$ are as in Section~\ref{Sec:Milliken}.
For each $\vec{s}\in \sigma$, the relation $R_{\vec{s}}$ has arity $|\vec{s}\,|$ and we put $(t_0,t_1,\ldots,t_{|\vec{s}|-1})\in R_{\vec{s}}$ if and only if 
there exists $\ell<\min_{i<|\vec{s}\,|}\rm{ht}(t_i)$ such that $t_i(\ell)=\vec{s}(i)$ for every $i<|\vec{s}\,|$.

We can again expand further by adding a unary predicate $U$ to nodes of level less than $m$, and we write $\bT^\sigma_{k, m}$ for this structure.
For $\sigma=\{\vec{s}\in k^{{<}\omega}: |\vec{s}\,|>0\}$ the following is a direct consequence of the Carlson--Simpson theorem~\cite{carlson1984}.
For other choices of $\sigma$ this follows from~\cite{Balko2023Sucessor}.
\begin{theorem}
	\label{Thm:CS}
	For any $k, m< \omega$, we have $$\{\bT^\sigma_{k, m}({<}n): m\leq n< \omega\}\subseteq \rm{BRO}(\bT^\sigma_{k, m}).$$ 
\end{theorem}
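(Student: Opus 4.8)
\noindent\emph{Proof proposal.} The plan is to reduce the statement to the infinitary Carlson--Simpson theorem \cite{carlson1984} when $\sigma=\{\vec{s}\in k^{{<}\omega}:|\vec{s}\,|>0\}$ is the full set, and to its refinement from \cite{Balko2023Sucessor} for a general $\sigma$, in direct analogy with the way Milliken's tree theorem yields Theorem~\ref{Thm:Milliken}. First I would dispose of the unary predicate $U$ marking the bottom $m$ levels of $\bT^\sigma_{k,m}$. Since $U$ and the preorder $\leq_{ht}$ are preserved by any $\eta\in\emb(\bT^\sigma_{k,m})$, such an $\eta$ must send level $i$ to level $i$ for every $i<m$; and since each node of height $<m$ has its $k$ immediate successors pinned down by the passing-number relations $(R_i)_{0<i<k}$, an easy induction on levels shows that $\eta$ is the identity on $\bT^\sigma_{k,m}({<}m)$. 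In particular, for $n=m$ the set $\emb(\bT^\sigma_{k,m}({<}m),\bT^\sigma_{k,m})$ is a singleton and $\rm{BRD}(\bT^\sigma_{k,m}({<}m),\bT^\sigma_{k,m})=1$ trivially, while for $n>m$ we are reduced to a Ramsey statement about the part of the tree strictly above the frozen prefix.

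Next I would set up the dictionary between embeddings and combinatorial subspaces (parameter words). A map $f\in\emb(\bT^\sigma_{k,m}({<}n),\bT^\sigma_{k,m})$ must send levels to levels and respect $\sqsubseteq$, $\wedge$, $\lexeq$, the passing numbers $(R_i)$, and --- crucially --- the column relations $(R_{\vec{s}})_{\vec{s}\in\sigma}$. When $\sigma$ is the full set, the relation $R_{\vec{s}}$ records \emph{every} column pattern occurring below a tuple, so preserving all of them forces the non-branching content along $\im(f)$ to be completely determined: such an $f$ is exactly a combinatorial subspace of $k^{{<}\omega}$ of the relevant finite dimension sitting above the frozen prefix. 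Likewise an $\eta\in\emb(\bT^\sigma_{k,m})$ is exactly an infinite-dimensional combinatorial subspace, and for such an $\eta$ the set $\eta\circ\emb(\bT^\sigma_{k,m}({<}n),\bT^\sigma_{k,m})$ is precisely the family of its sub-subspaces of the corresponding dimension. The infinitary Carlson--Simpson theorem, applied at that dimension, produces for any finite coloring of the relevant combinatorial subspaces an infinite-dimensional subspace all of whose sub-subspaces of that dimension receive one color; translating back, this is exactly an $\eta\in\emb(\bT^\sigma_{k,m})$ on which any prescribed finite coloring $\chi$ of $\emb(\bT^\sigma_{k,m}({<}n),\bT^\sigma_{k,m})$ is constant, i.e.\ $\rm{BRD}(\bT^\sigma_{k,m}({<}n),\bT^\sigma_{k,m})=1$.

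For a general $\sigma$ the same scheme applies, but one extra point must be handled. Combinatorial subspaces still form a cofinal subfamily of $\emb(\bT^\sigma_{k,m}({<}n),\bT^\sigma_{k,m})$, since any subspace preserves all column content and hence in particular the $\sigma$-patterns; but an arbitrary $\sigma$-pattern-preserving embedding that happens to land inside an infinite combinatorial subspace $\eta$ need not itself be a sub-subspace of $\eta$, because it is required to respect only the patterns recorded by $\sigma$. Thus the bare Carlson--Simpson theorem does not immediately give monochromaticity on all of $\eta\circ\emb(\bT^\sigma_{k,m}({<}n),\bT^\sigma_{k,m})$. This is exactly the point where the strengthening of \cite{Balko2023Sucessor}, which controls the first occurrences of the relevant $\sigma$-patterns rather than all content, is invoked in place of \cite{carlson1984}; modulo that input, the translation back and forth is the same as in the full case.

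I expect the main obstacle to be making this dictionary precise enough that the Ramsey input applies verbatim: one must verify that the relations of $\bT^\sigma_{k,m}$ force images of embeddings to have exactly the content-rigid shape of a (sub)space, that the relevant family of subspaces is cofinal and closed under the sub-subspace operation, and --- in the general-$\sigma$ case --- identify exactly which successor/first-occurrence form of the Carlson--Simpson theorem from \cite{Balko2023Sucessor} suffices to capture every $\sigma$-pattern-preserving embedding into the monochromatic subspace. By contrast, the treatment of the frozen $m$-level prefix and the observation that $\lexeq$, being definable from the remaining relations, causes no additional difficulty, are routine.
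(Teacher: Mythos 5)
Your overall route is the same as the paper's: Theorem~\ref{Thm:CS} is stated there without a self-contained proof, the full-$\sigma$ case being attributed directly to the Carlson--Simpson theorem \cite{carlson1984} and the general case to \cite{Balko2023Sucessor} (with the actual verification carried out in \cite{Hubicka2020CS} and \cite{Balko2023Sucessor}). Your handling of the frozen prefix below level $m$ is correct, and you rightly identify the embeddings-versus-subspaces dictionary as the crux.

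However, that dictionary as you state it has a genuine error, in both directions. An embedding must preserve each $R_{\vec{s}}$ as a biconditional, and $R_{\vec{s}}$ is an \emph{existential} statement: some level below the tuple realizes the column pattern $\vec{s}$. A combinatorial subspace in the usual Carlson--Simpson sense may have constant letters in its defining parameter word; any such letter $a$ at position $\ell$ inserts the constant column $(a,\ldots,a)$ below every tuple lying above $\ell$, thereby creating instances of $R_{(a,\ldots,a)}$ that need not hold in the preimage. Since the full $\sigma$ (and the $\sigma$'s arising in applications, e.g.\ $\sigma=\{\la 11\ra\}$ for the triangle-free Henson graph) contains constant patterns, it is false that ``any subspace preserves all column content'': general combinatorial subspaces are \emph{not} embeddings of $\bT^\sigma_{k,m}$, so they are not a cofinal subfamily of $\emb(\bT^\sigma_{k,m}({<}n),\bT^\sigma_{k,m})$. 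Conversely, for the full $\sigma$ an embedding is forced to use \emph{only} parameter columns: applying preservation of the arity-$k^n$ relation to the entire level $\bT^\sigma_{k}(n)$ shows that every column of the image is a repeated coordinate column, so embeddings correspond to parameter words with no constant letters at all, a strictly smaller family than the combinatorial subspaces. Consequently the monochromatic infinite-dimensional subspace produced by the classical Carlson--Simpson conclusion cannot simply be ``translated back'': if it contains constant letters, neither it nor any of its sub-subspaces yields an embedding of $\bT^\sigma_{k,m}$, and one cannot purify it by passing to sub-subspaces since these inherit all constant letters. Controlling the inserted content relative to $\sigma$ is precisely the step requiring the specific (left-variable-word) form of the Carlson--Simpson theorem, respectively the strengthening of \cite{Balko2023Sucessor}; your proposal senses that something extra is needed for general $\sigma$ but locates the difficulty in the wrong place --- the problem is not that $\sigma$-preserving embeddings outnumber sub-subspaces, but that sub-subspaces of the monochromatic subspace need not be embeddings at all.
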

Once again, one also needs to verify that the structure admits envelopes (see~\cite{Hubicka2020CS} for verification).
\begin{theorem}
    \label{Thm:CS_Envelopes}
    Writing $\cC = \{\bT^\sigma_{k, m}({<}n): m\leq n< \omega\}$, we have that $\bT_{k, m}^\sigma$ admits $\cC$-envelopes. Thus $(\bT_{k, m}^\sigma)^{*\cC}$ satisfies IRT.
\end{theorem}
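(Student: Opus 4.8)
The second assertion requires no extra work once the first is in hand: Theorem~\ref{Thm:CS} gives $\cC\subseteq\rm{BRO}(\bT^\sigma_{k,m})$, so Proposition~\ref{Prop:Envelopes} applies verbatim and yields that $(\bT^\sigma_{k,m})^{*\cC}$ satisfies IRT. So the whole task is to show that $\bT^\sigma_{k,m}$ admits $\cC$-envelopes, and I would follow the route used for the Milliken case in Theorem~\ref{Thm:Milliken_Envelopes}; the details for this variant are in~\cite{Hubicka2020CS}.

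First I would fix $\bA\in\age(\bT^\sigma_{k,m})$ and $g\in\emb(\bA,\bT^\sigma_{k,m})$, set $X=\ol{\im(g)}^{\bT^\sigma_{k,m}}$ (the meet-and-level closure), and let $\bX$ be the induced substructure, so $g$ factors as $\bA\xrightarrow{g'}\bX\hookrightarrow\bT^\sigma_{k,m}$. The envelope is built by ``compressing'' $\bX$ onto an initial segment of consecutive levels: one designates a finite set $W\subseteq\omega$ of levels that must be retained --- namely $\{0,\dots,m-1\}$ (these are forced, since any embedding of $\bT^\sigma_{k,m}({<}n)$ into $\bT^\sigma_{k,m}$ preserves the unary predicate $U$ and hence is the identity on the bottom $m$ levels), every height realized by a node of $X$, and enough further levels so that every tuple from $X$ lying in some relation $R_{\vec{s}}$ still has a witnessing level inside $W$. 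Writing $n=|W|$, deleting from each node of $X$ the coordinates at levels outside $W$ and re-indexing gives an embedding $e\colon\bX\to\bT^\sigma_{k,m}({<}n)$; and, invoking the existence of sufficiently generic strong tree embeddings (the same input that makes Theorem~\ref{Thm:CS} true), one produces $h\in\emb(\bT^\sigma_{k,m}({<}n),\bT^\sigma_{k,m})$ with $h\circ e$ equal to the inclusion $\bX\hookrightarrow\bT^\sigma_{k,m}$. Then $g$ realizes the $\cC$-extension $(e\circ g',\bT^\sigma_{k,m}({<}n))$, and what remains is to check this extension is $\leq$-minimal and is, up to isomorphism of extensions of $\bA$, the only $\leq$-minimal one realized by $g$.

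Both of those come down to one statement: membership of a tuple of $X$ in each relation of $\bT^\sigma_{k,m}$ --- in particular in $R_{\vec{s}}$ --- depends only on the abstract tree-shape of $\bX$, the order in which the occupied heights sit, and which of the retained levels is a witness, never on the numerical values of the heights; granting this, every retained level is forced to appear in any $\cC$-extension realized by $g$, so $n$ is minimal, and with $n$ fixed the extension is pinned down because $\bT^\sigma_{k,m}({<}n)$ is the full level tree. For $\sqsubseteq,\wedge,\leq_{ht},\lexeq$ and the passing numbers $R_i$ this is the same bookkeeping as for $\bT_{k,m}$ (heights enter only through their order; $R_i$-data lives at meet-heights, which are occupied). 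For $R_{\vec{s}}$ one splits into the case where a witnessing level lies below the first level at which the tuple splits --- where all coordinates of the tuple agree there and their common value is read off from the iterated meet of the tuple, which lies in $X$, so the condition is that $\vec{s}$ be this common value at some retained level --- and the case where it lies at or above that first split level, which is arranged by the choice of $W$.

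The genuinely new difficulty over the Milliken argument, and the step I expect to be the main obstacle, is precisely this handling of the relations $R_{\vec{s}}$: because an $R_{\vec{s}}$-fact can be witnessed at arbitrarily low levels, the naive ``delete every unoccupied level'' compression used for $\bT_{k,m}$ can change the $R_{\vec{s}}$-type, so one must keep some extra levels, and keep exactly the right ones for the resulting extension to be $\leq$-minimal. A dual subtlety is the choice of $h$: the ``filler'' coordinates it inserts at the levels of $\bT^\sigma_{k,m}({<}n)$ lying strictly between retained levels must be chosen so as not to manufacture a spurious $R_{\vec{s}}$-fact. Both are handled in~\cite{Hubicka2020CS}, and I would cite that reference for the case analysis rather than reproduce it, since apart from this the argument tracks the treatment of Milliken envelopes in~\cite{Stevo_book}.
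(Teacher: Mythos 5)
Your proposal matches the paper's treatment: the paper gives no proof of this theorem beyond the remark ``see~\cite{Hubicka2020CS} for verification'' of the envelope property, with the IRT conclusion following exactly as you say from Theorem~\ref{Thm:CS} together with Proposition~\ref{Prop:Envelopes}. Your additional sketch of the ML-closure compression and the bookkeeping of witnessing levels for the relations $R_{\vec{s}}$ is a reasonable outline of what the cited verification involves and correctly identifies the point where the argument departs from the Milliken case of Theorem~\ref{Thm:Milliken_Envelopes}.
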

When the $\cC$ from Theorem~\ref{Thm:CS_Envelopes} is understood, we write $\bT_{k, m}^{\sigma *}$ for $(\bT_{k, m}^\sigma)^{*\cC}$.
\begin{exa}
	The \emph{triangle-free Henson graph} $\bK$ is the \fr limit of the class $\cK$ of all finite triange-free graphs. We put $\sigma=\{\la 11 \ra\}$.
	We will proceed similarly as in Example~\ref{Exa:Rado} and view graphs as symmetric, irreflexive binary structures. We however change the definition of the graph $\bM$: we have $M = 2^{{<}\omega}$, and a pair of vertices $u, v$ with $u<_{\rm{ht}} v$ forms an edge if and only if
	$(u, v)\in R^{\bT_2^{\sigma*}}_1$ and $(u, v)\notin R^{\bT_2^{\sigma*}}_{\la 11\ra }$. There are no edges between vertices of the same level. While technically this graph is not a reduct of $\bT_2^{\sigma*}$, it is a reduct of an embedding faithful expansion of it.  

	We claim that $\bM$ is triangle-free. Suppose towards the contrary that the vertices $u$, $v$, and  $w$ forms a triangle.  Clearly we can assume $u <_{\rm{ht}} v<_{\rm{ht}} w$.
	Because $(u,w)\in R^{\bT_2^{\sigma*}}_1$ and $(u, v)\in R^{\bT_2^{\sigma*}}_1$ we have $v(\dom(u))=w(\dom(u))=1$
	which implies that $(v, w)\in R^{\bT_2^{\sigma*}}_{\la 11 \ra}$ contradicting the fact that $v$ and $w$ forms an edge of $\bM$.

	Moreover $\bM$ is bi-embeddable with $\bK$.  To see that assume that the vertex set of $\bK$ is $\omega$ and assign every vertex $i\in \omega$ a sequence $\varphi(i)\in 2^i$ where $\varphi(i)(j)=1$ if and only if $i$ and $j$ forms
	an edge of $\bK$.  It is easy to check that this is an embedding $\varphi:\bK\to \bM$ since for every $i<j\in \omega$ it holds that $(\varphi(i),\varphi(j))\notin R^{\bT_2^{\sigma*}}_{\la 11\ra }$ (which follows from the fact that $\bK$ is triangle-free).
	Then $\bT_2^{\sigma*}{\cdot}\phi$ is an expansion of $\bK$. 

	By Theorems~\ref{Thm:CS} and~\ref{Thm:CS_Envelopes} we obtain that the big Ramsey degrees of $\bK$ are finite.  The precise characterisation of big Ramsey structure was independently obtained by Balko, Chodounsk{\' y}, Hubi{\v c}ka, Kone{\v c}n{\' y}, Vena and Zucker and by Dobrinen~\cite{dobrinen2020ramsey}.
	Self-contained presentations of this characterisation appear in both~\cite[Example 3.4.4]{BCDHKVZ} and \cite[Definition 9.5]{BCDHKVZ_poset}. See also Example~\ref{Exa:BinaryFreeAmalg}.
\end{exa}
\begin{exa}
	\label{exa:poset}
	The \emph{generic partial order} $\bK$ is the \fr limit of the class $\cK$ of all finite partial orders. Notice that the tree of types
	of the generic partial order is ternary, since for every pair of vertices $u,v$ we have either $u<v$, $u>v$ or $u\perp v$.
	We associate $0,1,2$ with letters $\mathrm L,\mathrm X,\mathrm R$ respectively and imagine the partial order pictured in a way
	so all inequalities go from left to right:  $L$ denotes ``left'', $X$ is used for uncomparable and $R$ denotes ``right''.
	We will use the natural order $L<X<R$.

	Finiteness of big Ramsey degrees of the $\bK$ was shown by Hubi\v cka~\cite{Hubicka2020CS} by an application of the Carlson--Simpson theorem; recurrent big Ramsey structures for $\bK$ were characterised by Balko, Chodounsk{\' y}, Dobrinen, Hubi{\v c}ka, Kone{\v c}n{\' y}, Vena and Zucker~\cite{BCDHKVZ_poset} using a refinement of the Calrlson-Simpson theorem which motivated the introduction of parameter $\sigma$ in this survey.
	We follow the presentation of \cite{BCDHKVZ_poset} and
	define the following partial order on $3^{{<}\omega}=\{\mathrm L,\mathrm X,\mathrm R\}^{{<}\omega}$.

	\begin{defin}
	For $x,y\in 3^{{<}\omega}$, we set $x\prec y$ if and only if there exists $i$ such that:
	\begin{enumerate}
		\item $0\leq i<\min(\dom(x),\dom(y))$,
		\item $\la x(i)y(i)\ra=\la\mathrm L\mathrm R\ra$, and
		\item $x(j)< y(j)$ for every $0\leq j< i$.
	\end{enumerate}
	\end{defin}
	It is easy to verify (see \cite{Hubicka2020CS}) that $\bM=\la 3^{{<}\omega},{\prec}\ra$ is a partial oder and $\la 3^{{<}\omega},{\lex}\ra $ is its linear extension.
	Moreover $\bM$ is bi-embeddable with $\bK$.  To see this,  assume that the vertex set of $\bK$ is $\omega$ and assign to every vertex $i\in \omega$ a sequence $\varphi(i)\in 3^i$ where $\varphi(i)(j)=\mathrm L$ if and $i<^\bK j$, $\varphi(i)(j)=\mathrm R$ if and $i>^\bK j$ and $\varphi(i)(j)=\mathrm X$ otherwise.

	We put $$\sigma=\{\la LR \ra\}\cup \{\la ab \ra : a,b\in \{\mathrm L,\mathrm X,\mathrm R\}\hbox{ and } a>b\}.$$ 
	Notice that, by the choice of $\sigma$, any embedding $\bT^\sigma_{3,m}\to \bT^\sigma_{3,m}$ (for any $m$) is also an embedding $\bM\to \bM$.
	By Theorems~\ref{Thm:CS} and~\ref{Thm:CS_Envelopes} we obtain that big Ramsey degrees of $\bK$ are finite. For the precise characterisation of recurrent big Ramsey structures for $\bK$, see \cite[Definition 1.5]{BCDHKVZ_poset}. We remark that shape-preserving functions used in \cite{BCDHKVZ_poset} are coarser than embeddings of $\bT^\sigma_{3,m}\to \bT^\sigma_{3,m}$.
\end{exa}
	Balko, Chodounsk\'y, Hubi\v cka, Kone\v cn\' y, Ne\v set\v ril, and Vena~\cite{balko2021big} give simple, yet quite general structural condition on when methods based on the Carlson--Simpson theorem can be used to obtain finiteness of BRDs.

	Given structures $\bA$ and $\bB$, a \emph{homomorphism} from $\bA$ to $\bB$ is a map $\phi\colon A\to B$ so that images of related tuples remain related. As in~\cite{hubivcka2019all} we call a homomorphism $\varphi\colon \bA \to \bB$ a
	\emph{homomorphism-embedding} if $\varphi$ restricted to any irreducible substructure of $\bA$ is
	an embedding. The homomorphism-embedding $\varphi$ is called a \emph{strong completion}
	of $\bA$ to $\bB$ provided that $\bB$ is irreducible and $\varphi$ is injective.

	\begin{theorem}
 \label{Thm:Cycles}
	Let $\bK$ be a countably-infinite irreducible structure with $\cL_K$ finite and containing only unary and binary relation symbols. Assume that
	every countable structure $\bA$ has a strong completion to $\bK$
	provided that every induced cycle in $\bA$ (seen as a substructure) has a strong completion to $\bK$ and every irreducible substructure
	of $\bA$ of size at most 2 embeds into $\bK$. Then $\bA$ has finite big Ramsey degrees.
	\end{theorem}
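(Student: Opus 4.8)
The plan is to fit $\bK$ into the Carlson--Simpson framework of Section~\ref{Sec:Parameter_Space} via a weak bi-embedding and then apply Theorem~\ref{Thm:Weak_Biembeddability}, in the guise of Corollary~\ref{Cor:Bi_embeddability_Ramsey}. First I would fix the combinatorial data. As $\cL_\bK$ is finite and binary, there are only finitely many quantifier-free $2$-types; let $k$ be (essentially) their number, encoded so that passing numbers in $k^{<\omega}$ correspond to the $2$-types realized in $\bK$ and unary types are recorded on an initial segment of levels. Let $\sigma\subseteq k^{<\omega}$ consist of the strings of length at most $3$ encoding the irreducible $\cL_\bK$-structures on at most $2$ or $3$ vertices which do not embed into $\bK$, and pick $m$ large enough to carry the unary information. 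Set $\bN=\bT^\sigma_{k,m}$ and let $\bN^{*}=(\bT^\sigma_{k,m})^{*\cC}$ be the envelope expansion of Theorem~\ref{Thm:CS_Envelopes}, so that $\bN^{*}$ satisfies IRT; moreover $\bN^{*}/\bN$ is finitary with an absolute bound, by the reasoning of Theorem~\ref{Thm:Milliken_Envelopes} (the relations in $\sigma$ have arity at most $3$, so the relative heights needed to determine a $\cC$-shape are witnessed by boundedly many nodes). Finally let $\bM^{*}$ be the embedding-faithful expansion of $\bN^{*}$ whose language contains $\cL_\bK$ and in which the $\cL_\bK$-relations are read off passing numbers and the relations $R_{\vec{s}}$ on tuples of pairwise distinct heights, with no $\cL_\bK$-relation holding on equal-height tuples --- exactly as the graph $\bM$ is built from $\bT_2^{\sigma*}$ in the triangle-free Henson example. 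Since these new relations are quantifier-free definable and $\emb(\bT^\sigma_{k,m})$-invariant, $\bM^{*}$ still satisfies IRT and $\bM^{*}/\bN$ is still finitary; write $\bM=\bM^{*}|_{\cL_\bK}$, a countable $\cL_\bK$-structure on $k^{<\omega}$.

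Next I would produce a weak bi-embedding $(\bM^{*},\phi,\psi)$ for $\bK$. For $\phi\in\emb(\bK,\bM)$: enumerate $K=\{a_i:i<\omega\}$ and send $a_i$ to the string recording at coordinate $j<i$ the $2$-type of $(a_j,a_i)$, together with the unary type of $a_i$ on the first $m$ coordinates; its image is an antichain of $k^{<\omega}$, it is an embedding because $\bM$'s relations are read off passing numbers, and it lands inside $\bM$ because $\bK$, being irreducible, realizes none of the forbidden $\leq 3$-vertex configurations encoded by $\sigma$. For $\psi\in\emb_\bK(\bM,\bK)$: this is where the hypothesis of the theorem enters. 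By a one-vertex-at-a-time construction --- maintaining that the finite part built so far is an injective homomorphism-embedding of the corresponding finite substructure of $\bM$ into $\bK$, and using homogeneity of $\bK$ (one may assume $\bK$ is the \fr limit of its age) to place the next vertex --- it suffices to prove that $\bM$ has a strong completion to $\bK$. By the hypothesis of the theorem this follows once we verify that every irreducible $2$-vertex substructure of $\bM$ embeds into $\bK$ --- immediate, since passing numbers range only over $2$-types realized in $\bK$ --- and that every induced cycle occurring in $\bM$ has a strong completion to $\bK$.

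This last point is the heart of the matter and the step I expect to be the main obstacle. One must analyze how an induced cycle $v_0,\dots,v_{n-1}$ can sit inside the tree-coded structure $\bM$: using the meet structure of $k^{<\omega}$, the fact that the relations $R_{\vec{s}}$ propagate passing numbers downward along branches, and the minimality built into the choice of $\sigma$, one shows that the sequence of $2$-types read along such a cycle is constrained exactly so that the cycle, regarded abstractly, is isomorphic to a substructure of $\bK$ --- equivalently, is already its own strong completion --- or else that no such induced cycle occurs in $\bM$ at all. Informally, $\sigma$ is engineered so that $\bM$ carries no ``accidental'' obstruction beyond cycles, and the cycles it does carry are exactly those the hypothesis of the theorem promises to resolve; turning this into a proof is a combinatorial case analysis over branch configurations in $k^{<\omega}$.

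Once these verifications are in place, $(\bM^{*},\phi,\psi)$ is a weak bi-embedding for $\bK$; since $\bM^{*}$ satisfies IRT we have $\age(\bM^{*}{\cdot}\phi)\subseteq\age(\bM^{*})=\rm{BRO}(\bM^{*})$, and since $\bM^{*}/\bN$ is finitary, $(\bM^{*}{\cdot}\phi)/\bK$ is precompact, so $(\bM^{*}{\cdot}\phi)(\bA,\bK)$ is finite for every $\bA\in\age(\bK)$. Corollary~\ref{Cor:Bi_embeddability_Ramsey} then yields $\rm{BRD}(\bA,\bK)\leq|(\bM^{*}{\cdot}\phi)(\bA,\bK)|<\infty$ for all $\bA\in\age(\bK)$, i.e.\ $\bK$ has finite big Ramsey degrees (the ``$\bA$'' in the statement being a slip for ``$\bK$''). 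In fact, by Fact~\ref{Fact:Exp_facts} one may first replace $\phi$ by $\phi\circ\eta$ for a suitable $\eta\in\emb(\bK)$ so that $(\bM^{*}{\cdot}(\phi\circ\eta))/\bK$ is moreover $I$-unavoidable for a finite $I$, and then Theorem~\ref{Thm:Implies_Finite_Recurrent_BRS} upgrades the conclusion to the existence of a finitary, recurrent big Ramsey structure for $\bK$, although the stated theorem asks only for finiteness.
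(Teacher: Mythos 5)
The paper does not actually prove Theorem~\ref{Thm:Cycles}; it states it as following ``along the lines of'' the structural condition of \cite{balko2021big}, so there is no in-paper argument to compare against. Your overall strategy --- encode $\bK$ into a parameter tree $\bT^\sigma_{k,m}$, pass to the envelope expansion to get IRT, build a weak bi-embedding $(\bM^*,\phi,\psi)$, and invoke Corollary~\ref{Cor:Bi_embeddability_Ramsey} (with Theorem~\ref{Thm:Implies_Finite_Recurrent_BRS} as an upgrade) --- is indeed the intended template, and your construction of $\phi$ by recording $2$-types as passing numbers is the standard one, exactly as in the triangle-free Henson example. The observation that irreducibility of $\bK$ makes every member of $\age(\bK)$ irreducible, so that a strong completion of $\bM$ to $\bK$ is automatically a $\bK$-approximate embedding, is also correct and worth making explicit.

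However, there is a genuine gap at precisely the step you flag as ``the heart of the matter'': you never specify $\sigma$ concretely, and you never verify that every induced cycle of $\bM=\bM^*|_{\cL_\bK}$ has a strong completion to $\bK$. This is not a routine verification that can be deferred --- it is where essentially all of the combinatorial content of the theorem lives. The relations $R_{\vec s}$ record only \emph{pairwise-common-level} passing-number patterns (``parallel'' passing numbers below the meets), and one must show that forbidding the right finite set of such patterns suffices to kill every non-completable induced cycle of every length, not merely the $3$-vertex obstructions your description of $\sigma$ suggests. In particular, your phrasing ``strings of length at most $3$ encoding the irreducible structures on at most $2$ or $3$ vertices which do not embed into $\bK$'' does not obviously control long induced cycles in $\bM$, and the claim that ``$\sigma$ is engineered so that $\bM$ carries no accidental obstruction beyond cycles'' is exactly the statement to be proved, asserted rather than established. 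A secondary, smaller issue: you assert that $\bM^*/\bN$ is finitary ``with an absolute bound'' by analogy with Theorem~\ref{Thm:Milliken_Envelopes}; for the Carlson--Simpson envelopes one still needs to check that shapes are determined by boundedly many nodes in the presence of the $R_{\vec s}$ relations (the paper cites \cite{Hubicka2020CS} for the envelope verification in Theorem~\ref{Thm:CS_Envelopes}), so this too is borrowed rather than proved. As it stands the proposal is a correct outline of the known proof architecture with its central lemma left open.
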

	This result goes along the lines of the structural condition given for small Ramsey degrees in~\cite{balko2021big} and implies finiteness of BRDs for \fr limits of metric spaces with distances in $\{0,1,\ldots \delta\}$ for
	every finite diameter $\delta\in \omega$. More generally, Theorem~\ref{Thm:Cycles} can also be applied to the metric
	spaces associated to metrically homogeneous graphs of finite diameter
	with no Henson constraints as defined in Cherlin's catalogue~\cite{Cherlin2013}.
	The precise characterisations of the big Ramsey structures still needs to be given, but we believe
	that they can be obtained analogously as in~\cite{BCDHKVZ_poset}.
\begin{rem}
	While applications of Carlson--Simpson theorem are limited to triangle
	constrained structures, generalization of this method introducing a new
	Ramsey-type theorem on trees is given in~\cite{Balko2023Sucessor}.
	This result can be used to obtain all BRD upper bounds discussed in this survey as well as new ones, see announcement~\cite{typeamalg}.
\end{rem}

\section{Expansions by unary relations and functions}
\label{Sec:Functions}
In this section we consider structures in a language with both relation and unary function symbols.
We present a straightforward technique, based on~\cite{hubivcka2019all}, for extending structures with known big Ramsey bounds by incorporating additional unary relations and functions. While initially motivated by the study of 2-orientable graphs \cite{evans2019automorphism}, the method is described here in a more general form.

All examples of structures with finite big Ramsey degrees discussed so far use finite languages only.  This is not a coincidence, since there are persistent colorings showing, for example, that a random graph with $\omega$-many types of edges does not have finite BRDs \cite{HTZ_omega_edge}. Quite surprisingly however, structures with infinitely many unary relations may have bounded big Ramsey degrees as shown
in~\cite{BCdRHKK}. The construction of this section indicates, among other things, how to transfer known big Ramsey results to expansions with countably many new unaries.

\textbf{In this section, we fix} a language $\bbL'$ extending $\bbL$ by unary function symbols, as many as will ever appear in this section. We note that in this work, similar to \cite{hubivcka2019all}, we allow partial functions, so an embedding between $\bbL'$-structures $\bA$ and $\bB$ is an injection $g\colon A\to B$ which is an embedding from $\bA|_\bbL$ to $\bB|_\bbL$, and for every function symbol $F\in \bbL'$ and $a\in A$, we have $a\in \dom(F^\bA)\Leftrightarrow g(a)\in \dom(F^\bB)$, and when this happens, we have $g(F^\bA(a)) = F^\bB(g(a))$. Similar to our previous conventions, in this section, a \emph{structure} means an $\bbL'$-structure, and given a structure $\bK$, we set $\cL_\bK = \cL_{\bK|_\bbL}\cup \{F\in \bbL'\setminus \bbL: \dom(F^\bK)\neq \emptyset\}$. We remark that partial unary functions can be used to encode unary predicates by adding a unary function symbol $F$ whose domain is the desired unary predicate, setting $F(a) = a$ for each $a\in \dom(F)$.

Given a structure $\bA$, now that there are function symbols, it is not necessarily true that there exists a substructure with any given domain $B\subseteq A$. Given a set $S$ and a structure $\bA$, we say that $\bA$ is \emph{$S$-generated} if $S\subseteq A$ and there is no proper substructure of $\bA$ containing $S$.
We call $\bA$ \emph{locally finite} if every finite $S\subseteq A$ generates a finite substructure. Substructures generated by singletons, especially the singleton $\{0\} = 1$, will play an important role. Given $v\in \bA$, we denote by $\mathrm{Cl}_\bA(v)$ the substructure generated by $\{v\}$. Given a  $1$-generated structure $\bB$, we say that $\bB$ \emph{describes} $(\mathrm{Cl}_\bA(v), v)$, or just that $\bB$ \emph{describes} $\mathrm{Cl}_\bA(v)$ when $v$ is understood, iff there exists an embedding $e:\bB\to\bA$ such that $e(0)=v$ (such $e$, if it exists, is unique). 

\begin{defin}
	\label{Def:Unary}
	\begin{enumerate}
		\item A set $\cC$ of finite $1$-generated structures is \emph{description-closed} if whenever $\bA\in \cC$ and $v\in \bA$, there is $\bB\in \cC$ that describes $\mathrm{Cl}_\bA(v)$. We say $\cC$ is \emph{irredundant} if whenever $\bA,\bA'\in \cC$ and $f:\bA\to\bA'$ is an isomorphism satisfying $f(0)=0$, then $\bA=\bA'$. A \emph{set of allowed vertex closures} is a countable, description-closed, irredundant set of $1$-generated structures. 
  
	\item Given a structure $\bB$ and a relation symbol $R\in \bbL$ of arity $1\leq k< \omega$, we say that $\bB$ is \emph{$R$-generated} if $\bB$ is $k$-generated and $(0,...,k-1)\in R^\bB$. We call an $\bbL'$-structure \emph{simple} if it is $R$-generated for some $R\in \bbL$. 
  
	\item Let $\bK$ be an $\bbL$-structure, $\cC$ a set of allowed vertex closures, and $\cD$ a set of simple substructures. 
	      Denote by $\bK^{\cC,\cD}$ the following $\bbL'$-structure:
	\begin{enumerate}
		\item $K^{\cC,\cD}=\{(\bA,e): \bA\in \cC\hbox{ and $e$ is an embedding } \bA|_\bbL\to \bK\}$. 
		\item For every relational symbol $R\in \bbL$ of arity $k$  we put 
        $$((\bA_0,e_0),(\bA_1,e_1),\ldots, (\bA_{k-1},e_{k-1})) \in R^{\bK^{\cC,\cD}}$$ 
			if and only if there are an $R$-generated $\bB\in \cD$, an embedding $f\colon \bB|_\bbL\to \bK$, and embeddings $g_i\colon \bA_i\to \bB$ with $g_i(0) = i$ and $e_i = f\circ g_i$ for every $i<k$.

			\item For every function symbol $F\in \bbL'$ we put $(\bA_0, e_0)\in \dom(F^{\bK^{\cC, \cD}})$ iff $0\in \dom(F^{\bA_0})$, in which case we have $F^{\bK^{\cC, \cD}}((\bA_0,e_0))=(\bA_1,e_1)$ iff  $\bA_1$ describes $\rm{Cl}_{\bA_0}(F^{\bA_0}(0))$, and letting $e\colon \bA_1\to \bA_0$ be the unique embedding with $e(0) = F^{\bA_0}(0)$, we have $e_0\circ e = e_1$.
	\end{enumerate}
	Given $(\bA,e)\in K^{\cC,\cD}$ we put its \emph{projection} denoted by $\pi(\bA,e)$ to be $e(0)$.

	\item Given $\cC$ a set of allowed vertex-closures, we denote by $\bK^{\cC}$ the $\bbL'$-structure $\bK^{\cC,\cD}$ for $\cD$ being (up to isomorphic members) the inclusion maximal set of allowed simple substructures. 
	\end{enumerate}
\end{defin}

The purpose of $\cC$ and $\cD$ is to construct new structures $\bK^{\cC,\cD}$ from old in a way Ramseyness is transfered, too.
Let us discuss three simple examples showing possibilities of this construction.
\begin{exa}[Two unary functions]
	\label{exa:orientations}
	Let $\bK$ be a countable structure with $\cL_\bK = \emptyset$.  Let $\cL' = \{F_0, F_1\}\subseteq \bbL' \setminus \bbL$ (i.e. $F_0$ and $F_1$ are unary function symbols).
	Let $\cC$ be an inclusion maximal irredundant set of finite $1$-generated $\cL'$-structures. 
	Then every countable locally finite $\cL'$-structure has an embedding to $\bK^{\cC}$.
\end{exa}
\begin{exa}[Matching on the top of order of rationals]
	\label{exa:permutation}
	Let $\bK$ be the rational linear order as discussed in Example~\ref{Exa:Rationals}, using the binary relation symbol $\lex\in \bbL$, and let $F\in \bbL'\setminus \bbL$. Let $\cC$ consist of all structures $\bA$ with domain $\{0,1\}$, $\cL_\bA = \{\lex, F\}:= \cL'$, with $\lex$ a linear order, and with $F$ swapping $0$ and $1$. These are the following two structures:
	\begin{enumerate}
		\item $\bA_0$ with $0\prec_{lex}1$, $F^{\bA_0}(0)=1$, $F^{\bA_0}(1)=0$.
		\item $\bA_1$ with $1 \prec_{lex}0$, $F^{\bA_1}(0)=1$, $F^{\bA_1}(1)=0$.
	\end{enumerate}

	We can identify $K^{\cC}$ with $K^2\setminus \Delta_K$. Given $(a, b)$ and $(c, d)$ in $K^{\cC}$, we have $(a, b)\lex^{\bK^\cC} (c, d)$ iff $a\lex^\bK c$, and we have $F^{\bK^\cC}((a, b)) = (b, a)$. Note that $\lex^{\bK^\cC}$ is only a  partial order, since for instance $(a, b)\not\lex^{\bK^\cC} (a, c)$ and vice versa.
\end{exa}
\begin{exa}[Bipartite graph from Rado graph]
	\label{exa:rado}
	Consider the Rado graph $\bK$ discussed in Example~\ref{Exa:Rado}. Let $\cL' = \{R_1, F\}\subseteq \bbL'$, where $F\in \bbL'\setminus \bbL$. For this example, we will treat $F$ as a unary predicate, so we will only specify the domain of $F$. Let $\cC$ consist of the following two $\cL'$-structures:
	\begin{enumerate}
		\item $\bA$ with $A=\{0\}$, $R_1^\bA=\emptyset$, $\dom(F^\bA)=\emptyset$.
		\item $\bA'$ with $A'=\{0\}$, $R_1^{\bA'}=\emptyset$, $\dom(F^{\bA'})=\{0\}$.
	\end{enumerate}
	Additionaly let $\cD$ consist of two $\cL'$-structures:
	\begin{enumerate}
		\item $\bE$ with $E=\{0,1\}$, $R_1^\bE=\{(0,1),(1,0)\}$, $\dom(F^{\bA'})=\{0\}$.
		\item $\bE'$ with $E'=\{0,1\}$, $R_1^{\bE'}=\{(0,1),(1,0)\}$, $\dom(F^{\bA'})=\{1\}$.
	\end{enumerate}
	The structure $\bK^{\cC,\cD}$ is a bipartite graph created form the Rado graph $\bK$ by duplicating every vertex into two (where precisely one of the two copies is in the domain of $F$) and erasing all edges connecting vertices that are either both in $\dom(F)$ or both not in $\dom(F)$. Thus we obtain an universal bipartite graph.

	Notice that exactly the same construction can be used for $k$-partite graphs for every finite $k$
	as well as for $\omega$-partite graph.
	If $k$ is finite, we can consider either the graph with or without parts distinguished by unary relational
	symbols. For $\omega$-partite graph the parts needs to be determined by means of $\omega$ many unary relations
	(otherwise the graph would be isomorphic to the Rado graph).
\end{exa}
\begin{exa}[$\bS(2)$ from labelled order of rationals]
	\label{exa:S2}
\begin{figure}[h]
\setlength{\unitlength}{1mm}
\begin{picture}(20,30)(0,0)

\put(10,15){\circle*{1}}
\put(10,25){\circle*{1}}
\put(0,5){\circle*{1}}
\put(20,5){\circle*{1}}
\put(1,5){\vector(1,0){18}}
\put(9,14){\vector(-1,-1){8}}
\put(19,6){\vector(-1,1){8}}
\put(10,24){\vector(0,-1){8}}
\put(9,24){\vector(-1,-2){9}}
\put(11,24){\vector(1,-2){9}}

\end{picture}
	\label{fig:dD}
\caption{The tournament $\bD$}
\end{figure}
	The tournament $\bS(2)$ is defined as follows: let $C$ denote the unit circle in the
	complex plane. Define an oriented graph structure on $C$ by declaring that there is
	an directed edge from $x$ to $y$ iff $0 < \mathrm{arg}(y/x)<\pi$. Call $\bC$ the resulting oriented graph. The
	dense local order is then the substructure (oriented subgraph) $\bS(2)$ of $\bC$ whose vertices are those points
	of $\bC$ with rational argument. 
	Equivalently, $\bS(2)$ is a \fr limit of the class of all $\bD$-free tournaments where
	$\bD$ is a tournament depicted in Figure~\ref{fig:dD}.

	To describe the big Ramsey degrees of $\bS(2)$, Laflamme, Nguyen Van
	Th\'e, and Sauer~\cite{L_NVT_S_2010} employed two main approaches: they
	represented $\bS(2)$ using a free superposition of the order of
	rationals and a structure in a unary language~\cite{L_NVT_S_2010}, and
	they developed their own variant of the Milliken tree theorem where
	each level is colored by finitely many colours.  We give an alternate
	approach using Definition~\ref{Def:Unary} based on the same
	representation.

	Notice that fixing an arbitrary vertex $v\in \bS(2)$ divides the remaining vertices
	into two parts. The first part consists of all vertices $u$ such that $(u,v)$
	is an edge, while the second part consists of all vertices $u'$ such that $(v,u')$
	forms an edge.  Each part is isomorphic to the order of rationals. This
	motivates the following construction.
	
	Let $\bK$ be the rational linear order (as discussed in Example~\ref{Exa:Rationals}) where the order is represented using the binary relation symbol $\lex\in \bbL$.
	We will proceed in analogy to Example~\ref{exa:rado} and duplicate each vertex of $\bK$ into two distinguished copies.
	Let $\cL' = \{\lex, F\}\subseteq \bbL'$, where $F\in \bbL'\setminus \bbL$ is a unary function symbol. Let $\mathcal C$ consist of the following two structures:
	\begin{enumerate}
		\item $\bA$ with $A=\{0\}$, $R_1^\bA=\emptyset$, $\dom(F^\bA)=\emptyset$.
		\item $\bA'$ with $A'=\{0\}$, $R_1^{\bA'}=\emptyset$, $\dom(F^{\bA'})=\{0\}$, $F^{\bA'}(0)=0$.
	\end{enumerate}
	Now let $\mathcal L=\{R_1\}\subseteq \bbL$ be a language consisting of a single binary relation.
	Let $\bS$ be a structure with the same vertex set as $\bK^\cC$ where we put $(a,b)\in R_1^\bS$ if
	\begin{enumerate}
		\item $a\lex^{\bK^\cC} b$ and vertex closures of $a$ and $b$ in $\bK^\cC$ are isomorphic.
		\item $b\lex^{\bK^\cC} a$ and vertex closures of $a$ and $b$ in $\bK^\cC$ are not isomorphic.
		\item Neither $a\lex^{\bK^\cC} b$ or $b\lex^{\bK^\cC} a$ holds, $a\notin \dom F^{\bK^\cC}$ and $b\in \dom F^{\bK^\cC}$.
	\end{enumerate}
	As discussed in~\cite{L_NVT_S_2010}, we have $\bS\cong \bS(2)$, and $\bS$ is interpreted in $\bK^\cC$.
	Consequently finite big Ramsey degrees of $\bK^\cC$ also implies finite big Ramsey degrees of $\bS(2)$.

	We remark that this construction can be generalized to the structures $\bS(k)$ for any finite $k$, disucssed in \cite{DB,CDP_SDAP2},
	as well the order of rationals partitioned into $n$ dense or convex parts, for a given finite $n$ \cite{CDP_SDAP2}.
\end{exa}
\begin{exa}[Ultrametric spaces]
	\label{exa:ultrametric}
	Given $n\in \bbN$, an \emph{$n$-ultrametric space} is a metric space $\bA=(A,d)$ such that every $u,v,w\in A$ satisfies $d(u,w)\in n$ and $d(u,w)\leq \max \{d(u,v),d(v,w)\}$. 
	It is well known that the class of finite $n$-ultrametric spaces forms a \fr class, and we write $\bU_n$ for the \fr limit.

	Notice that for every $n$-ultrametric space $\bA$ and every distance $\ell<n$ the relation $\sim^\ell$ defined by putting $u\sim_\ell v$
	if and only if $d(u,v)\leq \ell$ is an equivalence relation. Every
	equivalence class of $\sim_\ell$ is called the \emph{ball of diameter
	$\ell$}.

	Put $\cL=\emptyset$ and let $\cL'$ contain a single unary function $F_1$. 
	For every $n$-ultrametric space $\bA$ we define an $\cL'$-structure $U(\bA)$ constructed as follows:
	\begin{enumerate}
		\item The vertex set is the set of all balls of $\bA$ (of every diameter $\ell<n$).
			Notice that the vertices of $\bA$ are balls of diameter 0.
		\item Given a ball $b$ of diameter $\ell<n-1$ we define $F_1(b)=c$ where $c$ is the unique ball of diameter $\ell+1$ containing $b$.
	\end{enumerate}

	Notice that:
	\begin{enumerate}
		\item $U$ is injective: it is possible to uniquely reconstruct the $n$-ultrametric space $\bA$ from $U(\bA)$.
		\item If $\bA$ and $\bB$ are $n$-ultrametric spacess and $\bA$ is a subspace of $\bB$, then there is an unique embedding $U(\bA)\to U(\bB)$ which is identity on balls of diameter 0. More generally, for every $e\in \emb(\bA,\bB)$, there is a corresponding $e'\in \emb (U(\bA),U(\bB))$.
		\item The closure of every ball $b\in U(\bA)$ of every diameter $\ell\in n$ is a sequence $b=b_\ell,b_\ell+1,\ldots, n_n$ of balls of increasing diameter connected by the function $F_1$. 
	\end{enumerate}
	The structure $\bK^{\cC}$ consequently defines an $n$-ultrametric space $\bU'$ bi-embeddable with $\bU_n$. This concludes Example~\ref{exa:ultrametric}.
\end{exa}

We now show that all these examples have finite big Ramsey degrees.

\begin{lemma}
    \label{Lem:Finite_KCD}
    Let $\bB$ is a finite $\bbL$-structure with $\cL_\bB$ finite. Let $\cC$ be a set of allowed vertex closures, $\cD$  a set of simple structures, and $\bA$  a finite structure. Then $\emb(\bA, \bB^{\cC, \cD})$ is finite.
\end{lemma}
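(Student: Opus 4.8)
The plan is to exhibit a single \emph{finite} subset $Z\subseteq B^{\cC,\cD}$ that contains the image of every embedding $\bA\to\bB^{\cC,\cD}$. Since $\bA$ is finite this suffices: $\emb(\bA,\bB^{\cC,\cD})\subseteq Z^{A}$, which is finite. So fix $f\in\emb(\bA,\bB^{\cC,\cD})$ and $a\in A$, and write $f(a)=(\bA_a,e_a)$ with $\bA_a\in\cC$ and $e_a\colon\bA_a|_\bbL\to\bB$ an embedding; the task is to bound the possible pairs $(\bA_a,e_a)$ independently of $f$ and $a$.

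First I would record the routine fact that an embedding $h\colon\bC\to\bD$ of $\bbL'$-structures carries the substructure of $\bC$ generated by a set $S$ onto the substructure of $\bD$ generated by $h[S]$; this uses the domain-preservation clause in the definition of embedding together with the observation that ``generated by $S$'' means ``obtained from $S$ by finitely many applications of the partial unary functions.'' Applying this to $f$ with $S=\{a\}$ gives $f[\mathrm{Cl}_\bA(a)]=\mathrm{Cl}_{\bB^{\cC,\cD}}(f(a))$, so in particular $|\mathrm{Cl}_{\bB^{\cC,\cD}}((\bA_a,e_a))|=|\mathrm{Cl}_\bA(a)|\le|A|$.

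The key point is to compute $\mathrm{Cl}_{\bB^{\cC,\cD}}((\bA_0,e_0))$ for an arbitrary $(\bA_0,e_0)\in B^{\cC,\cD}$ and show it has exactly $|\bA_0|$ elements. Unwinding the recursive clause of Definition~\ref{Def:Unary}(3) for the unary functions of $\bK^{\cC,\cD}$, applying a composition $F_{i_k}\circdots F_{i_1}$ to $(\bA_0,e_0)$, when defined, produces a pair whose first coordinate describes $\mathrm{Cl}_{\bA_0}(w)$ and whose projection is $e_0(w)$, where $w=F^{\bA_0}_{i_k}\circdots F^{\bA_0}_{i_1}(0)\in A_0$; here one uses description-closure and irredundancy of $\cC$, together with the fact that an embedding out of a $1$-generated structure is determined by the image of the generator, to see that these pairs are well defined and lie in $\cC$. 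Since $\bA_0\in\cC$ is $1$-generated, \emph{every} $w\in A_0$ is of this form, so $\mathrm{Cl}_{\bB^{\cC,\cD}}((\bA_0,e_0))$ is exactly the set of such pairs; and the projection $\pi$ is injective on it, because two such pairs with equal projection have the same value $e_0(w)$, forcing $w=w'$ as $e_0$ is injective. Hence $|\mathrm{Cl}_{\bB^{\cC,\cD}}((\bA_0,e_0))|=|A_0|$, and combining with the previous paragraph, $|\bA_a|\le|A|$ for every $a$.

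Finally I would argue that only finitely many $(\bA_a,e_a)$ can occur. The bijection above also shows the function-reduct of $\bA_a$ is isomorphic to the function-reduct of $\mathrm{Cl}_\bA(a)$ (the bijection preserves the unary functions, though because of the constraint imposed by $\cD$ it may forget relations), so it is one of the finitely many (at most $|A|$) function-reducts of substructures $\mathrm{Cl}_\bA(a')$, $a'\in A$. Since $e_a$ embeds $\bA_a|_\bbL$ into $\bB$ and $\cL_\bB$ is finite, every relation symbol with a nonempty interpretation in $\bA_a$ lies in $\cL_\bB$; as $|\bA_a|\le|A|$ there are then only finitely many possibilities for $\bA_a|_\bbL$ up to isomorphism as well. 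Therefore $\bA_a$ ranges over finitely many isomorphism types, and irredundancy of $\cC$ upgrades this to a genuinely finite set of structures. For each such $\bA_0$ there are only finitely many embeddings $e_0\colon\bA_0|_\bbL\to\bB$ because $\bB$ is finite, so the set $Z$ of all admissible pairs $(\bA_0,e_0)$ is finite, completing the proof. I expect the main obstacle to be the bookkeeping in the third paragraph: threading the partial-function conventions and the description-closure and irredundancy properties of $\cC$ through the recursion of Definition~\ref{Def:Unary}(3) in order to confirm that $\mathrm{Cl}_{\bB^{\cC,\cD}}((\bA_0,e_0))$ is in projection-bijection with $A_0$ and shares its function structure.
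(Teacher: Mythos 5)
Your proposal is correct and follows essentially the same route as the paper's (much terser) proof: for each $a\in A$ the first coordinate of $f(a)$ has functional part isomorphic to $\mathrm{Cl}_\bA(a)$ and relational part embedding into the finite $\bB$, leaving only finitely many possibilities for $f(a)$. Your paragraphs on computing $\mathrm{Cl}_{\bB^{\cC,\cD}}((\bA_0,e_0))$ and invoking irredundancy just supply details the paper leaves implicit.
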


\begin{proof}
    We assume $\emb(\bA, \bB^{\cC, \cD}) \neq \emptyset$. Given $v\in A$ and $f\in \emb(\bA, \bB^{\cC, \cD})$, consider $f(v):= (\cC, e)$. The functional part of $\cC$ is isomorphic to $\rm{Cl}_\bA(v)$, and the relational part of $\cC$ is isomorphic to a substructure of $\bB$, possibly with some relations removed. Hence there are only finitely many possibilities for $f(v)$.  
\end{proof}

\begin{prop}
    \label{Prop:Unary}
	Let $\bK$ be a $\bbL$-structure, $\cC$ a set of allowed vertex-closures and $\cD$ a set of simple substructures.
			If $\bA\in \age(\bK^{\cC,\cD})$ and $\cA$ is a finite set of finite substructures of $\bK$ such that for every $e\in \emb(\bA,\bK^{\cC,\cD})$
			there exists $\bB\in \mathcal A$ isomorphic to $\bK|_{\pi[e[A]]}$,
			then 
	$$\rm{BRD}(\bA, \bK^{\cC, \cD})\leq \sum_{\bB\in \cA} \rm{BRD}(\bB, \bK)\cdot\left|\left\{f\in \emb(\bA,\bB^{\cC,\cD}):\pi[f[A]]=B\right\}\right|.$$
\end{prop}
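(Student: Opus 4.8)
The plan is to reduce this to an application of Theorem~\ref{Thm:Weak_Biembeddability} (or more directly to the coloring argument underlying it), using the projection map $\pi$ to transport colorings of $\emb(\bA, \bK^{\cC,\cD})$ down to colorings of embeddings of finite substructures of $\bK$. The key point is that, unlike the pure relational setting, an embedding $e\in \emb(\bA, \bK^{\cC,\cD})$ is not determined by $\pi\circ e$: the same ``footprint'' $B = \pi[e[A]]\subseteq K$ can be hit by several different embeddings $f\in \emb(\bA, \bB^{\cC,\cD})$ with $\pi[f[A]] = B$ (these differ by which vertex-closures and which simple witnesses are used). This is exactly why the bound involves a sum over $\bB\in\cA$ with each term multiplied by the number of such $f$.

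First I would set up notation: enumerate, for each $\bB\in\cA$, the finite set $\Phi_\bB := \{f\in\emb(\bA,\bB^{\cC,\cD}) : \pi[f[A]] = B\}$ (finiteness of each $\Phi_\bB$ follows from Lemma~\ref{Lem:Finite_KCD}), and let $N = \sum_{\bB\in\cA}\rm{BRD}(\bB,\bK)\cdot|\Phi_\bB|$. Fix $r<\omega$ and a coloring $\gamma\colon\emb(\bA,\bK^{\cC,\cD})\to r$. For a fixed $\bB\in\cA$ and a fixed $f\in\Phi_\bB$, note that every $g\in\emb(\bB,\bK)$ canonically induces an embedding $g\bullet f\in\emb(\bA,\bK^{\cC,\cD})$: since $\bB^{\cC,\cD}$ naturally embeds into $\bK^{\cC,\cD}$ over any copy of $\bB$ in $\bK$ (the vertex-closure data and the simple-witness data are preserved), we can push $f$ forward along $g$. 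This gives a coloring $\gamma_{\bB,f}\colon\emb(\bB,\bK)\to r$, $\gamma_{\bB,f}(g) = \gamma(g\bullet f)$. I would then run the standard ``nested application'' argument as in the proof of Theorem~\ref{Thm:Weak_Biembeddability}: iterate over all pairs $(\bB,f)$ with $\bB\in\cA$, $f\in\Phi_\bB$ in some fixed order, successively finding $\eta_{\bB,f}\in\emb(\bK)$ reducing the number of colors used by $\gamma_{\bB,f}$ (precomposed with the accumulated $\emb(\bK)$-map so far) to at most $\rm{BRD}(\bB,\bK)$, and composing. Write $\xi$ for the final composite, an element of $\emb(\bK)$. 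The hypothesis on $\cA$ guarantees that for \emph{every} $e\in\emb(\bA,\bK^{\cC,\cD})$, the structure $\bK|_{\pi[e[A]]}$ is isomorphic to some $\bB\in\cA$, so after fixing such an isomorphism, $e$ factors as $g\bullet f$ for some $g\in\emb(\bB,\bK)$ and some $f\in\Phi_\bB$; hence $\xi$ (suitably lifted to act on $\bK^{\cC,\cD}$ via the projection, i.e.\ replacing each $\pi$-coordinate by its $\xi$-image while keeping the closure and witness data) sends $\emb(\bA,\bK^{\cC,\cD})$ into a set on which $\gamma$ takes at most $N$ values. This yields $\rm{BRD}(\bA,\bK^{\cC,\cD})\le N$.

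The main obstacle I anticipate is making precise the ``lift'' of an $\emb(\bK)$-map $\xi$ to an $\emb(\bK^{\cC,\cD})$-map, and checking that (a) it genuinely lies in $\emb(\bK^{\cC,\cD})$, (b) it is compatible with the factorization $e = g\bullet f$ in the sense that postcomposing $e$ with the lift of $\xi$ equals $(\xi\circ g)\bullet f$, and (c) the bookkeeping in the nested iteration is consistent — i.e.\ that once we have reduced $\gamma_{\bB,f}$ on some tail of $\emb(\bK)$, later applications of further $\eta$'s (which are elements of $\emb(\bK)$, hence act on footprints) do not re-expand the color set for the pair $(\bB,f)$. This is the same phenomenon handled in the proof of Theorem~\ref{Thm:Weak_Biembeddability}: the key is that each $\gamma_{\bB,f}$ depends on $g$ only through $g\bullet f$, and precomposing $g$ with an element of $\emb(\bK)$ commutes with the $\bullet$ operation, so monochromaticity (or few-color-ness) is preserved under further restriction. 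Everything else — finiteness of $\Phi_\bB$, the observation that $\bB^{\cC,\cD}$ embeds canonically into $\bK^{\cC,\cD}$ over a copy of $\bB$, and the factorization of an arbitrary $e$ — is routine given Lemma~\ref{Lem:Finite_KCD} and Definition~\ref{Def:Unary}.
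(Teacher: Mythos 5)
Your plan is correct and matches the paper's proof in all essentials: the same transport of colorings along $\pi$ via the operation $g\bullet f$ (the paper's $f^e$), the same factorization of an arbitrary $e\in\emb(\bA,\bK^{\cC,\cD})$ through some $\bB\in\cA$, and the same lift of the final $\emb(\bK)$-map to an element of $\emb(\bK^{\cC,\cD})$ by postcomposing the second coordinates. The only cosmetic difference is that the paper fuses all $f\in\Phi_\bB$ into a single coloring $\emb(\bB,\bK)\to r^{\Phi_\bB}$ and applies $\rm{BRD}(\bB,\bK)$ once per $\bB$, whereas you nest over the pairs $(\bB,f)$ one at a time; both yield the stated bound.
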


\begin{proof}
	Fix $\bK$, $\cC$, $\cD$,  $\bA$ and $\cA$ as in the statement.
	Consider a coloring $\gamma\colon \emb(\bA,\allowbreak \bK^{\cC,\cD})\to r$ for some $r<\omega$.  
	For every $\bB\in \cA$ we consider set $$S_\bB=\left\{f\in \emb(\bA,\bB^{\cC,\cD}):\pi[f[A]]=B\right\}  = \left\{f\in \emb(\bA,\bK^{\cC,\cD}):\pi[f[A]]=B\right\}.$$
	For every $e\in \emb(\bB,\bK)$ and $f\in S_\bB$ consider embedding $f^e\in \emb(\bA,\bK^{\cC,\cD})$ constructed as follows.
	Given $v\in A$ let $\bA'$ and $e'$ be such that $f(v)=(\bA',e')$ and put $f^e(v)=(\bA', e\circ e')$.
	Now we define coloring $\gamma_\bB:\emb(\bB,\bK)\to (S_\bB)^r$ by putting $\gamma_\bB(e)(f)=\gamma(f^e)$ for every $e\in \emb(\bB,\bK)$ and $f\in S_\bB$.

	Let $\varphi\in \emb(\bK,\bK)$ be such that the number of colors of every $\bB$ is at most $\rm{BRD}(\bB, \bK)$
	Construct $\varphi'\in \emb(\bK^{\cC,\cD}, \bK^{\cC,\cD})$ by putting $\varphi'((\bA',e'))=(\bA',\varphi\circ e')$.
\end{proof}

As an immediate corollary of Proposition~\ref{Prop:Unary}  we obtain that the structures discussed in Examples~\ref{exa:orientations}, \ref{exa:permutation}, \ref{exa:rado}, \ref{exa:S2} and \ref{exa:ultrametric} all have finite BRDs. With additional analysis it is possible to obtain corresponding big Ramsey structures.
We describe a general construction for the special case of \emph{unary} languages. That is, languages $\cL'\subseteq \bbL'$ where all
relation symbols (as well as all function symbols) are unary. This is the case of Examples~\ref{exa:orientations} and \ref{exa:ultrametric}.
We follow~\cite{Evans3} which describes the Ramsey expansions for free amalgamation classes in languages with functions symbols and adapt it to describe big Ramsey structures for \fr limits of amalgamation classes in unary languages, which can still be described in a relatively easy way.

Given a set of allowed closures $\cC$ and a structure $\bA$, we say that $\bA$ \emph{has closures in} $\cC$ if for every $v\in A$ there exists $\cB\in \cC$ describing $\mathrm{Cl}_\bA(v)$.
Notice that if $\cK$ is an amalgamation class of structures in a language $\cL'$ consisting of only unary relation and function symbols, then $\cK$ is a free amalgamation class and there exists a set of allowed closures $\cC$ defining the class $\cK$, in the sense that an  $\cL'$-structure $\bA$ is in $\cK$ if and only if it has closures in $\cC$.

Given an $\bbL'$-structure $\bK$ and $u,v\in K$ we write $u\preceq^\bK v$ if and only if $u\in \mathrm{Cl}_\bK(v)$.  We also write $u\sim^\bK v$ if $u\preceq^\bK v\preceq^\bK u$. It is easy to see that $\preceq^\bK$ is an preoder and $\sim^\bK$ is an equivalence relation. We call its equivalence classes \emph{closure-components}. 
Given $v\in K$ we denote by $[v]_\bK$ its closure-component.

If, for a structure $\bA$, there is $v\in \bA$ such that $\bA=\mathrm{Cl}_\bA(v)$, then $[v]_\bA$ is the set of all $u\in A$ with $\rm{Cl}_\bA(u) = \bA$, so in particular $[v]_\bA$ doesn't depend on the choice of generating element $v\in A$. We denote by $\bA^\circ\subseteq \bA$ the substructure (possibly empty) on underlying set $A\setminus [v]_\bA$; in particular, $A\setminus [v]_\bA$ is the domain of a substructure. 

We discuss some special orderings of structures. For this we fix a binary relation symbol $<\in \bbL$.  We call a structure $\bA$ \emph{ordered} if $<^\bA$ is a linear order, and we call $\bA$ \emph{unordered} if $<^\bA$ is $\emptyset$. If $\bA$ and $\bB$ are ordered structures, we say that $\bA$ is \emph{similar} to $\bB$ if there exists an isomorphism $f:\bA|_{\bbL'\setminus \{{<}\}}\to\bB|_{\bbL'\setminus \{{<}\}}$ such that $f|_{\bA^\circ}\colon \bA^\circ\to \bB^\circ$ is also an isomorphism (thus $f$ is order-preserving on all vertcies not in $f|_{\bA^\circ}$).
Given a structure in $\bK$ in a language not containing $<$ we call an expansion $\bK^*/\bK$ an \emph{order-expansion} if it only adds a binary relation $<^{\bK^*}$ which is a linear order.

\begin{defin}
Given a countable unordered structure $\bK$ and an order-expansion $\bK^*$ of $\bK$, we call $\bK^*/\bK$ \emph{closure-respecting} if $<^{\bK^*}$ satisfies:
\begin{enumerate}
  \item Every closure-component forms an interval.
  \item Whenever $u\not\sim^\bK v$ and $u\preceq^\bK v$ then also $u<^{\bK^*}v$.
  \item For every $u$ and $v$, if $\mathrm{Cl}_{\bK^*}(u)$ is similar to $\mathrm{Cl}_{\bK^*}(v)$,
	  then they are also isomorphic.
  \item If $K$ is infinite, then $<^{\bK^*}$  has order type $\omega$.
\end{enumerate}
\end{defin}

\begin{obs}
	Let $\bK$ be an unordered locally finite $\bbL'$-structure. Then every downset of $\prec_\bK$ and every closure-component is finite.
	Thus if $\bK$ is countable, then it has a closure-respecting order-expansion $\bK^*$.
\end{obs}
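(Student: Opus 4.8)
The plan is to dispose of the two finiteness claims at once and then put all the effort into building the order. For any $v\in K$, the set $\{u\in K: u\preceq^\bK v\}$ is, by definition of $\preceq^\bK$, exactly the underlying set of $\mathrm{Cl}_\bK(v)$, the substructure generated by the finite set $\{v\}$; local finiteness makes this finite, and $[v]_\bK\subseteq \mathrm{Cl}_\bK(v)$ is finite too. Consequently the quotient poset $P:=K/{\sim^\bK}$ is countable and each of its principal downsets is finite, being the image of some finite $\mathrm{Cl}_\bK(v)$ under the quotient map. A standard greedy/topological-sort argument then yields a linear extension of $P$, i.e.\ an enumeration $C_0,C_1,C_2,\dots$ of the $\sim^\bK$-classes with $C_i<_P C_j\Rightarrow i<j$; when $K$ is infinite this extension has order type $\omega$, since each $C_i$ is finite and hence there are infinitely many of them.

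Next I would let $<^{\bK^*}$ be the linear order on $K$ that places all of $C_i$ before all of $C_j$ whenever $i<j$, leaving the order \emph{inside} each $C_i$ to be specified below; this is a genuine order-expansion of $\bK$ because $\bK$ is unordered. Three of the four conditions then hold automatically: each closure component is an interval (condition (1)); when $K$ is infinite the order type is $\omega$, being a concatenation of infinitely many finite nonempty blocks (condition (4)); and if $u\preceq^\bK v$ with $u\not\sim^\bK v$, then $[u]<_P[v]$, so the block of $u$ precedes that of $v$ and $u<^{\bK^*}v$ (condition (2)). The finite case is identical, with (4) vacuous.

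The real content is condition (3): the internal order of a component must depend only on the ordered structure lying strictly below it, so that matching ``below'' data force matching ``whole'' data. To this end I would fix once and for all a complete set of representatives $(\bB_\tau,{<_{\circ,\tau}})$ for the isomorphism types $\tau$ of pairs consisting of a finite $1$-generated unordered $\bbL'$-structure $\bB$ together with a linear order on $\bB^\circ$ --- where an isomorphism of such pairs is an $\bbL'$-isomorphism restricting to an order-isomorphism of the $\circ$-parts, which is automatic on underlying sets since isomorphisms preserve generator sets and thus carry the $\circ$-part of $\bB$ onto that of $\bB'$ --- and for each $\tau$ fix an arbitrary linear order $<_\tau$ on the generators of $\bB_\tau$. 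Then process $C_0,C_1,\dots$ in turn: upon reaching $C_i$, with generated structure $\bA=\mathrm{Cl}_\bK(v)$ for $v\in C_i$, every component meeting $\bA^\circ$ lies $<_P$-below $[v]$ and has already been handled, so $\bA^\circ$ carries a definite linear order $<_\circ$ inherited from the stages already completed; letting $\tau$ be the type of $(\bA,{<_\circ})$ and choosing an isomorphism $h\colon(\bA,{<_\circ})\to(\bB_\tau,{<_{\circ,\tau}})$, order $C_i=[v]_\bA$ by pulling $<_\tau$ back along $h$.

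The last step is to verify condition (3). If $\mathrm{Cl}_{\bK^*}(u)$ is similar to $\mathrm{Cl}_{\bK^*}(v)$, then keeping the order below the top component but discarding it on the top component shows that the two ordered-below pairs have the same type $\tau$, so the isomorphisms $h$, $h'$ used in the construction for the blocks of $u$ and $v$ both target $(\bB_\tau,{<_{\circ,\tau}})$; then $(h')^{-1}\circ h$ is an $\bbL'$-isomorphism $\mathrm{Cl}_\bK(u)\to\mathrm{Cl}_\bK(v)$ which is order-preserving on the $\circ$-parts (by the choice of $h,h'$) and on the top components (both orders being the pullback of the \emph{same} $<_\tau$), hence an isomorphism of the ordered vertex closures, using that $<^{\bK^*}$ restricted to a vertex closure is the $\circ$-order followed by the top-component order. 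I expect the main obstacle to be precisely this engineering of condition (3): one must force a single isomorphism to respect the order both below and on the top component simultaneously, which is what makes an arbitrary choice of internal orders fail and forces the ``canonical pullback'' recipe above; the only routine point left is that the induced order on a vertex closure really does concatenate as claimed, which holds because its $\circ$-part lies entirely in blocks preceding the top block.
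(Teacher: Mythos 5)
The paper states this as an unproved observation---there is no proof environment following it---so there is nothing to compare your argument against; on its own merits, your proof is correct and complete. The finiteness claims are immediate exactly as you say, since the principal downset of $v$ under $\preceq^\bK$ is by definition the underlying set of $\mathrm{Cl}_\bK(v)$, and the passage to a linear extension of the quotient poset of closure components in order type $\omega$ is standard. You have also correctly identified that conditions (1), (2), and (4) come for free from the block decomposition and that all of the content sits in condition (3); your device of fixing representatives $(\bB_\tau, <_{\circ,\tau})$ for the isomorphism types of ``$1$-generated structure with ordered $\circ$-part'' and pulling back a fixed order $<_\tau$ on the top component is exactly the kind of canonical choice needed, and your verification that similarity forces equality of types and that $(h')^{-1}\circ h$ is then order-preserving both below and on the top component is sound (using, as you note, that the restriction of $<^{\bK^*}$ to a vertex closure is the $\circ$-part followed by the top block, which holds by condition (2) and the interval property). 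The only point worth flagging is that the well-definedness of the stage-$i$ step relies on $\mathrm{Cl}_\bK(v)$ being the same structure for every $v\in C_i$ and on $\bA^\circ$ lying entirely in earlier blocks; both are true and you implicitly use them correctly.
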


\begin{obs}
	\label{obs:extension}
	Let $\cL'\subseteq \bbL'$ be unary language.
	Let $\cK$ be a \fr class of unordered $\cL'$-structures and $\bK$ its \fr limit.
	Then every closure-respecting order-expansion $\bK^*$ satisfies the following form
	of the extension property:
	For every $v\in K$ it holds that there are infinitely many copies of
	$\mathrm{Cl}_{\bK^*}(v)$ extending $(\mathrm{Cl}_{\bK^*}(v))^\circ$.

	If $\bA^*$ is a substructure of $\bK^*$ and
	$f:\bA^*\to \bK^*$ is an injection which is an embedding on substructures generated by single vertices and
	preserves the relative order of closure-components, then $f$ is an embedding. 
\end{obs}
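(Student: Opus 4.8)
The plan is to establish the two assertions separately. The first (the extension property) I would reduce to the extension property of the \fr limit $\bK$, the freeness of the amalgamation of $\cK$, and clause (3) of ``closure-respecting''; the second (the recognition criterion) to clause (1), which lets one read off the linear order of $\bA^*$ from its closure components together with the orders inside single-vertex closures.

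For the extension property, fix $v\in K$ and put $\bB^*=\mathrm{Cl}_{\bK^*}(v)$, a finite structure ($\bK$ being locally finite), with reduct $\bB:=\bB^*|_{\cL_\bK}=\mathrm{Cl}_\bK(v)$ and $\bB^\circ:=(\bB^*)^\circ$, which is the universe of a substructure of $\bK^*$. One first records the elementary facts that $\bB=\mathrm{Cl}_\bB(v)$ and that $\mathrm{Cl}_\bB(w)\subseteq\bB^\circ$ for every $w\in\bB^\circ$. Given $n<\omega$, form the free amalgam $\bD$ of $n$ copies $\bB_1,\dots,\bB_n$ of $\bB$ over $\bB^\circ$; the closure of each vertex of $\bD$ is then a closure already occurring in some $\bB_i$ or in $\bB^\circ$, so $\bD$ has closures in $\cC$ and $\bD\in\cK$. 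By the extension property of the \fr limit there is $e\in\emb(\bD,\bK)$ with $e|_{\bB^\circ}=\mathrm{id}$; the sets $e[\bB_1],\dots,e[\bB_n]$ are then $n$ copies of $\bB$ in $\bK$ meeting pairwise exactly in $\bB^\circ$, and $e[\bB_i]=\mathrm{Cl}_\bK(v_i)$, where $v_i$ is the image under $e$ of the generator of $\bB_i$ (closures are absolute in substructures). It remains to identify the ordered structure $\bK^*$ induces on each $e[\bB_i]$. The canonical unordered isomorphism $\bB\to e[\bB_i]$ fixes $\bB^\circ$ pointwise and carries $[v]_\bB$ onto $[v_i]_{e[\bB_i]}$, so $(\mathrm{Cl}_{\bK^*}(v_i))^\circ=\bB^\circ=(\mathrm{Cl}_{\bK^*}(v))^\circ$ and that isomorphism witnesses that $\mathrm{Cl}_{\bK^*}(v)$ is \emph{similar} to $\mathrm{Cl}_{\bK^*}(v_i)$; clause (3) then upgrades it to an isomorphism $g_i$ of the \emph{ordered} structures, and since $g_i$ maps the finite linear order $\bB^\circ$ order-preservingly onto itself it is the identity there. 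Thus each $e[\bB_i]$, with the inherited order, is a copy of $\mathrm{Cl}_{\bK^*}(v)$ extending $(\mathrm{Cl}_{\bK^*}(v))^\circ$; as there are $n$ distinct such copies and $n$ was arbitrary, there are infinitely many.

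For the recognition criterion, let $\bA^*\subseteq\bK^*$ be a substructure and $f\colon A^*\to K$ an injection that is an embedding on each $\mathrm{Cl}_{\bA^*}(v)$ and preserves the relative order of closure components. Since $\bA^*$ is closed under functions, $\mathrm{Cl}_{\bA^*}(v)=\mathrm{Cl}_{\bK^*}(v)$ for $v\in A^*$, the relation $\preceq^{\bA^*}$ is the restriction of $\preceq^{\bK^*}$, and the closure components of $\bA^*$ are the traces on $A^*$ of those of $\bK^*$, hence by clause (1) are $<^{\bA^*}$-intervals and so are linearly ordered (which is what makes ``relative order of closure components'' meaningful). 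Every function symbol and unary relation symbol is respected by $f$, because $v$ and $F^{\bA^*}(v)$ lie in $\mathrm{Cl}_{\bA^*}(v)$, on which $f$ is an embedding. For the order it suffices, by linearity of $<^{\bA^*}$ and injectivity of $f$, to show $u<^{\bK^*}v\Rightarrow f(u)<^{\bK^*}f(v)$ for distinct $u,v\in A^*$: if $u,v$ are $\preceq^{\bA^*}$-comparable they lie in a common $\mathrm{Cl}_{\bA^*}(x)$ and the claim is immediate; otherwise they lie in distinct closure components, and their $f$-images lie in distinct components as well, since $f$ is injective and carries each $\mathrm{Cl}_{\bA^*}(w)$ onto $\mathrm{Cl}_{\bK^*}(f(w))$, so the hypothesis that $f$ preserves the relative order of components gives $f(u)<^{\bK^*}f(v)$. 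Hence $f$ is an embedding.

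The one step that is more than bookkeeping is the identification, in the first assertion, of the ordered structure that $\bK^*$ induces on a freshly produced copy of $\mathrm{Cl}_\bK(v)$: the unordered isomorphism need not respect $<$ on the top component $[v]_\bB$, and it is precisely clause (3) of ``closure-respecting'', together with the rigidity of finite linear orders, that forces the induced ordered structure to be a copy of $\mathrm{Cl}_{\bK^*}(v)$ over $(\mathrm{Cl}_{\bK^*}(v))^\circ$.
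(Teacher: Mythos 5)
The paper states this as an unproved \emph{Observation}, so there is no official argument to compare against; your proposal supplies a complete proof, and it is correct. It uses exactly the ingredients the surrounding text makes available: for the first assertion, the fact (noted just before Definition of closure-respecting expansions) that an amalgamation class in a purely unary functional/relational language has free amalgamation, so the $n$-fold free amalgam of $\mathrm{Cl}_\bK(v)$ over $(\mathrm{Cl}_\bK(v))^\circ$ lies in $\cK$ and can be realized over $(\mathrm{Cl}_{\bK^*}(v))^\circ$ by the extension property of the limit; for the second, clause (1) of ``closure-respecting'' together with the hypothesis on single-vertex closures. You also correctly isolate the only genuinely delicate point: the freshly produced unordered copies of $\mathrm{Cl}_\bK(v_i)$ are a priori only \emph{similar} to $\mathrm{Cl}_{\bK^*}(v)$, and it is clause (3) plus the rigidity of finite linear orders that upgrades similarity to an isomorphism fixing the common $\circ$-part, which is what ``copy extending $(\mathrm{Cl}_{\bK^*}(v))^\circ$'' requires. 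The bookkeeping facts you rely on (embeddings carry $\mathrm{Cl}_{\bA^*}(w)$ onto $\mathrm{Cl}_{\bK^*}(f(w))$; closure components of a substructure coincide with those of $\bK^*$; distinct components have distinct, interval-ordered images) are all verified correctly, so the case split comparable/incomparable in the second part covers everything.
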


\begin{prop}
	\label{prop:recurence}
	Let $\cL'\subseteq \bbL'$ be unary language.
	Let $\cK$ be a \fr class of unordered $\cL'$-structures and $\bK$ its \fr limit.
	Then every closure-respecting expansion $\bK^*$ is recurrent.
\end{prop}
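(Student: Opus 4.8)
The plan is to verify recurrence directly from its definition in Definition~\ref{Def:Props_of_expansions}: given $\eta\in\emb(\bK)$, I will construct $\theta\in\emb(\bK)$ with $\eta\circ\theta\in\emb(\bK^*)$, building $\theta$ one closure component at a time along the order-type-$\omega$ enumeration of $\bK$ provided by $<^{\bK^*}$. Recall that $\bK$ is locally finite, so each closure component of $\bK$ is finite; by items (1) and (4) of the definition of closure-respecting, the closure components of $\bK$ can be listed as $C_0,C_1,\dots$, where each $C_n$ is a finite interval of $<^{\bK^*}$ lying entirely $<^{\bK^*}$-above $C_0\cup\cdots\cup C_{n-1}$. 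For each $n$ fix a generator $v$ of $C_n$; then $\mathrm{Cl}_\bK(v')=\mathrm{Cl}_\bK(v)$ as substructures of $\bK$ for every $v'\in C_n$, so write $\bA_n:=\mathrm{Cl}_\bK(v)$, a finite $1$-generated structure with $C_n$ as its top closure component. Since the elements of $\bA_n^\circ$ are strictly $\preceq^\bK$-below $v$, item (2) gives $\bA_n^\circ\subseteq C_0\cup\cdots\cup C_{n-1}$. Write $(\bA_n,<_{\bA_n})$ for the $\bK^*$-substructure of $\bK^*$ on $A_n$ (that is, for $\mathrm{Cl}_{\bK^*}(v)$); by item (2), $\bA_n^\circ$ lies $<_{\bA_n}$-below $C_n$.

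I would build $\theta$ by recursion on $n$, maintaining that $\theta|_{C_0\cup\cdots\cup C_{n-1}}$ embeds the $\bbL'$-reducts into $\bK$ and that $(\eta\circ\theta)|_{C_0\cup\cdots\cup C_{n-1}}$ embeds $\bK^*$-structures into $\bK^*$. At stage $n$ let $e_0:=\theta|_{\bA_n^\circ}$, which is already defined, and which by the inductive hypothesis is such that $\eta\circ e_0$ is an embedding of $\bK^*$-structures (in particular order-preserving on $\bA_n^\circ$). Since $\cK$ is a free amalgamation class, for each $k$ the free amalgam over $\bA_n^\circ$ of $k$ copies of $\bA_n$ lies in $\cK$, and embedding it into $\bK$ so as to extend $e_0$ (possible since $\bK$ is the \fr limit of $\cK$) yields $k$ extensions of $e_0$ to embeddings $\bA_n\to\bK$ with pairwise disjoint images of $C_n$. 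As $<^{\bK^*}$ has order type $\omega$, the set $\{x\in K:x\leq^{\bK^*}\max\nolimits_{<^{\bK^*}}(\eta\circ\theta)[C_0\cup\cdots\cup C_{n-1}]\}$ is finite, so taking $k$ large enough I may fix such an extension $e'\colon\bA_n\to\bK$ with $(\eta\circ e')(C_n)$ lying entirely $<^{\bK^*}$-above that set. Now $\mathrm{Cl}_{\bK^*}(\eta(e'(v)))$ is a closure component of $\bK^*$ of unordered type $\bA_n$, and the $\bbL'$-isomorphism $\eta\circ e'\colon\bA_n\to\mathrm{Cl}_\bK(\eta(e'(v)))$ restricts to $\eta\circ e_0$ on $\bA_n^\circ$ and so is order-preserving there; hence $\mathrm{Cl}_{\bK^*}(v)$ and $\mathrm{Cl}_{\bK^*}(\eta(e'(v)))$ are similar, and item (3) supplies an isomorphism $h\colon(\bA_n,<_{\bA_n})\to\mathrm{Cl}_{\bK^*}(\eta(e'(v)))$ of $\bK^*$-structures. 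Since $h$ and $\eta\circ e_0$ are both order-isomorphisms of $\bA_n^\circ$ onto the $\circ$-part of $\mathrm{Cl}_{\bK^*}(\eta(e'(v)))$ and automorphisms of finite linear orders are trivial, $h$ must agree with $\eta\circ e_0$ on $\bA_n^\circ$. I then put $\theta|_{C_n}:=(\eta^{-1}\circ h)|_{C_n}$ (legitimate since $\mathrm{Cl}_\bK(\eta(e'(v)))\subseteq\im(\eta)$), so $\eta\circ\theta$ agrees with $h$ on $C_n$. The inductive hypothesis then persists: the relations are unary and the function values of elements of $C_n$ stay inside $\bA_n$, giving an $\bbL'$-embedding; injectivity holds because $(\eta\circ\theta)(C_n)$ is placed above the image of $C_0\cup\cdots\cup C_{n-1}$; and $\eta\circ\theta$ is order-preserving on $C_0\cup\cdots\cup C_n$, since it equals the order-isomorphism $h$ on $C_n$, puts $C_n$ above the earlier image, and has comparisons between $\bA_n^\circ$ and $C_n$ governed by item (2). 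Finally $\theta:=\bigcup_n\theta|_{C_n}$ lies in $\emb(\bK)$ and $\eta\circ\theta$ in $\emb(\bK^*)$, because every finite tuple lies in some $C_0\cup\cdots\cup C_n$.

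The step I expect to be the real obstacle is making the construction order-correct \emph{within} each closure component: an arbitrary $\bbL'$-embedding of $\bA_n$ into $\bK$ extending $e_0$ is fine as a map of $\bbL'$-structures but may realize a non-order-preserving automorphism of $(\bA_n,<_{\bA_n})$ on the top component $C_n$. Item (3) of the definition of closure-respecting is exactly what resolves this — it lets me prescribe the correct isomorphism $h$ on the image and then pull $\theta$ back through $\eta$. The remaining delicate point, driving the image of each successive component upward in $<^{\bK^*}$, is handled by free amalgamation of $\cK$ (which produces enough pairwise disjoint extensions) together with the order-type-$\omega$ hypothesis.
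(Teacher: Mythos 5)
Your proof is correct and follows essentially the same route as the paper's (very terse) argument: enumerate the closure components in $<^{\bK^*}$-order and extend $\theta$ component by component, pushing each new image upward. The only difference is one of packaging — where the paper simply cites Observation~\ref{obs:extension}, you re-derive its content inline (the supply of high-up copies of $\mathrm{Cl}_{\bK^*}(v)$ over $(\mathrm{Cl}_{\bK^*}(v))^\circ$ via free amalgamation plus item (3) of the closure-respecting definition, and the check that an order-of-components-preserving, componentwise-correct injection is an embedding).
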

\begin{proof}
	Enumerate closure-components of $\bK^*$ as $C_1,C_2,\ldots$ in the order given by
	$(K,<^{\bK^*})$.  Given $\eta\in \emb(\bK)$, an embedding $\theta\in \emb(\bK)$ with $\eta\circ \theta\in \emb(\bK^*)$ can be constructed by induction using Observation~\ref{obs:extension}.
\end{proof}
\begin{theorem}
\label{thm:Unary}
Let $\cL'\subseteq \bbL'$ be unary language.
Let $\cK$ be a \fr class of unordered $\cL'$-structures and $\bK$ its \fr limit.
Then every closure-respecting expansion $\bK^*$ of $\bK$ is a big Ramsey structure.
\end{theorem}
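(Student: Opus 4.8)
The plan is to deduce the statement from Corollary~\ref{Cor:Recurrent_BRS_IRT}: it suffices to check that $\bK^*/\bK$ is unavoidable, that it is precompact, and that $\bK^*$ satisfies IRT. The first two are quick. By the preceding Proposition~\ref{prop:recurence} (closure-respecting expansions are recurrent) together with the fact that recurrent expansions are unavoidable (Definition~\ref{Def:Props_of_expansions}), $\bK^*/\bK$ is unavoidable. For precompactness, fix a finite $\bA\in\age(\bK)$; every member of $\bK^*(\bA,\bK)$ is $\bA$ together with some linear order on the finite set $A$, so $|\bK^*(\bA,\bK)|\leq |A|!<\omega$. (Equivalently, once IRT for $\bK^*$ is in hand one may instead invoke Corollary~\ref{Cor:Precompact_exp_BRDs} and recurrence to get $\rm{BRD}(\bA,\bK)=\sum_{\bA^*\in\bK^*(\bA,\bK)}\rm{BRD}(\bA^*,\bK^*)=|\bK^*(\bA,\bK)|$, which is precisely the BRS condition; either route finishes the proof.) So the entire content is to show that $\bK^*$ satisfies IRT, i.e.\ that $\rm{BRD}(\bA^*,\bK^*)=1$ for every finite $\bA^*\leq\bK^*$.

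For that, I would argue as follows. Since $<^{\bK^*}$ has order type $\omega$ and the closure components of $\bK^*$ are finite intervals, enumerate them as $C_0<^{\bK^*}C_1<^{\bK^*}\cdots$. Fix a finite $\bA^*\leq\bK^*$; its induced closure components are finite intervals $D_1<^{\bA^*}\cdots<^{\bA^*}D_\ell$, and for a generator $v_i$ of $D_i$ the structure $\mathrm{Cl}_{\bA^*}(v_i)$ is a fixed finite $1$-generated structure with a fixed ``$\circ$''-part $(\mathrm{Cl}_{\bA^*}(v_i))^\circ$ built from the lower $D_j$. By Observation~\ref{obs:extension}, a map $f\colon A^*\to K$ is an embedding $\bA^*\to\bK^*$ exactly when it is injective, an embedding on each $\mathrm{Cl}_{\bA^*}(v_i)$, and preserves the relative order of closure components; and since $\mathrm{Cl}_{\bK^*}(f(v_i))\cong\mathrm{Cl}_{\bA^*}(v_i)$ is forced, only closure components of $\bK^*$ of one of finitely many ``relevant'' isomorphism types, each of bounded size, can receive the points of any $D_i$. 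Hence $\emb(\bA^*,\bK^*)$ is parametrized by order-preserving selections of an $\ell$-tuple of relevant closure components of $\bK^*$ that respect the hook-up pattern dictated by the $\circ$-parts, together with one of boundedly many local placements inside the selected components. Given a finite colouring $\gamma$ of $\emb(\bA^*,\bK^*)$, I would colour these selections by the corresponding $\gamma$-value and extract a monochromatic sub-configuration by an iterated application of the (finite, then infinite) Ramsey theorem to the countable family of relevant closure components; the monochromatic selection, together with completions of all the other required closure components, is then assembled into a single $\eta\in\emb(\bK^*)$ with $\gamma$ constant on $\eta\circ\emb(\bA^*,\bK^*)$, giving $\rm{BRD}(\bA^*,\bK^*)=1$.

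The main obstacle, and the place where the extension property is indispensable, is twofold. First, an embedding of $\bA^*$ is not merely an increasing $\ell$-tuple of closure components but one constrained by how the $\circ$-parts of the various $\mathrm{Cl}_{\bA^*}(v_i)$ overlap, so the Ramsey extraction must be organized on the bounded-width forest of closure components of $\bK^*$ ordered by ``lies in the $\circ$-part of,'' rather than on a linearly ordered index set; one applies Ramsey level by level of this forest. Second, the monochromatic selection must remain completable to a full copy of $\bK^*$ inside $\bK^*$, which requires that around the chosen components there still be infinitely many closure components of every required isomorphism type extending any prescribed $\circ$-part — exactly the extension property recorded in Observation~\ref{obs:extension}, which simultaneously guarantees that the forest of closure components branches infinitely and that the completion step goes through. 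Carrying out this bookkeeping carefully is the only real work; everything else is an invocation of the cited results. I note that an alternative for the IRT step, closer in spirit to Sections~\ref{Sec:Milliken}--\ref{Sec:Coding_Trees}, is to code $\bK^*$ into a regularly branching tree (recording each vertex together with a description of its closure) and invoke Theorems~\ref{Thm:Milliken}, \ref{Thm:Milliken_Envelopes}, and \ref{Thm:Weak_Biembeddability}, paralleling Examples~\ref{Exa:Rationals} and \ref{Exa:Rado}; since the language is unary the coding is routine, but the direct argument above avoids introducing the tree.
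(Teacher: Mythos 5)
Your overall skeleton matches the paper's: Proposition~\ref{prop:recurence} gives recurrence (hence unavoidability), precompactness is immediate because only the order $<$ is added, and everything reduces to showing that $\bK^*$ satisfies IRT. For the IRT step the paper's route differs in packaging though not in combinatorial substance: it constructs a weak bi-embedding $(\bM^{\cC},\phi,\psi)$ for $\bK^*$, where $\bM=\bK^*|_{\bbL}$ is just $(\omega,<)$ and $\cC$ is the set of allowed closures of $\bK^*$, with $\phi(v)=(\bA_v,e_v)$ recording the pointed closure of $v$ and $\psi$ built by induction on closure components using Observation~\ref{obs:extension}; it then applies Proposition~\ref{Prop:Unary} with $\cA=\{\bA|_\bbL\}$ together with the classical Ramsey theorem to get $\rm{BRD}(\bA,\bM^{\cC})=1$, and transfers this back through Theorem~\ref{Thm:Weak_Biembeddability}. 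Your direct argument is essentially an unwinding of that machinery inside $\bK^*$ itself. One thing the paper's organization buys is that Proposition~\ref{Prop:Unary} runs the Ramsey extraction over finite subsets of $\omega$ (the union of the vertex sets of the selected components), coloring each such set by the entire finite tuple of colors of embeddings projecting onto it; this sidesteps the ``level by level over the forest of components'' bookkeeping you describe, which is where your version would require the most care.

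There is one step you must tighten. You parametrize $\emb(\bA^*,\bK^*)$ by component selections ``together with one of boundedly many local placements,'' but then color \emph{selections} by ``the'' corresponding $\gamma$-value and conclude $\rm{BRD}(\bA^*,\bK^*)=1$. Boundedly many placements per selection would only yield a bounded big Ramsey degree; for degree one you need the placement to be \emph{unique}, i.e.\ that an embedding of $\bA^*$ is completely determined by the tuple of closure components it hits. This is true, but for a reason you should make explicit: an embedding $f$ restricted to $\mathrm{Cl}_{\bA^*}(v)$ is a pointed isomorphism onto $\mathrm{Cl}_{\bK^*}(f(v))$, and two vertices $u\neq u'$ of the same closure component of $\bK^*$ with pointed-isomorphic closures would produce a nontrivial automorphism of the finite \emph{linearly ordered} structure $\mathrm{Cl}_{\bK^*}(u)=\mathrm{Cl}_{\bK^*}(u')$, which cannot exist. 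This rigidity is exactly where the linear order of the closure-respecting expansion earns its keep, and it is what makes the count $\left\vert\left\{f\in \emb(\bA,\bB^{\cC}):\pi[f[A]]=B\right\}\right\vert$ equal to $1$ in the final step of the paper's proof. Without it your Ramsey extraction only homogenizes the tuple of colors attached to each selection, not the coloring itself.
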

\begin{proof}
	Fix $\cK$, $\bK$, and its closure-respecting expansion $\bK^*$.
	Without loss of generality assume that $\cL'\subseteq \bbL'\setminus
	\bbL$, $K=\omega$ and $<^{\bK^*}$ is the linear order (thus the language of $\bK^*$ is $\cL'\cup \{<\}\subseteq \bbL'$). 
	In particular we get that $\bM=\bK^*|_{\bbL}$ is $\omega$ with the natural ordering.

	By Proposition~\ref{prop:recurence} we only need to check that $\bK^*$ satisfies IRT which we will do using Theorem~\ref{Thm:Weak_Biembeddability}
	and Proposition~\ref{Prop:Unary}.
	Let $\cC$ be an inclussion minimal set of allowed closures so $\bK^*$ has closures in $\cC$.
	We construct a weak bi-embedding $(\bM^{\cC},\phi,\psi)$ for $\bK^*$ and show that $\bM^{\cC}$ satisfies IRT.

	First we describe the embedding $\phi\in \emb(\bK^*,\bM^{\cC})$. For every $v\in \bK^*$ we put $\phi(v)=(\bA_v,e_v)$ where $\bA_v$ is the unique structure in $\cC$ describing $\mathrm{Cl}_{\bK^*}(v)$ and $e_v$ is the (unique) isomorphism $e_v:\bA_v\to \mathrm{Cl}_{\bK^*}(v)$ with $e_v(0)=v$. Notice that $e_v$ is an embedding of the linear order $\bA|_\bbL\to \bM$, and by the construction of $\bM^{\cC}$, it follows that $\phi$ is an embedding $\bK^*\to \bM^{\cC}$.

	Next we describe $\psi\in \emb_{\bK^*}(\bM^\cC, \bK^*)$.
	By the construction of $\bM^{\cC}$, for every $v\in \bM$, there are only finitely many closure-components of $\bM^{\cC}$ with $\max \pi(C_j)=v$. We can thus fix an enumeration of the closure-components of $\bM^{\cC}$ as $C_1,C_2,\ldots$ such that for every $i<j$, we have $\max \pi(C_i)\leq \max \pi(C_j)$.
	By repeated application of the extension property given by Observation~\ref{obs:extension},
	construct a $\cK$-approximate embedding $\psi$ by induction on this enumeration so that $\psi$ is an embedding $\bM^{\cC}|_{\cL}\to \bK$ and such that every closure-component forms an interval, with  the relative order of these intervals preserved.  By the second part of Observation~\ref{obs:extension}, this yields a weak bi-embedding.

	It remains to verify that $\bM^C$ satisfies IRT.
	By the Ramsey theorem applied for $\bA|_\bbL$ (which is just finite linear order of size $|A|$) and $\bM=\omega$ we obtain that $\rm{BRD}(\bA|_\bbL,\bM)=1$.
	By Proposition~\ref{Prop:Unary} for $\bA$ and $\cA=\{\bA|_\bbL\}$ we obtain that $\rm{BRD}(\bA,\bM^{\cC})=1$.
	Using the weak bi-embedding and Theorem~\ref{Thm:Weak_Biembeddability} we get IRT for $\bK^*$.
\end{proof}

Examples of structures which can be interpreted in a language with relational and unary function symbols include:
\begin{enumerate}
	\item $\Lambda$-ultrametric spaces (with small Ramsey properties studied in \cite{braunfeld2017ramsey}),
	\item Graphs which admits a $k$-orientation, i.e.\ an orientation of edges with out-degree at most $k$. The small Ramsey properties of these classes are studied in \cite{evans2019automorphism}.
\end{enumerate}
Additional examples are indicated in the following section.

\section{Catalog of known big Ramsey structures}
The following catalog gives what we believe to be a complete list of big Ramsey structures described to the date (or which directly follow from the framework developed in this paper). We also include a (partial) list of examples of structures with known upper bounds on big Ramsey degrees for which the exact big Ramsey degrees and/or the big Ramsey structures can hopefully be identified in future.

\def\pfR{\textbf R}
\def\pfM{\textbf M}
\def\pfPM{\textbf {PM}}
\def\pfCT{\textbf {CT}}
\def\pfCS{\textbf {CS}}
\def\pfS{\textbf S}
\def\pfC{\textbf C}
\def\pfU{\textbf 1}
We indicate the basic proof technique:
\begin{enumerate}
	\item[\pfR]  Ramsey theorem,
  \item[\pfM]  Milliken tree theorem,
  \item[\pfPM] product form Milliken tree theorem with unbunded branching,
  \item[\pfCT] coding trees with proof based on language of set-theoretic forcing theorem,
  \item[\pfCS] Carlson-Simpson theorem,
  \item[\pfS] Ramsey theorem for trees with sucessor operation,
  \item[\pfC] custom Ramsey argument,
  \item[\pfU{}] indicates that the underlying Ramsey theorem is used in combination with adding unary relations and functions as discussed in Section~\ref{Sec:Functions}.
\end{enumerate}
In the table some cases are generalizations of others. We list results only in the most general case they cover.
$k$ is used to denote an arbitrary fixed integer.

\def\wname{0.22\textwidth}
\def\wspecial{0.16\textwidth}
\def\wfiniteBRD{0.16\textwidth}
\def\wBRD{0.16\textwidth}
\def\wBRS{0.16\textwidth}

\def\ename#1{\parbox{\wname}{\raggedright\vskip1mm #1 \vskip1mm}}
\def\especial#1{&\parbox{\wspecial}{\raggedright\vskip1mm #1 \vskip1mm}}
\def\efiniteBRD#1{&\parbox{\wfiniteBRD}{\raggedright\vskip1mm #1 \vskip1mm}}
\def\eBRD#1{&\parbox{\wBRD}{\raggedright\vskip1mm #1 \vskip1mm}}
\def\eBRS#1{&\parbox{\wBRD}{\raggedright\vskip1mm #1 \vskip1mm}\\\hline}
{
\small
\begin{longtable}[t]{|p{\wname}|p{\wspecial}|p{\wfiniteBRD}|p{\wBRD}|p{\wBRS}|}
	\hline 
	\ename{\textbf{structure}}
	 \especial{\textbf {special cases}}
	 \efiniteBRD{\textbf{finite BRDs}}
	 \eBRD{\textbf{precise BRDs}}
	 \eBRS{\textbf{big\\ Ramsey structure} }

	\ename{$(\omega,<)$}
	 \especial{pigeonhole}
	 \efiniteBRD{Ramsey thm.~\cite{Ramsey} \pfR}
	 \eBRD{Ramsey thm.~\cite{Ramsey} \pfR}
	 \eBRS{trivial}

	\ename{$(\mathbb Q,<)$}
	 \especial{vertices (easy);\\pairs~\cite{Galvin}}
	 \efiniteBRD{Laver \pfC\\(unpublished, see \cite{Stevo_book})}
	 \eBRD{\cite{Devlin} \pfM}
	 \eBRS{\cite{Zucker_BR_Dynamics}\\ Ex.~\ref{Exa:Rationals} \pfM}

	\ename{Rado graph}
	 \especial{vertices (folklore);\\edges~\cite{PS_1996}}
	 \efiniteBRD{\cite{Sauer_2006}~\pfM}
	 \eBRD{\cite{Laflamme_S_V}~\pfM}
	 \eBRS{\cite{Zucker_BR_Dynamics}\\Ex.~\ref{Exa:Rado} \pfM}

%

	\ename{Unrestricted; finite binary language\footnote{Special cases include the generic tournament and the generic directed graph}}
	  \especial{vertices (folklore)}
	  \efiniteBRD{\cite{Laflamme_S_V,dobrinen2016rainbow}~\pfM}
	  \eBRD{ \cite{Laflamme_S_V,dobrinen2016rainbow}~\pfM}
	  \eBRS{\cite{Zucker_BR_Dynamics}~\pfM}

	\ename{Ultrametric spaces}
	  \especial{vertices  \cite{delhomme2008indivisible}}
	  \efiniteBRD{\cite{NVT2008} \pfR; Ex.~\ref{exa:ultrametric} \pfR\pfU}
	  \eBRD{\cite{NVT2008} \pfR; Thm.~\ref{thm:Unary}~\pfR\pfU}
	  \eBRS{\cite{NVT2008} (implicit); Thm.~\ref{thm:Unary}~\pfR\pfU}

	\ename{Dense local order $\bS(2)$}
	  \especial{}
	  \efiniteBRD{\cite{L_NVT_S_2010} \pfC;\\ Ex.~\ref{exa:S2} \pfM\pfU}
	  \eBRD{\cite{L_NVT_S_2010} \pfC;\\ \cite{CDP_SDAP2} \pfCT}
	  \eBRS{\cite{Zucker_BR_Dynamics} \pfC\\}

	\ename{$\bS(k)$, $k>2$}
	  \especial{}
	  \efiniteBRD{\cite{DB} \pfC; Ex.~\ref{exa:S2} \pfM\pfU}
	  \eBRD{\cite{DB} \pfC}
	  \eBRS{\cite{DB} (implicit) \pfC}

	\ename{$(\mathbb Q,<)$ partitioned to $n$ dense parts}
	  \especial{}
	  \efiniteBRD{\cite{L_NVT_S_2010} \pfC;\\ Ex.~\ref{exa:S2} \pfM\pfU}
	  \eBRD{\cite{L_NVT_S_2010} \pfC;\\ \cite{CDP_SDAP2} \pfCT}
	  \eBRS{\cite{Zucker_BR_Dynamics} \pfC}

	\ename{\fr limit of convexly ordered eqivalences with $k$ classes}
	  \especial{}
	  \efiniteBRD{\cite{CDP_SDAP2} \pfCT; Ex.~\ref{exa:S2} \pfM\pfU}
	  \eBRD{\cite{CDP_SDAP2} \pfCT}
	  \eBRS{\cite{CDP_SDAP2} (implicit)}

	\ename{$\mathbb Q_\mathbb Q$, $\mathbb Q_{\mathbb Q_\mathbb Q}$, \ldots}
	  \especial{}
	  \efiniteBRD{\cite{CDP_SDAP2} \pfCT}
	  \eBRD{\cite{CDP_SDAP2} \pfCT}
	  \eBRS{\cite{CDP_SDAP2} (implicit)}

	\ename{Universal $k$-partite graph}
	  \especial{}
	  \efiniteBRD{\cite{Zucker_BR_Upper_Bound,CDP_SDAP2}\footnote{Finite big Ramsey degrees for bipartite graphs was also announced by John Howe} \pfCT; Ex. \ref{exa:rado} \pfM\pfU}
	  \eBRD{\cite{BCDHKVZ,CDP_SDAP2} \pfCT; Ex. \ref{exa:rado} \pfM\pfU}
	  \eBRS{\cite{CDP_SDAP2} \pfCT}
	\ename{Ordered universal $k$-partite graph}
	  \especial{}
	  \efiniteBRD{\cite{CDP_SDAP2} \pfCT}
	  \eBRD{\cite{CDP_SDAP2} \pfCT}
	  \eBRS{\cite{CDP_SDAP2} \pfCT}

	\ename{Universal ($\omega$-partite) graph
	with labeled partitions}
	  \especial{}
	  \efiniteBRD{\cite{BCdRHKK} \pfPM;\\ Ex. \ref{exa:rado}}
	  \eBRD{}
	  \eBRS{See discussion below}

	\ename{Structures in unary languages (possibly with functions)}
	 \especial{}
	 \efiniteBRD{\cite{hubivcka2019all,Evans3}~\pfR\pfU}
	 \eBRD{Thm.~\ref{thm:Unary}~\pfR\pfU}
	 \eBRS{Thm.~\ref{thm:Unary}~\pfR\pfU}

	\ename{Universal triangle-free graph}
	  \especial{vertices \cite{komjath1986,Elzahar1989};\\ Edges \cite{sauer1998}}
	  \efiniteBRD{\cite{Dobrinen_3Free} \pfCT;\\\cite{Hubicka2020CS} \pfCS}
	  \eBRD{\cite{BCDHKVZ,dobrinen2020ramsey} \pfCT;\\ \cite{BCDHKVZ_poset} \pfC}
	  \eBRS{\cite{BCDHKVZ,BCDHKVZ_poset,DZ} \pfCT}

	\ename{Universal $K_k$-free (Henson) graph}
	  \especial{vertices \cite{komjath1986,Elzahar1989}}
	  \efiniteBRD{\cite{Dobrinen_Henson,DZ} \pfCT}
	  \eBRD{\cite{BCDHKVZ,DZ,Vodsedalek2025} \pfCT}
	  \eBRS{\cite{BCDHKVZ,DZ} \pfCT}

		\ename{\fr limit of a finitely constrained free amalgamation class in a binary language}
	  \especial{vertices\\\cite{sauer2003canonical,el1993divisibility}\\(special cases)}
	  \efiniteBRD{\cite{BCDHKVZ,DZ} \pfCT;\\\cite{Balko2023Sucessor} \pfS}
	  \eBRD{\cite{BCDHKVZ,DZ} \pfCT}
	  \eBRS{\cite{BCDHKVZ,DZ} \pfCT}


	\ename{Universal poset}
	  \especial{}
	  \efiniteBRD{\cite{Hubicka2020CS}~\pfCS}
	  \eBRD{\cite{BCDHKVZ_poset}~\pfC{} or \pfS}
	  \eBRS{\cite{BCDHKVZ_poset}~\pfC{} or \pfS}

	\ename{free superpositions of multiple linear orders}
	  \especial{}
	  \efiniteBRD{\cite{Hubicka2020CS} \pfCS;\\
      Ex. \ref{exa:2orders} \pfM
      \\ Ex. \ref{exa:permutation} \pfM \pfU}
	  \eBRD{Ex. \ref{exa:2orders} \pfM}
	  \eBRS{Ex. \ref{exa:2orders} \pfM}

	\ename{Metric spaces definable in a free superposition of ultrametrics and random graphs}
	  \especial{vertices (easy)}
	  \efiniteBRD{\cite{Masulovic_2020} \pfC;\\ \cite{balko2021big} \pfCS}
	  \eBRD{?}
	  \eBRS{?}

	\ename{Metric spaces\\ with distances \{0,1,\ldots, k\}}
	  \especial{vertices \cite{delhomme2008indivisible}}
	  \efiniteBRD{\cite{Hubicka2020CS,balko2021big} \pfCS}
	  \eBRD{?}
	  \eBRS{?}

	\ename{Universal 3-uniform hypergraph}
	  \especial{vertices \cite{el1994divisibility}}
	  \efiniteBRD{\cite{BCHKV_3Unif} \pfPM}
	  \eBRD{?}
	  \eBRS{?}

	\ename{Unrestricted structures in injective
	languages having, $\forall n>1$, finitely many relations of arity $n$}
	  \especial{}
	  \efiniteBRD{\cite{BCdRHKK} \pfPM}
	  \eBRD{?}
	  \eBRS{?}
\end{longtable}
}

As promised in the table, we briefly discuss the case of a Rado graph with a generic partition into countably many labeled pieces (the case of the $\omega$-partite Rado graph is similar). A big Ramsey structure can be formed by generalizing the notion of an LSV tree. In the case of a partition into finitely many labeled pieces, this is straightforward; one modifies the definition of an LSV coding tree to be a subtree of $n\times 2^{<\omega}$, where $n$ is the number of pieces of the partition, but where each level has exactly one splitting node or exactly one coding node as before. For a partition into countably many pieces, it is more complicated. Localizing to any finite number of pieces, we obtain something resembling an LSV coding subtree of $n\times 2^{<\omega}$, but globally, the set of ``interesting levels,'' i.e.\ levels which contain a splitting node or a coding node, is no longer ordered in order type $\omega$. It is open if the Rado graph with a countable labeled partition admits a recurrent big Ramsey structure.  

It is a relatively direct consequence of the product Ramsey theorem that given
two Ramsey classes $\cK$ and $\cK'$, the age of the structure created as a free
superposition of their respective \fr limits is also Ramsey, see
e.g.~\cite{Bodirsky2015,hubicka2025twenty}. While the proof of this property does not naturally generalize to big
Ramsey degrees, in special cases such results can be established.  Notice that
the Ramsey theorem \textbf{R} is implied by the Milliken tree theorem (\textbf{M} or \pfPM)
as well as by the Carlson-Simpson theorem \pfCS.  The Milliken tree theorem \textbf{M},
certain cases of \textbf{PM} (as used in the proofs of upper bounds on big Ramsey degrees), and the
Carlson-Simpson theorem \textbf{CS} are all implied by the Ramsey theorem for trees with
successor operation \textbf{S}, which is well-behaved for products.  It
follows that upper bounds on big Ramsey degrees of structures proved by those
theorems can be combined to give upper bounds on big Ramsey degrees of structures
created as free superpositions of those.
This yields additional examples of structures with finite big Ramsey degrees,
such as the $\omega$-ordered variants of structures above discussed in~\cite{Masulovic_2020}.
Curiously enough, the precise big Ramsey degrees does not translate so directly, and additional analysis
is required to understand the exact big Ramsey degrees of a free superposition even if the big Ramsey
degrees of the original structures are fully characterised.

\section{Negative results}
The classification programme of Ramsey
classes~\cite{Nevsetril2005,hubicka2025twenty,hubivcka2019all} provides a rich
source of \fr limits which are good candidates for structures with bounded big
Ramsey degrees. In addition to the positive examples of big Ramsey structures discussed here, there are multiple techniques which can be used to \emph{disprove}
finiteness of big Ramsey degrees.  
We quickly note a few negative results on big Ramsey degrees in the literature, which
can be broadly divided into the following types:
\begin{enumerate}
	\item counting number of oscillations of monotone functions assigned to sub-objects~\cite{ChEW_pseudotree, Todorcevic_Roscillate, Bartosova2025},
	\item study of the partial order of ages (ranks or orbits) of vertices~\cite{sauer2003canonical}, 
	\item arguments based on the distance and diameter in metric spaces~\cite{Laflamme2006}.
	\item arguments based on compressing tree of types that is infinitely branching on infinitely many levels~\cite{Omegalabelled2025}.
\end{enumerate}
In all known negative results, something stronger is proven: Given an infinite structure $\bK$ and some $\bA\in \age(\bK)$ with $\rm{BRD}(\bA, \bK) = \infty$, one typically proves this by constructing an unavoidable coloring $\gamma\colon \emb(\bA, \bK)\to \omega$. In principle, one should only need to construct unavoidable colorings $\gamma\colon \emb(\bA, \bK)\to n$ for every $n< \omega$. It would be interesting to find an example with $\rm{BRD}(\bA, \bK) = \infty$, but where there is no unavoidable $\omega$-coloring. This question is an instance of a much more general question: For infinite structures which do not have finite big Ramsey degrees, what (if anything) is the correct analogue of a big Ramsey structure?

\vspace{2 mm}

\noindent
\textbf{Acknowledgements:} We would like to thank Natasha Dobrinen and Mat\v{e}j Kone\v{c}\-\allowbreak n\'y for useful comments on an early draft and for several useful discussions.

\noindent
\textbf{Funding:} J.H. was supported by the project 21-10775S of  the  Czech  Science Foundation (GA\v CR) and in later stages by the
European Research Council (ERC Synergy Grant 810115 Dynasnet).  A.Z.\ was supported by NSERC grants RGPIN-2023-03269 and DGECR-2023-00412.

\bibliographystyle{amsplain}
\bibliography{bibzucker}

\end{document}